\newtheorem{thm}{Theorem}[section]
\newtheorem{definition}[thm]{Definition}
\newtheorem{example}[thm]{Example}
\newtheorem{problem}[thm]{Problem}
\newtheorem{assumption}[thm]{Assumption}
\newtheorem{theorem}[thm]{Theorem}
\newtheorem{lemma}[thm]{Lemma}
\newtheorem{corollary}[thm]{Corollary}
\newtheorem{proposition}[thm]{Proposition}
\newtheorem{remark}[thm]{Remark}
\def\R{\mathbb{R}}
\def\N{\mathbb{N}}
\def\KK{{\cal K}}
\def\KL{{\cal KL}}
\def\UU{{\cal U}}
\def\wmu{{\tilde{\mu}}}
\def\wV{\widetilde{V}}
\begin{document}

\title{Analysis of unconstrained nonlinear MPC schemes with time varying control horizon\footnote{This work was supported by the DFG priority program 1305, Grant Gr1569/12-1.}}

\author{Lars Gr\"{u}ne\footnote{Lars Gr\"{u}ne, J\"{u}rgen Pannek, Martin Seehafer and Karl Worthmann are with the Mathematical Institute, University of Bayreuth, 95440 Bayreuth, Germany. 
	\newline {\tt lars.gruene@uni-bayreuth.de}, {\tt juergen.pannek@uni-bayreuth.de},
	\newline {\tt martin.seehafer@uni-bayreuth.de}, {\tt karl.worthmann@uni-bayreuth.de}}
	\and J\"{u}rgen Pannek \and Martin Seehafer \and Karl Worthmann}

\date{June 2010}

\maketitle

\begin{abstract}
For discrete time nonlinear systems
satisfying an exponential or finite time controllability assumption,
we present an analytical formula for a suboptimality estimate for
model predictive control schemes without stabilizing
terminal constraints. Based on our formula, we perform a detailed
analysis of the impact of the optimization horizon and the possibly
time varying control horizon on stability and performance of the
closed loop. 

\medskip

Key Words: nonlinear model predictive control, suboptimality, stability,
controllability, networked control systems
\end{abstract}

\section{Introduction}
The stability and performance analysis of model predictive control (MPC) schemes has attracted considerable attention during the last years. MPC relies on an iterative online solution of finite horizon optimal control problems in order to deal with an optimal control problem on an infinite horizon. To this end, a performance criterion --- often the distance to some desired reference --- is optimized over the predicted trajectories of the system. This method is particularly attractive due to its ability to explicitly incorporate constraints in the controller design. Due to the rapid development of efficient optimization algorithms MPC becomes increasingly applicable also to nonlinear and large scale systems. 

Two central questions in the analysis of MPC schemes are asymptotic stability, i.e., whether the closed loop system trajectories converge to the reference and stay close to it, and closed loop performance of the MPC controlled system. In particular -- since desired performance specifications (like, e.g., minimizing energy or maximizing the output in a chemical process) can be explicitly included in the optimization objective -- the latter provides information on how good this objective is eventually satisfied by the resulting closed loop system. For MPC schemes with stabilizing terminal constraints the available analysis methods have reached a certain degree of maturity, see, e.g., the survey \cite{MaRRS00} and the references therin. Despite their widespread use in applications, cf.\ \cite{QB2003}, for schemes without stabilizing terminal constraints --- considered in this paper --- corresponding results are more recent and less elaborated. Concerning stability, the papers \cite{AlaB95,GMTT05,JadH05} show (under different types of controllability or detectability conditions) that stability can be expected if the optimization horizon is chosen sufficiently large, without, however, aiming at giving precise estimates for these horizons.

Closed loop performance of MPC controlled systems is measured by evaluating an infinite horizon functional along the closed loop trajectory. Suboptimality estimates, which typically allow to conclude stability of the closed loop, are then obtained by comparing this value with the optimal value of the infinite horizon problem. In \cite{ShaX97} an estimation method of this type for discrete time linear systems is presented which relies on a numerical approximation of the finite time optimal value function. Since for nonlinear or large scale systems this function is usually not computable, in \cite{GruR08} a method for finite or infinite dimensional discrete time nonlinear systems using ideas from relaxed dynamic programming has been presented. This approach allows for performance estimates based on controllability properties. Motivated by these results, in \cite{G07} a linear program has been developed whose solution precisely estimates the degree of suboptimality from exponential or finite time controllability.

The present paper builds upon \cite{G07} extending the analysis from this reference to MPC schemes with time varying {\em control horizon}, i.e., the interval between two consecutive optimizations or, equivalently, the interval on which each resulting open loop optimal control is applied. This setting is motivated by networked control systems in which the network performance determines the control horizon, see \cite{GPW08,GPW09} and the discussion after Remark \ref{minrem}, below. In particular, we thoroughly investigate the impact of different --- possibly time varying --- control horizons on the closed loop behavior. 

Moreover, we give an analytic solution to the linear program from \cite{G07} and -- as a consequence -- an explicit formula for the suboptimality estimate based on the $\KL_0$-function characterizing our controllability assumption. This allows for a much more detailed analysis which is the main contribution of this paper. We investigate -- among others -- the impact of the {\em optimization horizon}, i.e., the interval on which the predicted trajectory is optimized (and which we choose identical to the prediction horizon), on the suboptimality and stability of the MPC closed loop. Especially, we prove conjectures from \cite{G07} with respect to minimal stabilizing horizons which were based on numerical observations. Furthermore, we analyze the influence of adding a final weight in the finite horizon cost functional.

The paper is organized as follows. In Section \ref{setupsec} we describe the setup and problem formulation. In Section \ref{controlsec} we introduce our controllability assumption and briefly summarize the needed results from \cite{G07}. In Section \ref{stabsec} we show that our suboptimality result can be used to conclude stability, extending \cite[Section 5]{G07} to time varying control horizons. In Section \ref{alphasec} we present the explicit formula for our suboptimality index $\alpha$ in Theorem \ref{alpha_formula_thm}. In the ensuing sections we examine effects of different parameters on $\alpha$. In particular, in Section \ref{alphasec1} we investigate the impact of the optimization horizon and in Sections \ref{alphasec2} and \ref{alphasec3} we scrutinize qualitative and quantitative effects, respectively, of different control horizons. Finally, in Section \ref{exsec} we illustrate our results with numerical examples. A number of technical lemmata and their proofs can be found in the appendix in Section \ref{appendix}.

\section{Setup and Preliminaries}\label{setupsec}
We consider a nonlinear discrete time control system given by
\begin{equation}
	\label{2:eq:discrete control system}
	x(n + 1) = f(x(n), u(n)), \quad x(0) = x_0
\end{equation}
with $x(n) \in X$ and $u(n) \in U$ for $n \in \N_0$. Here the state space $X$ 
and the control value space $U$ are arbitrary metric spaces. We denote
the space of control sequences $u: \N_0 \rightarrow U$ by $\UU$ and
the solution trajectory for given $u \in \UU$ by $x_u(\cdot)$. Note that
constraints can be incorporated by replacing $X$ and $U$ by appropriate
subsets of the respective spaces. For simplicity of exposition,
however, we will not address feasibility issues in this paper. 

A typical class of such discrete time systems are sampled--data systems induced by a controlled --- finite or infinite dimensional --- differential equation with sampling period $T>0$. In this situation, the discrete time $n$ corresponds to the continuous time $t = n T$.

Our goal is to minimize the infinite horizon cost $J_\infty(x_0, u) = \sum_{n = 0}^\infty l(x_u(n), u(n))$ with running cost $l: X \times U \rightarrow \R_0^+$ by a multistep state feedback control (rigorously defined below in Definition \ref{2:def:m_i feedback law}). We denote the optimal value function for this problem by $V_\infty(x_0) := \inf_{u \in \UU} J_\infty(x_0,u)$. Since infinite horizon optimal control problems are in general computationally infeasible, we use a receding horizon approach in order to compute an approximately optimal controller. To this end, we consider the finite horizon functional
\begin{equation} \label{2:eq:finite cost functional}
	J_N(x_0, u) = \sum_{n = 0}^{N - 1} l(x_u(n), u(n))
\end{equation}
with {\em optimization horizon} $N\in\N$ inducing the optimal value function
\begin{equation} \label{2:eq:finite value function}
	V_N(x_0) = \inf_{u\in\UU} J_N(x_0,u).
\end{equation}
By solving this finite horizon optimal control problem we obtain $N$ control values $\mu(x_0,0),\linebreak[0] \mu(x_0,1),\linebreak[0]  \ldots,\linebreak[0]  \mu(x_0,N-1)$ depending on the state $x_0$. Implementing the first $m_0 \in \{1,\ldots,N-1\}$ elements of this sequence yields a new state $x(m_0)$. Iterative application of this construction then provides a control sequence on the infinite time interval, whose properties we intend to investigate in this paper. To this end, we introduce a more formal description of this construction. 
\begin{definition}
	Given a set $M\subseteq\{1,\ldots,m^\star\}$, $m^\star \in \mathbb{N}$, we call a control horizon sequence $(m_i)_{i \in \N_0}$ {\em admissible} if $m_i\in M$ holds for all $i\in \N_0$. Furthermore, for $k, n\in\N_0$ we define
	\begin{eqnarray*}
		\sigma(k) & := & \sum_{j=0}^{k-1} m_i \quad \mbox{(using the convention $\sum_{j=0}^{-1}=0$)}\\ \varphi(n) & := & \max \{\sigma(k)\,|\, k\in\N_0, \sigma(k)\le n\}.
	\end{eqnarray*}
\end{definition}

Using this notation, the applied control sequence can be expressed as
\[
\ldots, \mu(x(\sigma(k)),0), \ldots, \mu(x(\sigma(k)),m_k-1), \mu(x(\sigma(k+1)),0), \ldots
\] 
A closed loop interpretation of this construction can be obtained via multistep feedback laws. 
\begin{definition} \label{2:def:m_i feedback law}
        For $m^\star \geq 1$ and $M\subseteq \{1,\ldots,m^\star\}$ a multistep feedback law is a map $\mu : X \times \{ 0,\linebreak[0]  \ldots, \linebreak[0] m^\star - 1 \} \rightarrow U$ which for an admissible control horizon sequence $(m_i)_{i \in \N_0}$ is applied according to the rule $x_\mu(0)=x_0$, 
	\begin{equation} \label{2:def:m_i feedback law:eq1}
		x_\mu(n + 1) = f(x_\mu(n), \mu(x_\mu ( \varphi(n) ), n - \varphi(n))).
	\end{equation}
\end{definition}
Using this definition, the above construction is equivalent to the following definition.
\begin{definition} \label{2:def:mpc m-step feedback law}
        For $m^\star\ge 1$ and $N\ge m^\star+1$ we define the multistep MPC feedback law $\mu_{N, m^\star}(x_0,n) := u^\star(n)$, where $u^\star$ is a minimizing control for \eqref{2:eq:finite value function} with initial value $x_0$. 
\end{definition}
\begin{remark}
	For simplicity of exposition here we assume that the infimum in \eqref{2:eq:finite value function} is a minimum, i.e., that a minimizing control sequence $u^*$ exists.
\label{minrem}\end{remark}

Note that in ``classical'' MPC only the first element of the obtained finite horizon optimal sequence of control values is used. Our main motivation for considering this generalized feedback concept with varying control horizons $m_i$ are networked control systems (NCS) in which the transmission channel from the controller to the plant is subject to packet dropouts. In order to compensate these dropouts, at each successful transmission time $\sigma(k)$ a whole sequence of control values is transmitted to the plant. This sequence is then used until the next successful transmission at time $\sigma(k+1) = \sigma(k)+m_k$, for details see \cite{GPW08}. Note that in this application the control horizon $m_k$ is not known at time $\sigma(k)$.

In this paper we consider the conceptually simplest MPC approach imposing neither terminal costs nor terminal constraints. 
In order to measure the suboptimality degree of the multistep feedback for the infinite horizon problem we define 
\begin{equation*}
	V_\infty^{\mu,(m_i)}(x_0) := \sum_{n = 0}^\infty l(x_{\mu}(n), \mu(x_\mu(\varphi(n)), n - \varphi(n))).
\end{equation*}
Our approach relies on the following result from relaxed dynamic programming \cite{LinR06,Rant06}, which is a straightforward generalization of proposition \cite[Proposition 2.4]{G07}, cf.\ \cite{GPW08} for a proof. 
\begin{proposition} \label{2:prop:m-step suboptimality estimate}
	Consider a multistep feedback law $\wmu: X \times \{ 0, \ldots, m^\star - 1\} \rightarrow U$, a set $M\subseteq\{1,\linebreak[0] \ldots,\linebreak[0] m^\star\}$ and a function $\wV: X \rightarrow \R_0^+$ and assume that for each admissible control horizon sequence $(m_i)_{i\in\N_0}$ and each $x_0\in X$ the corresponding solution $x_\wmu(n)$ with $x_\wmu(0) = x_0$ satisfies 
	\begin{equation}
		\label{2:prop:m-step suboptimality estimate:eq1}
		\wV(x_0) \geq \wV(x_\wmu(m_0)) + \alpha \sum_{k = 0}^{m_0 - 1} l(x_\wmu(k), \wmu(x_0, k))
	\end{equation}
	for some $\alpha\in(0,1]$. Then for all $x_0 \in X$ and all admissible $(m_i)_{i\in\N_0}$ the estimate $\alpha V_\infty(x_0) \leq \linebreak[0] \alpha \linebreak[0] V_\infty^{\wmu,(m_i)}(x_0)\linebreak[0]  \leq \linebreak[0] \wV(x_0)$ holds.
\end{proposition}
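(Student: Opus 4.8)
The plan is to turn the one-block relaxed-Lyapunov inequality \eqref{2:prop:m-step suboptimality estimate:eq1} into a telescoping estimate along the entire closed-loop trajectory and then let the horizon tend to infinity. The key enabling observation is that the family of admissible control horizon sequences is invariant under left shifts: if $(m_i)_{i\in\N_0}$ is admissible, then so is $(m_{i+1})_{i\in\N_0}$, since every element still lies in $M$. Consequently the hypothesis may be invoked not only at the initial time but at every restart time $\sigma(k)$.

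Concretely, I would first fix an admissible sequence $(m_i)$ and an initial value $x_0$ and establish the consistency of trajectories: for $\sigma(k)\le n<\sigma(k+1)$ one has $\varphi(n)=\sigma(k)$, so by \eqref{2:def:m_i feedback law:eq1} the closed-loop trajectory on this block is generated by the feedback $\wmu(x_\wmu(\sigma(k)),\cdot)$ issued from the state $x_\wmu(\sigma(k))$. Hence the tail $x_\wmu(\sigma(k)+\cdot)$ coincides with the closed-loop trajectory starting at $x_\wmu(\sigma(k))$ under the shifted admissible sequence $(m_{k+i})_i$, whose first control horizon is $m_k$. Applying the hypothesis \eqref{2:prop:m-step suboptimality estimate:eq1} to this shifted problem yields, for every $k\in\N_0$,
\begin{equation*}
\wV(x_\wmu(\sigma(k))) \;\ge\; \wV(x_\wmu(\sigma(k+1))) + \alpha \sum_{n=\sigma(k)}^{\sigma(k+1)-1} l\bigl(x_\wmu(n),\wmu(x_\wmu(\sigma(k)),n-\sigma(k))\bigr).
\end{equation*}

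I would then sum these inequalities for $k=0,\dots,K-1$. The $\wV$-terms telescope, leaving
\begin{equation*}
\wV(x_0) \;\ge\; \wV(x_\wmu(\sigma(K))) + \alpha \sum_{n=0}^{\sigma(K)-1} l\bigl(x_\wmu(n),\wmu(x_\wmu(\varphi(n)),n-\varphi(n))\bigr).
\end{equation*}
Since $\wV\ge 0$ the first term on the right may be dropped, and because each $m_i\ge 1$ we have $\sigma(K)\to\infty$ as $K\to\infty$; letting $K\to\infty$ and using $l\ge 0$ (so the partial sums increase monotonically to the infinite sum) gives $\wV(x_0)\ge\alpha\,V_\infty^{\wmu,(m_i)}(x_0)$. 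Finally, $V_\infty^{\wmu,(m_i)}(x_0)$ is the value of one particular infinite-horizon control sequence, so it is bounded below by the infimum $V_\infty(x_0)$; multiplying by $\alpha>0$ yields $\alpha V_\infty(x_0)\le\alpha V_\infty^{\wmu,(m_i)}(x_0)\le\wV(x_0)$, as claimed.

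The routine parts are the telescoping and the monotone passage to the limit, both of which are clean because all running costs are nonnegative. The step that requires genuine care --- and which I expect to be the main obstacle --- is the trajectory-consistency and shift argument: one must verify precisely that the closed-loop evolution generated on the block $[\sigma(k),\sigma(k+1))$ by the memoryless multistep feedback from $x_\wmu(\sigma(k))$ is exactly the segment produced by the shifted admissible sequence, so that the hypothesis, which is literally stated only for the first block starting at a generic $x_0$, legitimately applies at every restart time. This is where the specific structure of Definition \ref{2:def:m_i feedback law} (namely that the applied control depends only on $x_\wmu(\varphi(n))$ and the offset $n-\varphi(n)$, and that $\varphi$ is constant on each block) is essential.
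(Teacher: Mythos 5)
Your proposal is correct and takes essentially the same route as the proof the paper relies on: the paper defers the proof of Proposition \ref{2:prop:m-step suboptimality estimate} to \cite{GPW08} (as a generalization of \cite[Proposition 2.4]{G07}), and that proof is precisely your argument --- shift-invariance of admissible sequences together with trajectory consistency to apply \eqref{2:prop:m-step suboptimality estimate:eq1} at every restart time $\sigma(k)$, telescoping, discarding the nonnegative term $\wV(x_\wmu(\sigma(K)))$, monotone passage to the limit using $l\ge 0$, and comparison with the infimum defining $V_\infty$. No gaps.
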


\section{Controllability and performance bounds}\label{controlsec}
In this section we introduce an asymptotic controllability assumption and deduce several consequences for our optimal control problem. In order to facilitate this relation we will formulate our basic controllability assumption, below, not in terms of the trajectory but in terms of the running cost $l$ along a trajectory.\\
To this end, we say that a continuous function $\rho: \R_{\geq 0} \rightarrow \R_{\geq 0}$ is of class $\KK_\infty$ if it satisfies $\rho(0) = 0$, is strictly increasing and unbounded. Furthermore, we say that a continuous function $\beta: \R_{\geq 0} \times \R_{\geq 0} \rightarrow \R_{\geq 0}$ is of class $\KL_0$ if for each $r > 0$ we have $\lim_{t \rightarrow \infty} \beta(r, t) = 0$ and for each $t \geq 0$ we either have $\beta(\cdot, t) \in \KK_\infty$ or $\beta(\cdot, t) \equiv 0$. Note that in order to allow for tighter bounds for the actual controllability behavior of the system we use a larger class than the usual class $\KL$. It is, however, easy to see that each $\beta \in \KL_0$ can be overbounded by a $\tilde{\beta} \in \KL$, e.g., by setting $\tilde \beta(r, t) = \sup_{\tau \geq t} \beta(r, \tau) + e^{-t} r$. Moreover, we define $l^\star(x) := \min_{u \in U} l(x, u)$.

\begin{assumption} \label{3:ass:controllability}
	Given a function $\beta \in \KL_0$, for each $x_0 \in X$ there exists a control function $u_{x_0} \in \UU$ satisfying $l(x_{u_{x_0}}(n), u_{x_0}(n)) \leq \beta(l^\star(x_0), n)$ for all $n \in \N_0$.
\end{assumption}

Special cases for $\beta \in \KL_0$ are
\begin{equation}
	\label{3:eq:exponential controllability}
	\beta(r, n) = C \sigma^n r
\end{equation}
for real constants $C \geq 1$ and $\sigma \in (0, 1)$, i.e., {\em exponential controllability}, and
\begin{equation}
	\label{3:eq:finite time controllability}
	\beta(r,n) = c_n r
\end{equation}
for some real sequence $(c_n)_{n \in \N_0}$ with $c_n \geq 0$ and $c_n = 0$ for all $n \geq n_0$, i.e., {\em finite time controllability} (with linear overshoot).\\
For certain results it will be useful to have the property
\begin{equation}
	\label{3:eq:submultiplicativity}
	\beta(r, n + m) \leq \beta( \beta(r, n), m) \quad \mbox{for all } r \geq 0, n, m \in \N_0.
\end{equation}
Property \eqref{3:eq:submultiplicativity} ensures that any sequence of the form $\lambda_n = \beta(r, n)$, $r > 0$, also fulfills $\lambda_{n + m} \leq \beta(\lambda_n, m)$. It is, for instance, always satisfied in case \eqref{3:eq:exponential controllability} and satisfied in case \eqref{3:eq:finite time controllability} if and only if $c_{n + m} \leq c_n c_m$. If needed, this property can be assumed without loss of generality, cf. \cite[Section 3]{G07}.

In order to ease notation, we define the value
\begin{equation}
	\label{3:eq:definition B_N}
	B_N(r) := \sum_{n = 0}^{N - 1} \beta(r,n).
\end{equation}
for any $r \geq 0$ and any $N \in \mathbb{N}_{\geq 1}$ . An immediate
consequence of Assumption \ref{3:ass:controllability} and Bellman's
optimality principle $V_N(x) = \min_{u\in U}\{l(x,u) +
V_{N-1}(f(x,u))\}$ are the following lemmata from \cite{G07}.

\begin{lemma}\label{VNboundlemma}
	For each $N \geq 1$ the inequality
	\begin{equation}
		\label{cont_bound}
		V_N(x_0) \leq B_N(l^\star(x_0))
	\end{equation}
	holds.
\end{lemma}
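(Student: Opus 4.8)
The plan is to bound $V_N(x_0)$ from above by the finite-horizon cost of a single, conveniently chosen control sequence, exploiting only that $V_N$ is defined as an infimum over $\UU$ together with the controllability estimate. Note that Bellman's optimality principle, although mentioned in the surrounding text, is not actually needed for this particular inequality; it will presumably be used for the companion lemmata. First I would fix $x_0 \in X$ and invoke Assumption \ref{3:ass:controllability} to obtain a control function $u_{x_0} \in \UU$ whose associated running cost satisfies $l(x_{u_{x_0}}(n), u_{x_0}(n)) \leq \beta(l^\star(x_0), n)$ for every $n \in \N_0$.

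Next, since $V_N(x_0) = \inf_{u \in \UU} J_N(x_0, u)$ is an infimum, it is in particular no larger than the cost $J_N(x_0, u_{x_0})$ incurred by this specific admissible control. Writing out $J_N$ according to \eqref{2:eq:finite cost functional} and applying the pointwise bound above term by term yields
\[
V_N(x_0) \leq J_N(x_0, u_{x_0}) = \sum_{n=0}^{N-1} l(x_{u_{x_0}}(n), u_{x_0}(n)) \leq \sum_{n=0}^{N-1} \beta(l^\star(x_0), n).
\]
Finally I would identify the right-hand sum with $B_N(l^\star(x_0))$ via the definition \eqref{3:eq:definition B_N}, which is exactly the asserted estimate \eqref{cont_bound}.

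There is essentially no obstacle here: the bound is an immediate substitution. The one point to keep in mind is that the controllability estimate from Assumption \ref{3:ass:controllability} holds for all $n$ simultaneously along the trajectory of the \emph{same} control $u_{x_0}$, so that summing the $N$ pointwise inequalities over $n = 0, \ldots, N-1$ is legitimate. No measurability, regularity, or optimality-principle argument is required beyond the definition of the infimum as a greatest lower bound.
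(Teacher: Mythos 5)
Your proof is correct and is exactly the argument the paper intends (the paper itself gives no proof here but defers to \cite{G07}, where precisely this reasoning is used): bound the infimum $V_N(x_0)$ by the cost $J_N(x_0,u_{x_0})$ of the control supplied by Assumption \ref{3:ass:controllability} and sum the pointwise bounds to obtain $B_N(l^\star(x_0))$. Your side remark is also accurate: Bellman's optimality principle is not needed for this inequality and enters only in the proof of the companion Lemma \ref{lemma}.
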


\begin{lemma}\label{lemma}
	Suppose Assumption \ref{3:ass:controllability} holds and consider $x_0 \in X$ and an optimal control $u^\star$ for the finite horizon optimal control problem \eqref{2:eq:finite value function} with optimization horizon $N \geq 1$. Then for each $j = 0, \ldots, N - 2$ the inequality
	\begin{equation}
		J_{N-j}(x_{u^\star}(j), u^\star(j + \cdot)) \leq B_{N - j}(l^\star(x_{u^\star}(j))
	\end{equation}
	and for each $m = 1, \ldots, N - 1$ and each $j = 0, \ldots, N - m - 1$ the inequality
	\begin{equation}
		V_{N}(x_{u^\star}(m)) \leq J_j(x_{u^\star}(m), u^\star(m + \cdot)) + B_{N - j} (l^\star (x_{u^\star}(m + j)))
	\end{equation}
	holds for $B_{N - j}$ from \eqref{3:eq:definition B_N}. 
\end{lemma}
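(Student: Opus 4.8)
The plan is to prove both inequalities by combining the dynamic programming (optimality) principle with the controllability bound of Lemma \ref{VNboundlemma}; no new machinery is needed beyond an explicit construction of admissible controls whose cost is estimated termwise.

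For the first inequality, I would begin with the observation that the tail $u^\star(j+\cdot)$ of the optimal control, when applied from the intermediate state $x_{u^\star}(j)$, merely replays the remaining portion of the original optimal trajectory, so that $x_{u^\star(j+\cdot)}(n) = x_{u^\star}(j+n)$ and hence
\[
J_{N-j}(x_{u^\star}(j), u^\star(j+\cdot)) = \sum_{k=j}^{N-1} l(x_{u^\star}(k), u^\star(k)).
\]
The principle of optimality, obtained by iterating Bellman's equation $V_N(x) = \min_{u\in U}\{l(x,u) + V_{N-1}(f(x,u))\}$, tells us that this tail is itself optimal for the $(N-j)$-step problem started at $x_{u^\star}(j)$, so the left-hand side in fact equals $V_{N-j}(x_{u^\star}(j))$. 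Applying Lemma \ref{VNboundlemma} with horizon $N-j$ and initial state $x_{u^\star}(j)$ then yields the asserted bound $B_{N-j}(l^\star(x_{u^\star}(j)))$.

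For the second inequality the idea is to construct an explicit, generally suboptimal, control on the horizon $N$ for the problem started at $x_{u^\star}(m)$ and to estimate its cost, exploiting that $V_N$ is an infimum. Concretely, I would \emph{concatenate} two pieces: first use the tail $u^\star(m+\cdot)$ for $j$ steps, which drives the state from $x_{u^\star}(m)$ to $x_{u^\star}(m+j)$ at cost exactly $J_j(x_{u^\star}(m), u^\star(m+\cdot))$; then, from $x_{u^\star}(m+j)$, switch to the controllability control $u_{x_{u^\star}(m+j)}$ supplied by Assumption \ref{3:ass:controllability} for the remaining $N-j$ steps. By that assumption the running cost of this second piece at step $n$ is bounded by $\beta(l^\star(x_{u^\star}(m+j)), n)$, so summing over $n = 0, \ldots, N-j-1$ bounds its total cost by $B_{N-j}(l^\star(x_{u^\star}(m+j)))$. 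Since the concatenated control spans total horizon $j + (N-j) = N$, the definition of $V_N$ as an infimum over admissible controls gives
\[
V_N(x_{u^\star}(m)) \le J_j(x_{u^\star}(m), u^\star(m+\cdot)) + B_{N-j}(l^\star(x_{u^\star}(m+j))),
\]
which is the claim.

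The steps are essentially routine once these two building blocks are in place; the only point requiring care is the bookkeeping of indices, namely verifying that $m+j \le N-1$ (guaranteed by $j \le N-m-1$), so that all referenced states and control values along the optimal trajectory are defined, and that the constructed control's horizon adds up to exactly $N$. I do not anticipate a genuine obstacle, since both inequalities ultimately reduce to writing down an admissible control and estimating its cost termwise.
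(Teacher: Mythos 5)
Your proof is correct and follows exactly the route the paper indicates (and defers to \cite{G07} for): the first inequality via the principle of optimality applied to the tail $u^\star(j+\cdot)$ combined with Lemma \ref{VNboundlemma}, and the second via concatenating the tail $u^\star(m+\cdot)$ for $j$ steps with the controllability control from Assumption \ref{3:ass:controllability} and using that $V_N$ is an infimum. The index bookkeeping you check is precisely what is needed, so nothing is missing.
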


Now we provide a constructive approach in order to compute $\alpha$ in
\eqref{2:prop:m-step suboptimality estimate:eq1} for systems
satisfying Assumption \ref{3:ass:controllability}. Note that
\eqref{2:prop:m-step suboptimality estimate:eq1} only depends on $m_0$
and not on the remainder of the control horizon sequence. Hence, we
can perform the computation separately for each control horizon $m$ and
obtain the desired $\alpha$ for variable $m$ by minimizing over the
$\alpha$-values for all admissible $m$. 

For our computational approach we consider arbitrary values
$\lambda_0, \ldots, \lambda_{N - 1} > 0$ and $\nu > 0$ and start by
deriving necessary conditions under which these values coincide with
an optimal sequence $l(x_{u^\star}(n), u^\star(n))$ and an optimal
value $V_N(x_{u^\star}(m))$, respectively. 

\begin{proposition}\label{4:prop:necessary conditions}
	Suppose Assumption \ref{3:ass:controllability} holds and consider $N
        \geq 1$, $m \in \{ 1, \ldots, N - 1 \}$, a sequence $\lambda_n
        > 0$, $n = 0, \ldots, N - 1$, and a value $\nu > 0$. Consider $x_0
        \in X$ and assume that there exists a minimizing control
        $u^\star \in \UU$ for \eqref{2:eq:finite value function} such
        that $\lambda_n$ equals $l(x_{u^\star}(n), u^\star(n))$ for all
        $n \in \{0, \ldots, N - 1\}$. Then 
	\begin{equation}
		\label{4:prop:necessary conditions:eq1}
		\sum_{n = k}^{N - 1} \lambda_n \leq B_{N - k}(\lambda_k), \quad	k = 0, \ldots, N - 2
	\end{equation}
	holds true and if furthermore $\nu = V_N(x_{u^\star}(m))$ we have
	\begin{equation}
		\label{4:prop:necessary conditions:eq2}
		\nu \leq \sum_{n = 0}^{j - 1} \lambda_{n + m} + B_{N - j}(\lambda_{j + m}), \quad j = 0, \ldots, N - m - 1.
	\end{equation}
\end{proposition}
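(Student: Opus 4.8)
The plan is to obtain both inequalities essentially as a translation of Lemma \ref{lemma} into the $\lambda$-notation, using only the monotonicity of $r \mapsto B_N(r)$ and the elementary bound $l^\star(x) \le l(x,u)$ valid for every $u \in U$. I would observe first that, under the standing hypothesis $\lambda_n = l(x_{u^\star}(n), u^\star(n))$, the trajectory segment $x_{u^\star}(k+\cdot)$ is precisely the solution starting at $x_{u^\star}(k)$ under the shifted control $u^\star(k+\cdot)$; since the dynamics \eqref{2:eq:discrete control system} are deterministic, this gives $J_{N-k}(x_{u^\star}(k), u^\star(k+\cdot)) = \sum_{n=k}^{N-1}\lambda_n$ and, in the same way, $J_j(x_{u^\star}(m), u^\star(m+\cdot)) = \sum_{n=0}^{j-1}\lambda_{n+m}$.

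For \eqref{4:prop:necessary conditions:eq1}, I would apply the first inequality of Lemma \ref{lemma} with $j=k$ (noting that its index range $j = 0,\ldots,N-2$ matches the asserted range $k=0,\ldots,N-2$), which yields $\sum_{n=k}^{N-1}\lambda_n = J_{N-k}(x_{u^\star}(k), u^\star(k+\cdot)) \le B_{N-k}(l^\star(x_{u^\star}(k)))$. Since $l^\star(x_{u^\star}(k)) = \min_{u}l(x_{u^\star}(k),u) \le l(x_{u^\star}(k), u^\star(k)) = \lambda_k$, it then remains to invoke that $B_{N-k}$ is nondecreasing — which follows from \eqref{3:eq:definition B_N}, because each $\beta(\cdot,n)$ is either of class $\KK_\infty$ or identically zero, hence nondecreasing — to conclude $B_{N-k}(l^\star(x_{u^\star}(k))) \le B_{N-k}(\lambda_k)$, as required.

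For \eqref{4:prop:necessary conditions:eq2}, assuming $\nu = V_N(x_{u^\star}(m))$, I would feed the identity for $J_j$ above into the second inequality of Lemma \ref{lemma}, whose admissible indices $m \in \{1,\ldots,N-1\}$, $j = 0,\ldots,N-m-1$ coincide with those in the statement; this gives $\nu \le \sum_{n=0}^{j-1}\lambda_{n+m} + B_{N-j}(l^\star(x_{u^\star}(m+j)))$. I would then bound $l^\star(x_{u^\star}(m+j)) \le l(x_{u^\star}(m+j), u^\star(m+j)) = \lambda_{j+m}$ and apply monotonicity of $B_{N-j}$ exactly as before to finish. In both cases the content is really just Lemma \ref{lemma}, so I do not expect a genuinely hard step: the only points demanding care are the monotonicity of $B_N$ and the harmless index bookkeeping in shifting the control by $k$ (respectively $m$), after which the hypothesis $\lambda_n = l(x_{u^\star}(n),u^\star(n))$ with $u^\star$ optimal lets the two displayed inequalities of Lemma \ref{lemma} be read off directly.
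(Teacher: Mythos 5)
Your proof is correct and follows essentially the same route as the paper, whose entire argument consists of observing that $\lambda_n$ and $\nu$ satisfy the inequalities of Lemma \ref{lemma}, which are then read off as \eqref{4:prop:necessary conditions:eq1} and \eqref{4:prop:necessary conditions:eq2}. You merely make explicit two small steps the paper leaves implicit — the shift identity $J_{N-k}(x_{u^\star}(k),u^\star(k+\cdot))=\sum_{n=k}^{N-1}\lambda_n$ and the replacement of $l^\star(x_{u^\star}(k))$ by $\lambda_k$ via $l^\star(x)\le l(x,u)$ together with the monotonicity of $B_{N-k}$ — which is a sound and slightly more careful rendering of the same argument.
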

\begin{proof}
	If the stated conditions hold, then $\lambda_n$ and $\nu$ meet the inequalities given in Lemma \ref{lemma}, which is exactly \eqref{4:prop:necessary conditions:eq1} and \eqref{4:prop:necessary conditions:eq2}. 
\end{proof}

Using this proposition a sufficient condition for suboptimality of the
MPC feedback law $\mu_{N,m}$ is given in Theorem
\ref{4:thm:optimality} which is proved in \cite{G07}.

\begin{theorem} \label{4:thm:optimality}
	Consider $\beta \in \KL_0$, $N \geq 1$, $m \in \{ 1, \ldots, N - 1 \}$, and assume that all sequences $\lambda_n > 0$, $n = 0, \ldots, N - 1$ and values $\nu > 0$ fulfilling \eqref{4:prop:necessary conditions:eq1}, \eqref{4:prop:necessary conditions:eq2} satisfy the inequality 
	\begin{equation}
		\label{4:thm:optimality:eq1}
		\sum_{n = 0}^{N - 1} \lambda_n - \nu \geq \alpha \sum_{n = 0}^{m - 1} \lambda_n
	\end{equation}
	for some $\alpha\in(0,1]$. Then for each optimal control problem \eqref{2:eq:discrete control system}, \eqref{2:eq:finite value function} satisfying Assumption \ref{3:ass:controllability} the assumptions of Proposition \ref{2:prop:m-step suboptimality estimate} are satisfied for the multistep MPC feedback law $\mu_{N,m}$ and in particular the inequality
$\alpha V_\infty(x) \leq \alpha V_\infty^{\mu_{N, m}}(x) \leq V_N(x)$
	holds for all $x \in X$.
\end{theorem}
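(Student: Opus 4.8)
The plan is to instantiate Proposition~\ref{2:prop:m-step suboptimality estimate} with the choices $\wV := V_N$ and $\wmu := \mu_{N,m}$, taking $M = \{m\}$ so that the single admissible control horizon sequence is the constant one $m_i \equiv m$ (in particular $V_\infty^{\mu_{N,m}} = V_\infty^{\wmu,(m_i)}$). Once the relaxed dynamic programming inequality \eqref{2:prop:m-step suboptimality estimate:eq1} is verified for these data with $m_0 = m$, the asserted chain $\alpha V_\infty(x) \le \alpha V_\infty^{\mu_{N,m}}(x) \le V_N(x)$ is exactly the conclusion of that proposition, so the entire task reduces to establishing \eqref{2:prop:m-step suboptimality estimate:eq1}.

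To this end I would fix an arbitrary $x_0 \in X$ and pick a minimizing control $u^\star \in \UU$ for \eqref{2:eq:finite value function} (which exists by Remark~\ref{minrem}). I then set $\lambda_n := l(x_{u^\star}(n), u^\star(n))$ for $n = 0, \ldots, N-1$ and $\nu := V_N(x_{u^\star}(m))$. By Definition~\ref{2:def:mpc m-step feedback law} the feedback $\mu_{N,m}$ reproduces $u^\star$ on the first $m$ stages, so applying the feedback rule \eqref{2:def:m_i feedback law:eq1} with $\varphi(n) = 0$ for $n = 0,\ldots,m-1$ yields, by induction, $x_{\mu_{N,m}}(k) = x_{u^\star}(k)$ for $k = 0, \ldots, m$. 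Consequently $l(x_{\mu_{N,m}}(k), \mu_{N,m}(x_0,k)) = \lambda_k$ for $k = 0,\ldots,m-1$ and $\wV(x_{\mu_{N,m}}(m)) = \nu$, while optimality of $u^\star$ gives $\wV(x_0) = V_N(x_0) = \sum_{n=0}^{N-1} \lambda_n$. With these identifications the inequality \eqref{2:prop:m-step suboptimality estimate:eq1} to be proved reads precisely $\sum_{n=0}^{N-1}\lambda_n - \nu \ge \alpha \sum_{n=0}^{m-1}\lambda_n$, i.e.\ it coincides with \eqref{4:thm:optimality:eq1}.

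The decisive observation is then that this particular pair $(\lambda,\nu)$ is admissible for the standing hypothesis. Indeed, since the $\lambda_n$ are the running costs along the \emph{optimal} trajectory and $\nu = V_N(x_{u^\star}(m))$ is the corresponding optimal value, Proposition~\ref{4:prop:necessary conditions} guarantees that \eqref{4:prop:necessary conditions:eq1} and \eqref{4:prop:necessary conditions:eq2} hold for $(\lambda,\nu)$; here the passage from the bounds of Lemma~\ref{lemma}, which involve $B_{N-k}(l^\star(\cdot))$, to the stated form uses $\lambda_k \ge l^\star(x_{u^\star}(k))$ together with the monotonicity of $B_{N-k}$ (each $\beta(\cdot,n)$ lies in $\KK_\infty$ or is identically zero). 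The theorem's hypothesis therefore applies to exactly this $(\lambda,\nu)$ and delivers \eqref{4:thm:optimality:eq1}, hence \eqref{2:prop:m-step suboptimality estimate:eq1}. Since $x_0$ was arbitrary and $M=\{m\}$ forces $m_0 = m$, Proposition~\ref{2:prop:m-step suboptimality estimate} then yields the claim.

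The step I expect to be the main obstacle is the strict positivity $\lambda_n > 0$, $\nu > 0$ built into the hypothesis (and into Proposition~\ref{4:prop:necessary conditions}): the optimal running cost may vanish at some stage, e.g.\ once the trajectory reaches a cost-free state, in which case neither the hypothesis nor Proposition~\ref{4:prop:necessary conditions} applies verbatim to the data above. I would treat this degenerate case by a limiting argument, approximating the (merely nonnegative) optimal cost sequence by strictly positive feasible sequences and passing to the limit in \eqref{4:thm:optimality:eq1}, which is legitimate because $B_N$ is continuous. The delicate point is to ensure such approximations both remain feasible for \eqref{4:prop:necessary conditions:eq1}--\eqref{4:prop:necessary conditions:eq2} and converge to the given data; this density question is nontrivial since, for convex $\beta(\cdot,n)$, the feasible set need not be convex, so the approximation must be constructed carefully using the monotonicity and continuity of $B_N$ rather than by naive perturbation or convex combination.
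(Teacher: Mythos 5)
Your main line is correct and is essentially the intended argument (note the paper itself does not prove Theorem~\ref{4:thm:optimality}; it defers to \cite{G07}, where exactly this route is taken): with $\wV=V_N$, $\wmu=\mu_{N,m}$, $M=\{m\}$, the closed loop coincides with the optimal open-loop trajectory up to time $m$, so that for $\lambda_n=l(x_{u^\star}(n),u^\star(n))$ and $\nu=V_N(x_{u^\star}(m))$ the relaxed dynamic programming inequality \eqref{2:prop:m-step suboptimality estimate:eq1} is literally \eqref{4:thm:optimality:eq1}, feasibility of $(\lambda,\nu)$ comes from Proposition~\ref{4:prop:necessary conditions} (your remark about monotonicity of $B_{N-k}$ is the right gloss on that step), and Proposition~\ref{2:prop:m-step suboptimality estimate} then yields the claim.

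The degenerate case, however, is a genuine gap, and the limiting argument you propose cannot be repaired in general. Concrete obstruction: take $\beta(r,0)=r$ and $\beta(r,n)=0$ for $n\ge 1$ (finite time controllability in one step with $c_0=1$; realized, e.g., by $x(n+1)=u(n)$, $l(x,u)=|x|$ on $X=U=\R$). Then constraint \eqref{4:prop:necessary conditions:eq1} for $k=N-2$ reads $\lambda_{N-2}+\lambda_{N-1}\le B_2(\lambda_{N-2})=\lambda_{N-2}$, i.e.\ $\lambda_{N-1}\le 0$, so there exist \emph{no} strictly positive feasible sequences at all (the hypothesis of the theorem is vacuously true, e.g.\ with $\alpha=1$), while every optimal trajectory of this system produces degenerate data; there is simply nothing positive and feasible to approximate with. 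Even when positive feasible points do exist, raising the zero tail entries increases the left-hand side of \eqref{4:prop:necessary conditions:eq1} for smaller indices $k$ without changing its right-hand side, so active constraints are violated; no density statement of the kind you need is available. The correct fix is direct and needs no limit: if $\lambda_j=0$ for some $j$, then $l^\star(x_{u^\star}(j))\le\lambda_j=0$, and the first inequality of Lemma~\ref{lemma} gives $\sum_{n=j}^{N-1}\lambda_n\le B_{N-j}(0)=0$ (each $\beta(\cdot,n)$ vanishes at $0$), so zeros propagate forward along the optimal trajectory and in particular $\lambda_{N-1}=0$; the second inequality of Lemma~\ref{lemma} with $j=N-1-m$ then yields $\nu\le\sum_{n=m}^{N-2}\lambda_n+B_{m+1}(0)=\sum_{n=m}^{N-2}\lambda_n$, whence $\sum_{n=0}^{N-1}\lambda_n-\nu\ge\sum_{n=0}^{m-1}\lambda_n\ge\alpha\sum_{n=0}^{m-1}\lambda_n$, i.e.\ \eqref{2:prop:m-step suboptimality estimate:eq1} holds in the degenerate case without any appeal to the hypothesis. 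Conversely, if all $\lambda_n>0$, the same propagation argument shows $l^\star(x_{u^\star}(m))>0$ and hence $\nu\ge l^\star(x_{u^\star}(m))>0$, so the case distinction on the $\lambda_n$ alone suffices and your main argument applies verbatim.
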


In view of Theorem \ref{4:thm:optimality}, the value $\alpha$ can be interpreted as a performance bound which indicates how good the receding horizon MPC strategy approximates the infinite horizon problem. In the remainder of this section we present an optimization based approach for computing $\alpha$. To this end, consider the following optimization problem.

\begin{problem}\label{optprob}
	Given $\beta \in \KL_0$, $N \geq 1$ and $m \in \{ 1, \ldots, N - 1 \}$, compute 
	\begin{equation*}
		\alpha_{N,m}^1 := \inf_{\lambda_0, \ldots, \lambda_{N - 1}, \nu} \frac{\sum_{n = 0}^{N - 1} \lambda_n - \nu}{\sum_{n = 0}^{m - 1} \lambda_n}
	\end{equation*}
	subject to the constraints \eqref{4:prop:necessary conditions:eq1}, \eqref{4:prop:necessary conditions:eq2}, and $\lambda_0, \ldots, \lambda_{N - 1}, \nu > 0$.
\end{problem}

The following is a straightforward corollary from Theorem \ref{4:thm:optimality}.

\begin{corollary}\label{optcor}
	Consider $\beta \in \KL_0$, $N \geq 1$, $m \in \{ 1, \ldots, N
        - 1 \}$, and assume that the optimization problem
        \ref{optprob} has an optimal value $\alpha \in (0, 1]$. Then
        for each optimal control problem \eqref{2:eq:discrete control
          system}, \eqref{2:eq:finite value function} satisfying
        Assumption \ref{3:ass:controllability} the assumptions of
        Theorem \ref{4:thm:optimality} are satisfied and the
        assertions from Theorem \ref{4:thm:optimality} hold.
\end{corollary}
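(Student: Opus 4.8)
The plan is to recognize that Corollary \ref{optcor} merely rephrases the hypothesis of Theorem \ref{4:thm:optimality} in the language of the optimization Problem \ref{optprob}. Observe that the feasible set of Problem \ref{optprob} is by definition exactly the set of tuples $(\lambda_0, \ldots, \lambda_{N-1}, \nu)$ satisfying the constraints \eqref{4:prop:necessary conditions:eq1} and \eqref{4:prop:necessary conditions:eq2} together with $\lambda_0, \ldots, \lambda_{N-1}, \nu > 0$, while its objective is the ratio of $\sum_{n=0}^{N-1}\lambda_n - \nu$ to $\sum_{n=0}^{m-1}\lambda_n$. The hypothesis of Theorem \ref{4:thm:optimality}, namely that every such feasible tuple satisfies \eqref{4:thm:optimality:eq1}, is thus precisely the statement that this ratio is bounded below by $\alpha$ on the feasible set.

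First I would record that on the feasible set the denominator $\sum_{n=0}^{m-1}\lambda_n$ is strictly positive: since $m \geq 1$ and every $\lambda_n > 0$ by the positivity constraints of Problem \ref{optprob}, this sum contains at least one strictly positive summand. This is the only point requiring a moment's care, as it legitimizes passing between the ratio formulation of the objective and the product formulation appearing in \eqref{4:thm:optimality:eq1}. Next, since $\alpha$ is by assumption the optimal value (i.e.\ the infimum) of the objective over the feasible set, every feasible tuple satisfies
\[
\frac{\sum_{n = 0}^{N - 1}\lambda_n - \nu}{\sum_{n = 0}^{m - 1}\lambda_n} \geq \alpha .
\]
Multiplying through by the strictly positive denominator yields precisely \eqref{4:thm:optimality:eq1} with this $\alpha$, which by hypothesis lies in $(0,1]$. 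Hence the hypothesis of Theorem \ref{4:thm:optimality} is verified.

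Finally I would invoke Theorem \ref{4:thm:optimality} directly. Its hypothesis now being established for the given $\beta$, $N$, $m$ and $\alpha$, its conclusion carries over verbatim: for every optimal control problem \eqref{2:eq:discrete control system}, \eqref{2:eq:finite value function} satisfying Assumption \ref{3:ass:controllability} the assumptions of Proposition \ref{2:prop:m-step suboptimality estimate} are met for the multistep MPC feedback law $\mu_{N,m}$, and the performance estimate $\alpha V_\infty(x) \leq \alpha V_\infty^{\mu_{N, m}}(x) \leq V_N(x)$ holds for all $x \in X$. There is essentially no further obstacle, since the corollary is by design only a restatement of Theorem \ref{4:thm:optimality} through the infimum reformulation.
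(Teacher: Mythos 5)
Your proposal is correct and matches the paper's intent exactly: the paper gives no separate proof, calling this a ``straightforward corollary'' of Theorem \ref{4:thm:optimality}, and your argument --- that the optimal (infimal) value $\alpha$ of Problem \ref{optprob} bounds the ratio from below on the feasible set, so multiplying by the strictly positive denominator $\sum_{n=0}^{m-1}\lambda_n$ recovers hypothesis \eqref{4:thm:optimality:eq1} --- is precisely that straightforward argument spelled out.
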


As already mentioned in \cite[Remark 4.3]{G07}, our setting can be easily extended to the setting including an additional weight $\omega \geq 1$ on the final term, i.e., altering our finite time cost functional by adding $(\omega - 1) l(x_u(N-1), u(N-1))$. Note that the original form of the functional $J_N$ is obtained by setting $\omega = 1$. All results in this section remain valid if the statements are suitably adapted. In particular, \eqref{2:eq:finite cost functional} and \eqref{3:eq:definition B_N} become
\begin{eqnarray}
	J_N(x_0, u) & := & \sum_{n = 0}^{N - 2} l(x_u(n), u(n)) + \omega l(x_u(N-1), u(N-1)) \nonumber \\
	\label{3:eq:definition B_N_omega}
	B_N(r) & := & \sum_{n = 0}^{N - 2} \beta(r,n) + \omega \beta(r,N-1).
\end{eqnarray}
and the formula in Problem \ref{optprob} alters to
\begin{equation}\label{opt_with_omega}
	\alpha_{N,m}^\omega := \inf_{\lambda_0, \ldots, \lambda_{N - 1}, \nu} \frac{\sum_{n = 0}^{N - 2} \lambda_n + \omega \lambda_{N-1}- \nu}{\sum_{n = 0}^{m - 1} \lambda_n}.
\end{equation}

\section{Asymptotic stability}\label{stabsec}
In this section, which extends \cite[Section 5]{G07} to varying control horizons, we show how the performance bound $\alpha=\alpha_{N,m}^\omega$ can be used in order to conclude asymptotic stability of the MPC closed loop. More precisely, we investigate the asymptotic stability of the zero set of
$l^\star$. To this end, we make the following assumption.

\begin{assumption}\label{A-ass}
	There exists a closed set $A \subset X$ satisfying:
	\begin{itemize}
		\item[(i)] For each $x \in A$ there exists $u \in U$ with $f(x, u) \in A$ and $l(x, u) = 0$, i.e., we can stay inside $A$ forever at zero cost.
		\item[(ii)] There exist $\KK_\infty$--functions	$\alpha_1$, $\alpha_2$ such that the inequality
		\begin{equation}
			\label{lbound}
			\alpha_1(\| x \|_A) \leq l^\star(x) \leq \alpha_2(\| x \|_A)
		\end{equation}
		holds for each $x \in X$ where $\| x \|_A := \min_{y \in A}\| x - y \|$. 
	\end{itemize}
\end{assumption}
This assumption assures global asymptotic stability of $A$ under the
optimal feedback for the infinite horizon problem, provided $\beta(r,
n)$ is summable. We remark that condition (ii) can be relaxed in
various ways, e.g., it could be replaced by a detectability condition
similar to the one used in \cite{GMTT05}. However, in order to keep
the presentation in this paper technically simple we will work with
Assumption \ref{A-ass}(ii) here. Our first stability result is
formulated in the following theorem. Here we say that a multistep
feedback law $\mu$ asymptotically stabilizes a set $A$ if there exists
$\tilde\beta\in\KL_0$ such that for all admissible control horizon
sequences the closed 
loop system satisfies $\| x_\mu(n) \|_A \leq \tilde{\beta}(\| x_0
\|_A, n)$. 

\begin{theorem} \label{stabthm}
Consider $\beta \in \KL_0$, $m^\star \ge 1$ and $N \geq m^\star+1$ and
a set $M\subseteq \{1,\ldots,m^\star\}$. Assume that $\alpha^\star :=
\min_{m \in M} \{ \alpha_{N,m}^\omega \} > 0$ where $\alpha_{N,m}^\omega$ denotes
the optimal value of optimization Problem \ref{optprob}. Then for each
optimal control problem \eqref{2:eq:discrete control system},
\eqref{2:eq:finite value function} satisfying the Assumptions
\ref{3:ass:controllability} and \ref{A-ass} the multistep MPC
feedback law $\mu_{N, m^\star}$ asymptotically stabilizes the set
$A$ for all admissible control horizon sequences
$(m_i)_{i\in\N_0}$. Furthermore, the function $V_N$ is a 
Lyapunov function at the transmission times $\sigma(k)$ in the sense
that 
	\begin{eqnarray}
		\label{lyapineq}
		V_N(x_{\mu_{N, m^\star}}(\sigma(k+1)))
                & \leq & 
                V_N(x_{\mu_{N, m^\star}}(\sigma(k))) \\ 
                &&  - \,\, \nonumber 
                \alpha^\star V_{m_k}(x_{\mu_{N,
                    m^\star}}(\sigma(k)))  
	\end{eqnarray}
holds for all $k \in \N_0$ and $x_0\in X$.
\end{theorem}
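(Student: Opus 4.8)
The plan is to split the argument into two parts: first establish the Lyapunov inequality \eqref{lyapineq} at the transmission times as a direct consequence of the relaxed dynamic programming machinery already developed, and then feed this inequality together with the sandwich bounds from Assumption \ref{A-ass} into a standard comparison argument to extract a $\KL_0$ estimate for $\|x_\mu(\cdot)\|_A$. Throughout I abbreviate $x_\mu := x_{\mu_{N,m^\star}}$.

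For the first part I would fix an admissible control horizon sequence and apply Corollary \ref{optcor} (equivalently Theorem \ref{4:thm:optimality}) separately for each $m \in M$. This yields, for every such $m$, the relaxed dynamic programming inequality \eqref{2:prop:m-step suboptimality estimate:eq1} with $\wV = V_N$, $\wmu = \mu_{N,m^\star}$ and constant $\alpha_{N,m}^\omega$. Since $l \ge 0$ and $\alpha^\star = \min_{m \in M}\alpha_{N,m}^\omega \le \alpha_{N,m}^\omega$, the same inequality holds uniformly with the single constant $\alpha^\star$, so the hypotheses of Proposition \ref{2:prop:m-step suboptimality estimate} are met. Evaluating \eqref{2:prop:m-step suboptimality estimate:eq1} at the state $x_\mu(\sigma(k))$ with first control horizon $m_k$, and using $\sigma(k+1) = \sigma(k) + m_k$, gives
\[
V_N(x_\mu(\sigma(k+1))) \;\le\; V_N(x_\mu(\sigma(k))) - \alpha^\star \sum_{j=0}^{m_k-1} l\bigl(x_\mu(\sigma(k)+j),\, u^\star(j)\bigr),
\]
where $u^\star$ denotes the horizon-$N$ optimal control computed at $x_\mu(\sigma(k))$. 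On $[\sigma(k),\sigma(k+1))$ the closed loop coincides with this open loop optimal trajectory; hence the displayed sum is the cost of an admissible control for the horizon-$m_k$ problem and is therefore bounded below by $V_{m_k}(x_\mu(\sigma(k)))$, which yields exactly \eqref{lyapineq}.

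For the second part I would show that $V_N$ is a genuine Lyapunov function by sandwiching it between $\KK_\infty$ functions of $\|x\|_A$. The lower bound follows from $V_N(x) \ge l^\star(x) \ge \alpha_1(\|x\|_A)$ using \eqref{lbound}, and the upper bound from Lemma \ref{VNboundlemma} together with \eqref{lbound}, giving $V_N(x) \le B_N(l^\star(x)) \le B_N(\alpha_2(\|x\|_A)) =: \alpha_3(\|x\|_A)$ with $\alpha_3 \in \KK_\infty$. Since $m_k \ge 1$ implies $V_{m_k}(x) \ge l^\star(x) \ge \alpha_1(\|x\|_A) \ge \alpha_1(\alpha_3^{-1}(V_N(x)))$, inequality \eqref{lyapineq} turns into a pure decrease condition $a_{k+1} \le a_k - \rho(a_k)$ for $a_k := V_N(x_\mu(\sigma(k)))$ and $\rho := \alpha^\star\,\alpha_1 \circ \alpha_3^{-1} \in \KK_\infty$; as $a_{k+1} \ge 0$ forces $\rho(a_k) \le a_k$, the standard comparison argument then produces a $\KL$ bound $a_k \le \beta_1(a_0,k)$ decaying to zero.

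It remains to control the state at the intermediate instants and to convert everything into a bound in terms of $\|x_0\|_A$ and $n$. Here I would use that $l^\star(x_\mu(n))$ for $\sigma(k) \le n < \sigma(k+1)$ is dominated by the running cost accumulated on the whole interval, which in turn is at most $V_N(x_\mu(\sigma(k))) = a_k$; with the lower sandwich bound this gives $\|x_\mu(n)\|_A \le \alpha_1^{-1}(a_k)$ for every such $n$. Combining the decay of $a_k$, the estimate $a_0 = V_N(x_0) \le \alpha_3(\|x_0\|_A)$, and the uniform bound $m_k \le m^\star$ on the interval lengths then yields a single $\tilde\beta \in \KL_0$, independent of the admissible sequence, with $\|x_\mu(n)\|_A \le \tilde\beta(\|x_0\|_A, n)$. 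I expect the main obstacle to be precisely this last bookkeeping step: the decrease is available only at the transmission times, so assembling the per-interval estimates into one comparison function valid at every $n$ and simultaneously uniform over all admissible control horizon sequences requires care in the construction of $\tilde\beta$, whereas the derivation of \eqref{lyapineq} itself is a fairly direct specialization of the relaxed dynamic programming inequality.
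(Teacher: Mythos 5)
Your proposal is correct, and its skeleton coincides with the paper's own proof: you obtain \eqref{lyapineq} exactly as the paper does (Corollary \ref{optcor} applied for each $m\in M$, passing to the uniform constant $\alpha^\star$ via nonnegativity of $l$, and bounding the accumulated running cost from below by $V_{m_k}$), and the decay at the transmission times comes from the same sandwich bounds $\alpha_1(\|x\|_A)\le V_N(x)\le B_N(\alpha_2(\|x\|_A))$ combined with a standard comparison construction (the paper cites \cite{NeTe04} for this step). The one place where you genuinely deviate is the control of the intermediate instants $\sigma(k)< n<\sigma(k+1)$: you note that $l^\star(x_\mu(n))$ is dominated by one of the (nonnegative) terms of the optimal finite-horizon cost computed at $x_\mu(\sigma(k))$, hence $\alpha_1(\|x_\mu(n)\|_A)\le l^\star(x_\mu(n))\le V_N(x_\mu(\sigma(k)))$. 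The paper instead bounds the Lyapunov function itself at intermediate times, proving $V_N(x_\mu(\sigma(k)+p))\le 2\,V_N(x_\mu(\sigma(k)))$ by concatenating the remaining open-loop control with an optimal tail (a dynamic-programming argument using $V_{N-m_k+p}\le V_N$ together with \eqref{lyapineq}), and only then applies $\alpha_1^{-1}$. Your variant is simpler and avoids the factor $2$, since for asymptotic stability one only needs a bound on the distance to $A$ rather than on $V_N$; the paper's variant yields the slightly stronger byproduct that $V_N$ itself grows by at most a factor of $2$ between transmissions. In either case the final bookkeeping --- at least $\lfloor (n-m^\star)/m^\star\rfloor$ transmission intervals have elapsed by time $n$, followed by the usual overbounding to produce a genuine $\KL$-function, e.g.\ $\tilde\beta(r,n)=\alpha_1^{-1}(2\rho(B_N(\alpha_2(r)),\lfloor (n-m^\star)/m^\star\rfloor))+re^{-n}$ in the paper --- goes through exactly as you outline, so the step you flag as the main obstacle is indeed only bookkeeping and not a gap.
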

\begin{proof}
	From \eqref{lbound} and Lemma \ref{VNboundlemma} we
        immediately obtain the inequality 
	\begin{equation}
		 \label{lyapbound}
		\alpha_1(\| x \|_A) \leq V_N(x) \leq B_N(\alpha_2(\| x \|_A)).
	\end{equation}
	Note that $B_N \circ \alpha_2$ is again a
        $\KK_\infty$--function. The stated Lyapunov inequality
        \eqref{lyapineq} follows immediately from the definition of
        $\alpha^\star$ and \eqref{2:prop:m-step suboptimality
          estimate:eq1} which holds according to Corollary
        \ref{optcor} for all $m \in M$. Again, using \eqref{lbound} we
        obtain $V_m(x) \geq \alpha_1(\| x \|_A)$ and thus a standard
        construction (see, e.g., \cite{NeTe04}) yields a
        $\KL$--function $\rho$ for which the inequality
        $V_N(x_{\mu_{N, m^\star}}(\sigma(k))) \leq \rho
        (V_N(x), k)\leq \rho
        (V_N(x), \lfloor \sigma(k)/m^\star\rfloor)$ holds. In
        addition, using the definition of 
        $\mu_{N, m^\star}$, for $p = 1, \ldots,
        m_k-1$, $k\in\N_0$, and abbreviating $x(n)=x_{\mu_{N, m^\star}}(n)$
        we obtain 
	\begin{eqnarray*} 
		&& V_N(x(\sigma(k)+p)) \\
		&& \le  \sum_{n = \sigma(k)+p}^{\sigma(k+1)-1} l(x(n),
                \mu_{N, m^\star}(x(\varphi(n)), n-\varphi(n)))\\ 
		&& \quad + \,\, V_{N - m_k + p}(x(\sigma(k+1))) \\
		&& \le  \sum_{n = \sigma(k)}^{\sigma(k+1)-1} l(x(n),
                \mu_{N, m^\star}(x(\varphi(n)), n-\varphi(n)))\\ 
		&& \quad + \,\, V_{N - m_k + p}(x(\sigma(k+1))) \\
		&& \leq  \,\, V_N(x(\sigma(k))) + V_N(x(\sigma(k+1)))\,\,
                \leq \,\, 2 V_N(x(\sigma(k))) 
	\end{eqnarray*}
	where we have used \eqref{lyapineq} in the last inequality. Hence, we obtain the estimate $V_N(x_{\mu_{N,
            m^\star}}(n)) \leq 2 \rho(V_N(x),\lfloor
        \varphi(n)/m^\star\rfloor)$ which implies 
	\begin{eqnarray*}
		\|x_{\mu_{N,m^\star}}(n)\|_A & \leq & \alpha_1^{-1}(V_N(x_{\mu_{N,m^\star}}(n))) \\
		& \leq & \alpha_1^{-1}(2\rho(V_N(x),\lfloor
        \varphi(n)/m^\star\rfloor)) \\
		& \leq & \alpha_1^{-1}(2 \rho(B_N( \alpha_2(\| x
                \|_A)), \lfloor
        (n-m^\star)/m^\star\rfloor)) 
	\end{eqnarray*}
	and thus asymptotic stability with $\KL$-function given by, e.g., $\tilde{\beta}(r, n) = \linebreak[0] \alpha_1^{-1}(2 \rho(B_N( \alpha_2(r)),\linebreak[0] \lfloor
        (n-m^\star)/m^\star\rfloor)) + r e^{-n}$.
\end{proof}
\begin{remark}\label{rem:tight}
(i) For the ``classical'' MPC case $m^\star=1$ and $\beta$ satisfying 
\eqref{3:eq:submultiplicativity}
it is shown in
\cite[Theorem 5.3]{G07} that the criterion from Theorem \ref{stabthm}
is tight in the sense that if $\alpha^\star<0$ holds then there exists
a control system which satisfies Assumption \ref{3:ass:controllability}
but which is not stabilized by the MPC scheme. We conjecture that the
same is true for the general case $m^\star \ge 2$.

(ii) Note that in Theorem \ref{stabthm} we use a criterion for arbitrary but fixed $m\in M$ in order to conclude asymptotic stability for time varying $m_i\in M$. This is possible since our proof yields $V_N$ as a common Lyapunov function for all $m\in M$, cf.\ also \cite[Section 2.1.2]{Lib03}.
\end{remark}

\section{Calculation of $\alpha_{N,m}^\omega$}\label{alphasec}

In this section we continue the analysis of Problem \ref{optprob} in the extended version \eqref{opt_with_omega}, i.e., including an additional terminal weight. Although this is an optimization problem of much lower complexity than the original MPC optimization problem, still, it is in general nonlinear. However, it becomes a linear program if $\beta(r,n)$ (and thus $B_k(r)$ from \eqref{3:eq:definition B_N}) is linear in $r$.
\begin{lemma}\label{linlemma}
	Let $\beta(r, t)$ be linear in its first argument. Then Problem \ref{optprob} yields the same optimal value $\alpha_{N,m}^\omega$ as 
	\begin{equation}\label{minexplin}
		\min_{\lambda_0, \lambda_1, \ldots, \lambda_{N-1}, \nu} \sum_{n = 0}^{N - 2} \lambda_n + \omega \lambda_{N-1} - \nu
	\end{equation}
	subject to the (now linear) constraints \eqref{4:prop:necessary conditions:eq1}, \eqref{4:prop:necessary conditions:eq2} with $B_N(k)$ from \eqref{3:eq:definition B_N_omega} and
	\begin{equation}\label{posval_lin}
		\lambda_0, \ldots, \lambda_{N - 1}, \nu \geq 0, \quad \sum_{n = 0}^{m - 1} \lambda_n = 1.
	\end{equation}
\end{lemma}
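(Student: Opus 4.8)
The plan is to recognize \eqref{opt_with_omega} as a linear--fractional program and to reduce it to the linear program \eqref{minexplin} by normalizing the denominator, the only genuine subtlety being the passage from the strict sign constraints $\lambda_n,\nu>0$ in Problem \ref{optprob} to the relaxed ones $\lambda_n,\nu\ge0$ in \eqref{minexplin}.

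First I would record the consequences of linearity. Because $\beta(r,t)$ is linear in $r$, the definition \eqref{3:eq:definition B_N_omega} makes every $B_k(r)$ linear in $r$, say $B_k(r)=b_k r$ with constants $b_k\ge0$. Substituting this into \eqref{4:prop:necessary conditions:eq1} and \eqref{4:prop:necessary conditions:eq2} turns both constraint families into \emph{linear homogeneous} inequalities in $(\lambda_0,\dots,\lambda_{N-1},\nu)$: feasibility is preserved under the scaling $(\lambda,\nu)\mapsto(t\lambda,t\nu)$ for every $t>0$, as are the sign constraints. Since the numerator $\sum_{n=0}^{N-2}\lambda_n+\omega\lambda_{N-1}-\nu$ and the denominator $\sum_{n=0}^{m-1}\lambda_n$ are likewise homogeneous of degree one, the objective ratio in \eqref{opt_with_omega} is invariant under this scaling.

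The normalization then yields one inequality directly. Given any point admissible for Problem \ref{optprob}, its denominator $s:=\sum_{n=0}^{m-1}\lambda_n$ is strictly positive (each $\lambda_n>0$ and $m\ge1$); scaling by $t=1/s$ produces an admissible point with $\sum_{n=0}^{m-1}\lambda_n=1$ and the same ratio, which now equals the numerator alone, i.e.\ the objective of \eqref{minexplin}. As every such normalized point is feasible for the relaxed program \eqref{minexplin}, taking infima gives that $\alpha_{N,m}^\omega$ is at least the value of \eqref{minexplin}.

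The reverse inequality is the main obstacle, because a minimizer of \eqref{minexplin} may have zero components while Problem \ref{optprob} forbids them. I would handle this by a perturbation argument. Using compactness of the (bounded, via \eqref{4:prop:necessary conditions:eq1} with $k=0$ under the normalization, which gives $\sum_{n=0}^{N-1}\lambda_n\le b_N\lambda_0\le b_N$) feasible set of \eqref{minexplin}, the minimum is attained at some $(\bar\lambda,\bar\nu)$ with value $\alpha^\star$. I would then exhibit a strictly positive feasible reference point: for small $a>0$ the choice $\hat\lambda_n=a^n$ satisfies \eqref{4:prop:necessary conditions:eq1} (the geometric sum on the left tends to $1\le b_{N-k}$ as $a\to0$, using $\beta(r,0)\ge r$), and a sufficiently small $\hat\nu>0$ satisfies \eqref{4:prop:necessary conditions:eq2}; after rescaling to normalize its denominator this point is strictly positive, feasible, and normalized. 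Forming the convex combination $(1-\theta)(\bar\lambda,\bar\nu)+\theta(\hat\lambda,\hat\nu)$ keeps it feasible (convexity of a linear feasible set), normalized, and strictly positive for $\theta\in(0,1]$, hence admissible for Problem \ref{optprob}; by linearity its objective equals $(1-\theta)\alpha^\star+\theta\,(\text{value at }\hat\lambda,\hat\nu)$, which tends to $\alpha^\star$ as $\theta\to0$. This shows $\alpha_{N,m}^\omega\le\alpha^\star$, completing the equality of optimal values.
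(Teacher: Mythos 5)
Your proof is correct. Note first that the paper itself contains no proof of Lemma \ref{linlemma}: it refers to \cite[Remark 4.3 and Lemma 4.6]{G07}, observing only that the argument extends to $\omega\ge 1$. Your argument is the natural self-contained version of that reduction, and it splits into the two halves one would expect: (i) linearity of $B_k$ makes the constraints \eqref{4:prop:necessary conditions:eq1}, \eqref{4:prop:necessary conditions:eq2} and both numerator and denominator of \eqref{opt_with_omega} positively homogeneous, so normalizing the denominator maps admissible points of Problem \ref{optprob} into the feasible set of \eqref{minexplin} with unchanged value, which gives one inequality; (ii) the converse, where the real content lies, since an LP minimizer may sit on the boundary $\lambda_n=0$ or $\nu=0$, which Problem \ref{optprob} forbids. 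Your treatment of (ii) --- exhibit a strictly positive feasible point and slide an LP minimizer toward it along a segment, using convexity of the feasible set, linearity of the objective, and invariance of the normalization --- is exactly the right way to close this; a bare ``pass to the closure'' argument would be gappy, because a polyhedron defined by nonnegativity constraints need not be the closure of its strictly positive part.

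One step should be sharpened. Your Slater point $\hat\lambda_n=a^n$ satisfies \eqref{4:prop:necessary conditions:eq1} for small $a>0$ only if $\gamma_{N-k}:=B_{N-k}(r)/r>1$ holds \emph{strictly} for every $k=0,\dots,N-2$; the phrase ``tends to $1\le b_{N-k}$'' is not sufficient when $b_{N-k}=1$, since the left-hand side exceeds $1$ for every $a>0$. In that boundary case (admissible within $\KL_0$, e.g.\ $c_0=1$ and $c_1=\dots=c_{N-k-1}=0$) no strictly positive point is feasible at all, so Problem \ref{optprob} has empty admissible set and the lemma itself degenerates --- i.e.\ your construction fails only where the statement does. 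The strict inequality $\gamma_j>1$ is precisely the implicit standing assumption of the paper (cf.\ the assertion $d_i<0$ preceding Lemma \ref{appendix_technical_lemma:1}, and the factors $\gamma_i-1$ in Theorem \ref{alpha_formula_thm}), and it is what you actually need, rather than the weaker consequence $\beta(r,0)\ge r$ of Assumption \ref{3:ass:controllability} that you invoke. So state it explicitly (equivalently: assume Problem \ref{optprob} is feasible, which is no loss since otherwise there is nothing to prove), and the rest of your argument goes through verbatim.
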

For a proof we refer to \cite[Remark 4.3 and Lemma 4.6]{G07}, observing that this proof is easily extended to $\omega \geq 1$.
\begin{proposition}\label{Alpha_Formula_LP_Lemma}
	Let $\beta(\cdot,\cdot)$ be linear in its first argument and define $\gamma_k:=B_k(r)/r$. Then the optimal value of Problem \ref{optprob} equals the optimal value of the optimization problem
	\begin{equation*}
		\min_{\lambda}\quad 1 - (\gamma_{m+1} - \omega)\, \lambda_{N-1}
	\end{equation*}
	subject to $\lambda = (\lambda_1,\ldots,\lambda_{N-1})^T \geq 0$ componentwise and the linear constraints
	\begin{eqnarray}
		\gamma_N \sum\limits_{n=1}^{m-1} \lambda_n + \sum_{n=m}^{N-2} \lambda_n + \rlap{$\omega$}\phantom{\gamma_{m+1}} \lambda_{N-1} & \leq & \gamma_N -1 \label{Alpha_Formula_LP_Lemma_Restriction1} \\
		\sum_{n=j}^{N-2} \lambda_n - \gamma_{N-j\phantom{+m}}\, \lambda_j + \rlap{$\omega$}\phantom{\gamma_{m+1}} \lambda_{N-1} & \leq & 0 \qquad\qquad(j=\phantom{m} 1,\ldots,N-2) \label{Alpha_Formula_LP_Lemma_Restriction2} \\
		\sum_{n=j}^{N-2} \lambda_n - \gamma_{N-j+m}\, \lambda_j + \gamma_{m+1} \lambda_{N-1} & \leq & 0 \qquad\qquad (j=\phantom{1} m,\ldots,N-2). \label{Alpha_Formula_LP_Lemma_Restriction3}
	\end{eqnarray}
\end{proposition}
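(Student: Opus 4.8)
The plan is to derive the asserted linear program from the one in Lemma~\ref{linlemma} by explicitly eliminating the two auxiliary variables $\lambda_0$ and $\nu$, exploiting the linearity $B_k(r)=\gamma_k r$ throughout. Under the terminal weight $\omega$ the constraint \eqref{4:prop:necessary conditions:eq1} takes the form $\sum_{n=k}^{N-2}\lambda_n+\omega\lambda_{N-1}\le\gamma_{N-k}\lambda_k$ for $k=0,\dots,N-2$, and \eqref{4:prop:necessary conditions:eq2} takes the form $\nu\le\sum_{n=m}^{m+j-1}\lambda_n+\gamma_{N-j}\lambda_{m+j}$ for $j=0,\dots,N-m-1$, while the objective to be minimised is $\sum_{n=0}^{N-2}\lambda_n+\omega\lambda_{N-1}-\nu$ subject to $\sum_{n=0}^{m-1}\lambda_n=1$ and nonnegativity.

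First I would eliminate $\lambda_0$ by solving the normalisation for $\lambda_0=1-\sum_{n=1}^{m-1}\lambda_n$. Since $\lambda_0$ occurs in \eqref{4:prop:necessary conditions:eq1} only for $k=0$, the constraints with $k\ge 1$ are already exactly \eqref{Alpha_Formula_LP_Lemma_Restriction2}, whereas inserting the expression for $\lambda_0$ into the $k=0$ constraint and collecting terms yields \eqref{Alpha_Formula_LP_Lemma_Restriction1}. The same substitution turns the objective into $1+\sum_{n=m}^{N-2}\lambda_n+\omega\lambda_{N-1}-\nu$. I would also remark that the dropped sign constraint $\lambda_0\ge 0$ is automatically implied by \eqref{Alpha_Formula_LP_Lemma_Restriction1}, since $\gamma_N\ge\beta(1,0)\ge 1$ forces $\sum_{n=1}^{m-1}\lambda_n\le 1-1/\gamma_N\le 1$.

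Next I would eliminate $\nu$. As the objective is decreasing in $\nu$, at the optimum $\nu$ equals the smallest of its upper bounds in \eqref{4:prop:necessary conditions:eq2}. The bound belonging to the largest index $j=N-m-1$ is $\sum_{n=m}^{N-2}\lambda_n+\gamma_{m+1}\lambda_{N-1}$; substituting this particular value cancels the sum $\sum_{n=m}^{N-2}\lambda_n$ in the objective and leaves precisely $1-(\gamma_{m+1}-\omega)\lambda_{N-1}$. The requirement that this bound really is the smallest, i.e. that it does not exceed the bounds for $j<N-m-1$, is, after the reindexing $j\mapsto m+j$, exactly the family \eqref{Alpha_Formula_LP_Lemma_Restriction3}. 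This already yields one of the two inequalities between the optimal values: every point feasible for the reduced program lifts, via $\nu:=\sum_{n=m}^{N-2}\lambda_n+\gamma_{m+1}\lambda_{N-1}$, to a feasible point of the original program with the same objective value, so the original optimum is no larger than the reduced one.

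The main obstacle is the reverse inequality, namely that at an optimum of the original program the $j=N-m-1$ bound may be assumed active; only then is the optimal $\lambda$ feasible for the reduced program and the two optimal values coincide. I would prove this by a perturbation argument in the single coordinate $\lambda_{N-1}$. Writing $g_j(\lambda)$ for the value the objective attains when $\nu$ is set to the $j$-th bound, the eliminated problem reads $\min_\lambda\max_j g_j(\lambda)$, where $g_{N-m-1}=1-(\gamma_{m+1}-\omega)\lambda_{N-1}$ and every $g_j$ with $j<N-m-1$ depends on $\lambda_{N-1}$ only through the term $+\omega\lambda_{N-1}$. Hence, if $\gamma_{m+1}>\omega$ and $g_{N-m-1}$ were strictly below the maximum at an optimum with $\lambda_{N-1}>0$, then decreasing $\lambda_{N-1}$ slightly would strictly lower all competing values $g_j$, $j<N-m-1$, while keeping \eqref{Alpha_Formula_LP_Lemma_Restriction1} and \eqref{Alpha_Formula_LP_Lemma_Restriction2} satisfied, contradicting optimality; thus $g_{N-m-1}$ is active, i.e. \eqref{Alpha_Formula_LP_Lemma_Restriction3} holds at the optimum. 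The degenerate situations $\lambda_{N-1}=0$ and $\gamma_{m+1}\le\omega$ force the reduced objective to equal $1$ and are disposed of by a short separate check exhibiting a feasible point of value $1$. Combining the two inequalities gives the claimed identity of optimal values.
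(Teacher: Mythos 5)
Your proposal is correct and follows essentially the same route as the paper's proof: both start from the LP of Lemma \ref{linlemma}, eliminate $\lambda_0$ via the normalization $\sum_{n=0}^{m-1}\lambda_n=1$ and $\nu$ via the $j=N-m-1$ instance of \eqref{4:prop:necessary conditions:eq2}, carry out the same constraint bookkeeping to arrive at \eqref{Alpha_Formula_LP_Lemma_Restriction1}--\eqref{Alpha_Formula_LP_Lemma_Restriction3}, and justify the elimination of $\nu$ by a downward perturbation of $\lambda_{N-1}$ showing that this constraint may be taken active (equivalently, that its bound is the minimal one) at the optimum. The only difference is in how the degenerate situation is closed out: you dispose of the cases $\lambda_{N-1}=0$ and $\gamma_{m+1}\le\omega$ by exhibiting feasible points of value $1$ on both sides, whereas the paper iterates the zero-forcing argument through $\lambda_{N-2},\ldots,\lambda_m$ and then derives activeness of the constraint as a contradiction --- the substance is the same.
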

\begin{proof}
	We proceed from the linear optimization problem stated in Lemma \ref{linlemma} and show that Inequality \eqref{4:prop:necessary conditions:eq2}, $j=N-m-1$, is active in the optimum. To this end, we assume the opposite and deduce a contradiction. $\lambda_{N-1}>0$ allows -- due to the continuity of $B_{m+1}(\lambda_{N-1})$ with respect to $\lambda_{N-1}$ -- for reducing this variable without violating Inequality \eqref{4:prop:necessary conditions:eq2}, $j=N-m-1$. As a consequence the objective function decreases strictly whereas all other constraints remain valid. Hence, $\lambda_{N-1}=0$ holds. Since $\lambda_{N-2} \leq B_{m+2}(\lambda_{N-2})$ Inequalities \eqref{4:prop:necessary conditions:eq2}, $j=N-m-2$, and \eqref{4:prop:necessary conditions:eq1}, $k=N-2$, hold trivially. Analogously to $\lambda_{N-1}>0$ we show $\lambda_{N-2}=0$. Iterative application of this observation provides $\lambda_m=\ldots,\lambda_{N-1}=0$. But then the right hand side of \eqref{4:prop:necessary conditions:eq2}, $j=N-m-1$, is equal to zero which -- in combination with $\nu \geq 0$ -- leads to the claimed contradiction. 

	This enables us to treat Inequality \eqref{4:prop:necessary conditions:eq2}, $j=N-m-1$, as an equality constraint. In conjunction with the non-negativity conditions imposed on $\lambda_m,\ldots,\lambda_{N-1}$ this ensures $\nu \geq 0$. Moreover, $\lambda_0 \geq 0$ is satisfied for all feasible points due to Inequality \eqref{4:prop:necessary conditions:eq1}, $k=0$, and the linearity of $B_N$. Next, we utilize Equalities \eqref{posval_lin} and \eqref{4:prop:necessary conditions:eq2}, $j=N-m-1$, in order to eliminate $\nu$ and $\lambda_0$ from the considered optimization problem. Using these equalities and the the definition of $\gamma_{m+1}$ converts the objective function from Lemma \ref{Alpha_Formula_LP_Lemma} into the desired form. Furthermore, Equality \eqref{posval_lin} provides the equivalence of Inequalities \eqref{4:prop:necessary conditions:eq1}, $k=0$, and \eqref{Alpha_Formula_LP_Lemma_Restriction1}. Taking Equality \eqref{4:prop:necessary conditions:eq2}, $j=N-m-1$, into account yields
	\begin{equation*}
		\sum_{n=m+j}^{N-2} \lambda_n + \gamma_{m+1} \lambda_{N-1} - \gamma_{N-j}\lambda_{m+j} \leq 0
	\end{equation*}
	for \eqref{4:prop:necessary conditions:eq2}, $j=0,\ldots,N-m-2$. Shifting the control variable $j$ shows the equivalence to \eqref{Alpha_Formula_LP_Lemma_Restriction3}, $j=m,\ldots,N-2$. Paraphrasing \eqref{4:prop:necessary conditions:eq1} provides \eqref{Alpha_Formula_LP_Lemma_Restriction2} for $k=1,\ldots,N-2$.
\end{proof}

Before we proceed, we formulate Problem \ref{Alpha_Formula_Relaxed_Problem} by dropping Inequalities \eqref{Alpha_Formula_LP_Lemma_Restriction2}, $j=m,\ldots,N-2$. The solution of this relaxed (optimization) problem paves the way for dealing with Problem \ref{optprob}.
\begin{problem}\label{Alpha_Formula_Relaxed_Problem}
	Minimize $1 - (\gamma_{m+1} - \omega)\, \lambda_{N-1}$ subject to $\lambda = (\lambda_1,\ldots,\lambda_{N-1})^T \geq 0$ componentwise and $A \lambda \leq \bar{b}$, where
	\begin{equation*}
		A:=\left( \begin{array} {cccccc}
			a_1    & a_2    & \dots  & a_{N-2} & \omega  \\
			d_1    & 1      & \dots  & 1       & b_1     \\
			0      & d_2    & \ddots & \vdots  & \vdots  \\
			\vdots & \ddots & \ddots & 1       & b_{N-3} \\
			0      & \dots  & 0      & d_{N-2} & b_{N-2} \\
		\end{array} \right) \quad\mbox{and}\quad \bar{b}:=\left( \begin{array}{c}
			\gamma_{N} - 1 \\
			0 \\
			\vdots \\
			0 \\
			0 \\
		\end{array} \right)
	\end{equation*}
	with
	{\begin{equation*}
		a_j = \left\{ \begin{array}{cl}
			\gamma_N & \mbox{for $j<m$} \\
			1 & \mbox{otherwise}
		\end{array} \right. \quad
		b_j = \left\{ \begin{array}{cl}
			\omega & \mbox{for $j<m$} \\
			\gamma_{m+1} & \mbox{otherwise}
		\end{array} \right. \quad
		d_j = \left\{ \begin{array}{cl}
			1- \gamma_{N-j} & \mbox{for $j<m$} \\
			1- \gamma_{N-j+m} & \mbox{otherwise}
		\end{array} \right.
	\end{equation*}}
\end{problem}
\begin{theorem}\label{alpha_formula_thm}
	Let $\beta(\cdot,\cdot)$ be linear in its first argument and satisfy \eqref{3:eq:submultiplicativity}. Then the optimal value ${\alpha} = {\alpha}_{N,m}^\omega$ of Problem \ref{optprob} for given optimization horizon $N$, control horizon $m$, and weight $\omega$ on the final term satisfies ${\alpha}_{N,m}^\omega=1$ if and only if $\omega \geq \gamma_{m+1}$. Otherwise, we get
	{\begin{equation}\label{alpha_formula}
		{\alpha}_{N,m}^\omega =	1 - \frac {(\gamma_{m+1}-\omega) \prod\limits_{i=m+2}^N (\gamma_i - 1) \prod\limits_{i=N-m+1}^{N} (\gamma_i - 1)} {\left(\prod\limits_{i=m+1}^N \gamma_{i} - (\gamma_{m+1}-\omega) \prod\limits_{i=m+2}^{N} (\gamma_{i}-1)\right) \left( \prod\limits_{i=N-m+1}^N \gamma_{i} - \prod\limits_{i=N-m+1}^N (\gamma_{i}-1) \right)}.
	\end{equation}}
\end{theorem}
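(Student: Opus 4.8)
The plan is to start from the linear program of Proposition~\ref{Alpha_Formula_LP_Lemma}, whose optimal value equals $\alpha_{N,m}^\omega$ by Lemma~\ref{linlemma}, and to determine its minimizer explicitly. Since the objective $1-(\gamma_{m+1}-\omega)\lambda_{N-1}$ depends only on $\lambda_{N-1}$, everything hinges on the sign of $\gamma_{m+1}-\omega$ and on how large $\lambda_{N-1}$ may be made.

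First I would dispose of the boundary case. If $\omega\ge\gamma_{m+1}$ the coefficient $-(\gamma_{m+1}-\omega)$ is nonnegative, so the objective is minimized by $\lambda_{N-1}=0$; the all-zero vector $\lambda=0$ is feasible for \eqref{Alpha_Formula_LP_Lemma_Restriction1}--\eqref{Alpha_Formula_LP_Lemma_Restriction3} (which needs only $\gamma_N\ge1$, itself immediate from Assumption~\ref{3:ass:controllability} at $n=0$, giving $\beta(r,0)\ge r$) and attains the value $1$, while the objective is $\ge1$ on the whole feasible set. Hence $\alpha_{N,m}^\omega=1$. This proves one direction of the claimed equivalence; the converse will follow from the formula derived below, which is strictly less than $1$ exactly when $\omega<\gamma_{m+1}$.

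For the main case $\omega<\gamma_{m+1}$ we must \emph{maximize} $\lambda_{N-1}$, and here I would work first with the relaxed Problem~\ref{Alpha_Formula_Relaxed_Problem}, in which the couplings \eqref{Alpha_Formula_LP_Lemma_Restriction2} for $j=m,\dots,N-2$ have been discarded and the constraint matrix $A$ is, apart from its first row, upper triangular with nonpositive diagonal entries $d_j$ (nonpositive because the relevant $\gamma_i\ge1$ for $i\ge2$). The maximization forces the triangular rows $2,\dots,N-1$ to be active: since all first-row coefficients $a_j>0$, enlarging $\lambda_{N-1}$ is only possible by making $\lambda_1,\dots,\lambda_{N-2}$ as small as the remaining constraints permit, and those lower bounds are exactly the triangular equalities. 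Solving this triangular system from $\lambda_{N-2}$ upward expresses each $\lambda_j$ as a multiple $c_j\lambda_{N-1}$, the constants $c_j$ telescoping into products of the ratios $(\gamma_i-1)$ and $\gamma_i$; the two index blocks $j\ge m$ (diagonal $1-\gamma_{N-j+m}$) and $j<m$ (diagonal $1-\gamma_{N-j}$) produce the two product factors visible in \eqref{alpha_formula}. Activating the first row then pins down $\lambda_{N-1}^\star=(\gamma_N-1)/(\sum_j a_jc_j+\omega)$, and substituting into $1-(\gamma_{m+1}-\omega)\lambda_{N-1}^\star$ yields, after simplification, the right-hand side of \eqref{alpha_formula}. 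Carrying out this telescoping and collecting it into the stated double product is the most computation-heavy step.

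The final and conceptually decisive step is to verify that the relaxed minimizer $\lambda^\star$ is in fact feasible for the full problem, i.e.\ satisfies the discarded inequalities \eqref{Alpha_Formula_LP_Lemma_Restriction2} for $j=m,\dots,N-2$. Because the relaxation enlarges the feasible set, its optimal value is a lower bound for $\alpha_{N,m}^\omega$; once $\lambda^\star$ is shown admissible for Problem~\ref{optprob} it is therefore also optimal there and the formula follows. Using that \eqref{Alpha_Formula_LP_Lemma_Restriction3} is active at $\lambda^\star$, the discarded inequality for index $j$ reduces to $(\gamma_{N-j+m}-\gamma_{N-j})\,\lambda_j^\star\le(\gamma_{m+1}-\omega)\,\lambda_{N-1}^\star$. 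I expect this to be the main obstacle: it is a genuine inequality among the $\gamma_k$ weighted by the explicit constants $c_j$, and it is precisely here that submultiplicativity \eqref{3:eq:submultiplicativity} --- equivalently the subconvolution relations it imposes on the $\gamma_k$ --- is needed to control the increment $\gamma_{N-j+m}-\gamma_{N-j}$ against the proportionality factors and close the estimate. Together with the nonnegativity $c_j\ge0$ (again from $\gamma_i\ge1$ for $i\ge2$), this completes the identification of $\alpha_{N,m}^\omega$ with \eqref{alpha_formula}.
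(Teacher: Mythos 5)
Your proposal follows essentially the same route as the paper's own proof: the boundary case $\omega\ge\gamma_{m+1}$ via $\lambda=0$, then solving the relaxed Problem~\ref{Alpha_Formula_Relaxed_Problem} with all constraints active (the paper formalizes your activeness argument in Lemma~\ref{appendix_lgs}), back-substituting the triangular system to express each $\lambda_j$ as a multiple of $\lambda_{N-1}$, and finally verifying the dropped constraints \eqref{Alpha_Formula_LP_Lemma_Restriction2} via exactly the inequality $(\gamma_{N-j+m}-\gamma_{N-j})\lambda_j\le(\gamma_{m+1}-\omega)\lambda_{N-1}$, where submultiplicativity enters (the paper's Lemma~\ref{Appendix_Lemma_Inequalities}). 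The plan is correct and matches the paper's argument step for step.
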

\begin{proof}
	We have shown that the linear optimization problem stated in Proposition \ref{Alpha_Formula_LP_Lemma} yields the same optimal value as Problem \ref{optprob} for $\mathcal{KL}_0$-functions which are linear in their first argument. Technically, this is posed as a minimization problem. Taking the restriction $\lambda_{N-1} \geq 0$ into account leads to the determinable question whether the coefficient of $\lambda_{N-1}$ is positive or not. As a consequence, the aim is either minimizing or maximizing $\lambda_{N-1}$. In the first case, i.e., $\gamma_{m+1} - \omega \leq 0$, choosing $\lambda_1=\ldots=\lambda_{N-1} = 0$ solves the considered task and provides the optimal value $\alpha_{N,m}^\omega = 1$.

	In order to prove the assertion we solve the relaxed Problem \ref{Alpha_Formula_Relaxed_Problem} and show that its optimum is also feasible for Problem \ref{optprob}. Suppose that $\lambda_{m+1} - \omega > 0$ holds, then Lemma \ref{appendix_lgs} shows the optimum's crucial characteristic to satisfy the linear system of equations $A \lambda = \bar{b}$ with $A$ and $\bar{b}$ from Problem \ref{Alpha_Formula_Relaxed_Problem}. We proceed by deriving formulae for $\lambda_{N-2},\ldots,\lambda_1$ depending (only) on $\lambda_{N-1}$. These allow for an explicit calculation of $\lambda_{N-1}$ from $A_1 \lambda = \bar{b}_1$. To this end, define $\delta_i :=-d_i > 0$ and begin with showing the equality
	\begin{equation}\label{Alpha_Formula_Representation_Lambda_m_bis_N-2}
		\lambda_{N-1-i} = \left( \prod_{j=1}^{i-1} ( 1 + \delta_{N-1-j} )/\delta_{N-1-j} \right) \gamma_{m+1} \lambda_{N-1} / \delta_{N-1-i}
	\end{equation}
	for $i=1,\ldots,N-1-m$ by induction which is obvious for $i=1$. Thus, we continue with the induction step using Lemma \ref{appendix_technical_lemma:1}:
	{\small\begin{eqnarray*}
		\lambda_{N-1-i} & = & \frac {1}{\delta_{N-1-i}} \left[ \gamma_{m+1} \lambda_{N-1} + \sum_{k=1}^{i-1} \lambda_{N-1-k} \right] \stackrel{I.A.}{=} \frac {\gamma_{m+1} \lambda_{N-1}}{\delta_{N-1-i}} \left[ 1 + \sum_{k=1}^{i-1} \frac {\prod_{j=1}^{k-1} ( 1 + \delta_{N-1-j} )} {\prod_{j=1}^k \delta_{N-1-j}} \right] \\
		& = & \frac {\gamma_{m+1} \lambda_{N-1}} {\prod_{j=1}^i \delta_{N-1-j}} \sum_{k=0}^{i-1} \left( \prod_{j=1}^{k-1} (1+\delta_{N-1-j}) \prod_{j=k+1}^{i-1} \delta_{N-1-j} \right)
		\stackrel{\eqref{appendix_technical_lemma:1_eq1}} {=} \frac {\gamma_{m+1} \prod_{j=1}^{i-1} ( 1 + \delta_{N-1-j} )} {\prod_{j=1}^i \delta_{N-1-j}} \lambda_{N-1}.
	\end{eqnarray*}}
	Similarly, in consideration of \eqref{appendix_technical_lemma:1_eq1} applied with $N-1 = m$ one obtains the representation $\lambda_{m-i} = \left( \prod_{j=1}^{i-1} (1+\delta_{m-j}) / \delta_{m-j} \right) (\omega \lambda_{N-1} + \sum_{j=m}^{N-2} \lambda_j) / \delta_{m-i}$ for $i=1,\ldots,m-1$. We consider the left hand side of $A_1 \lambda = \bar{b}_1$:
	\begin{eqnarray*}
		& & \gamma_N \sum_{i=1}^{m-1} \lambda_i + \sum_{i=m}^{N-2} \lambda_i + \omega \lambda_{N-1} = \gamma_N \sum_{i=1}^{m-1} \lambda_{m-i} + \sum_{i=1}^{N-1-m} \lambda_{N-1-i} + \omega \lambda_{N-1} \\
		& = & \left[ \gamma_N \left( \omega + \gamma_{m+1} \sum_{i=1}^{N-1-m} \frac {\prod_{j=1}^{i-1} (1+\delta_{N-1-j})}{\prod_{j=1}^i \delta_{N-1-j}} \right) \sum_{i=1}^{m-1} \frac {\prod_{j=1}^{i-1} (1+\delta_{m-j})}{\prod_{j=1}^{i} \delta_{m-j}} \right] \lambda_{N-1} \\
			& + & \left[ \gamma_{m+1} \sum_{i=1}^{N-1-m} \frac {\prod_{j=1}^{i-1} (1+\delta_{N-1-j})}{\prod_{j=1}^i \delta_{N-1-j}} + \omega \right] \lambda_{N-1} \\
		& = & \left[ \gamma_N \left( \omega + \gamma_{m+1} \sum_{i=1}^{N-1-m} \frac {\prod_{j=1}^{i-1} \gamma_{m+1+j}}{\prod_{j=1}^i (\gamma_{m+1+j}-1)} \right) \sum_{i=1}^{m-1} \frac {\prod_{j=1}^{i-1} \gamma_{N-m+j}} {\prod_{j=1}^{i} (\gamma_{N-m+j}-1)} \right] \lambda_{N-1} \\
			& + & \left[ \gamma_{m+1} \sum_{i=1}^{N-1-m} \frac {\prod_{j=1}^{i-1} \gamma_{m+1+j}}{\prod_{j=1}^i (\gamma_{m+1+j} - 1)} + \omega \right] \lambda_{N-1}
	\end{eqnarray*}
	The common denominator of this expression is $\prod_{i=1}^{N-1-m} (\gamma_{m+1+i} - 1) \prod_{i=1}^{m-1} (\gamma_{N-m+i} - 1)$ which is equal to $\prod_{i=m+2}^N (\gamma_i - 1) \prod_{i=N+1-m}^{N-1} (\gamma_i - 1)$. Thus, the nominator equals $\lambda_{N-1}$ with the coefficient
	{\footnotesize\begin{eqnarray*}
	& & \Big( \omega \prod_{i=m+2}^N (\gamma_i - 1) + \gamma_{m+1} \underbrace{\sum_{i=m+2}^N \prod_{j=m+2}^{i-1} \gamma_j \prod_{j=i+1}^N (\gamma_j - 1)}_{\stackrel {\eqref{appendix_technical_corollary:1_eq1}} = \prod_{i=m+2}^N \gamma_i - \prod_{i=m+2}^N (\gamma_i - 1)} \Big) \Big( \gamma_N \underbrace{\sum_{i=1}^{m-1} \left( \prod_{j=1}^{i-1} \gamma_{N-m+j} \prod_{j=i+1}^{m-1} (\gamma_{N-m+j} - 1) \right)}_{\stackrel {\eqref{appendix_technical_lemma:1_eq1}} = \prod_{j=N-m+1}^{N-1} \gamma_j - \prod_{j=N-m+1}^{N-1} (\gamma_j - 1)} + \prod_{i=N-m+1}^{N-1} (\gamma_i - 1) \Big) 
	\end{eqnarray*}}%
	where we used \eqref{appendix_technical_lemma:1_eq1} from Lemma \ref{appendix_technical_lemma:1} with $\delta_{N-1-j} = \gamma_{N-m+j} - 1$. Hence, taking the coefficient $(\gamma_{m+1} - \omega)$ of $\lambda_{N-1}$ in the objective function and $\bar{b}_1 = \gamma_N - 1$ into account, we obtain formula \eqref{alpha_formula} as the optimal value of Problem \ref{Alpha_Formula_Relaxed_Problem}.

	However, the assertion claims this to be the optimal value for Problem \ref{optprob} as well. In order to prove this it suffices to show that the optimum of Problem \ref{Alpha_Formula_Relaxed_Problem} satisfies the Inequalities \eqref{Alpha_Formula_LP_Lemma_Restriction2}, $j=m\ldots,N-2$. As a consequence, it solves the optimization problem stated in Proposition \ref{Alpha_Formula_LP_Lemma} which is equivalent to Problem \ref{optprob}. As a byproduct, this covers the necessity of the previously considered condition $\gamma_{m+1}-\omega \leq 0$ in order to obtain $\alpha_{N,m}^\omega = 1$.

	We perform a pairwise comparison of Inequality \eqref{Alpha_Formula_LP_Lemma_Restriction3} and \eqref{Alpha_Formula_LP_Lemma_Restriction2} for $j \in \{m,\ldots,N-2\}$ in order to show that the Inequalities \eqref{Alpha_Formula_LP_Lemma_Restriction2}, $j=m,\ldots,N-2$ are dispensable. To this end, it suffices to show
	\begin{equation}\label{Alpha_Formula_Inequality_Lemma_Proof1}
		(\gamma_{m+1}-\omega) \lambda_{N-1} \geq (\gamma_{N-j+m} - \gamma_{N-j}) \lambda_j \qquad j=m,\ldots,N-2.
	\end{equation}
	Equation \eqref{Alpha_Formula_Representation_Lambda_m_bis_N-2} characterizes the components $\lambda_j$, $j=m,\ldots,N-2$, in the optimum of Problem \ref{Alpha_Formula_Relaxed_Problem} by means of the equation $\left( \prod_{i=m+2}^{N-j+m} (\gamma_i-1) \right) \lambda_j = \gamma_{m+1} \left( \prod_{i=m+2}^{N-j+m-1} \gamma_i \right) \lambda_{N-1}$, $j = m,\ldots,N-2$. Using this representation $\lambda_j$ which (only) depends on $\lambda_{N-1}$ Inequality \eqref{Alpha_Formula_Inequality_Lemma_Proof1} is equivalent to
	\begin{equation*}
		(\gamma_{m+1} - \omega) \prod_{i=m+2}^{N-j+m} (\gamma_i - 1) \geq (\gamma_{N-j+m} - \gamma_{N-j}) \prod_{i=m+1}^{N-j+m-1} \gamma_i,\qquad j = m,\ldots,N-2.
	\end{equation*}
	Since the left hand side of this expression is equal to
	\begin{equation*}
		(\gamma_{m+1}-\omega) \prod_{i=m+2}^{N-j+m-1} (\gamma_i - 1) (c_0 - 1) + (\gamma_{m+1}-\omega) \prod_{i=m+2}^{N-j+m-1} (\gamma_i - 1) \left[ \sum_{n=1}^{N-j+m-2} c_n + \omega c_{N-j+m-1} \right],
	\end{equation*}
	$(c_0 - 1) \geq 0 $, and $(\gamma_{N-j+m} - \gamma_{N-j}) = \sum_{n=N-j-1}^{N-j+m-2} c_n + \omega c_{N-j+m-1} - \omega c_{N-j-1}$ Lemma \ref{Appendix_Lemma_Inequalities} applied for $k=1$ completes the proof.
\end{proof}
\begin{remark}
	If condition \eqref{3:eq:submultiplicativity} is not satisfied the $\alpha_{N,m}^\omega$-value which has been deduced in Theorem \ref{alpha_formula_thm} may still be used as a lower bound for the optimal value of Problem \ref{optprob} for $\mathcal{KL}_0$-functions which are linear in their first arguments, cf. Corollary \ref{convergenceNtoINFINITY}.
\end{remark}

At first glance, exponential controllability with respect to the stage costs may seem to be restrictive. However, since the stage costs can be used as a design parameter, cf. \cite[Section 7]{G07}, this includes even systems which are only asymptotically controllable. In order to illustrate this assertion we consider the control system defined by $x(n+1) = x(n) + u(n)x(n)^3$ -- which is the Euler approximation of the differential equation $\dot{x}(t) = u(t) x(t)^3$ with time step $1$ -- with $U = [-1,1]$ on $X = (-1,1) \subset \mathbb{R}$.\footnote{The state and control restrictions are necessary to preserve the characteristics of the continuous time system for the Euler approximation.} This system is asymptotically stabilizable with control function $u(\cdot) \equiv -1$, i.e., $x(n+1) = x(n) - x(n)^3$. However, it is not exponentially stabilizable. Defining
\begin{equation*}
	l(x(n),u(n)) := e^{- \frac {1}{2x(n)^2}}
\end{equation*}
for $0 < \| x(n) \| < 1$ and $l(x(n),u(n)) := \| x(n) \|$ otherwise allows for choosing $\beta(r,t) = r e^{-t/e}$, i.e., a $\mathcal{KL}$-function of type \eqref{3:eq:exponential controllability}. We have to establish the inequality
\begin{equation*}
	l(x(n+1)) = l(x(n)-x(n)^3) = e^{- \frac {1}{2x(n)^2(1-x(n)^2)^2}} \leq e^{-1} l(x(n)) = e^{-1} e^{-\frac{1}{2x(n)^2}}
\end{equation*}
which implies Assumption \ref{3:ass:controllability} inductivly and is equivalent to
\begin{equation*}
	1 \geq 2x(n)^2(1-x(n)^2)^2 + (1-x(n))^2 = 1 - 3x(n)^4 + 2x(n)^6.
\end{equation*}
Since $\| x(n) \| \leq 1$ this inequality holds. Thus, we have obtained exponential controllability with respect to suitably chosen stage costs.
\begin{remark}
	Note that Assumption \ref{3:ass:controllability} is not merely an abstract condition. Rather, in connection with Formula \eqref{alpha_formula} it can be used for analyzing differences in the MPC closed loop performance for different stage costs $l(\cdot,\cdot)$ and thus for developing design guidelines for selecting good cost functions $l(\cdot,\cdot)$. This has been carried out, for instance, for the linear wave equation with boundary control in \cite{Altmueller}, for a semilinear parabolic PDE with distributed and boundary control in \cite{AltmuellerBFG} (see also \cite{G07} for a preliminary study), and for a discrete time 2d test example in \cite{G07}.
\end{remark}

\section{Characteristics of $\alpha_{N,m}^\omega$ depending on the optimization horizon $N$}\label{alphasec1}

Theorem \ref{alpha_formula_thm} enables us to easily compute the performance bounds $\alpha_{N,m}^\omega$ which are needed in Theorem \ref{stabthm} in order to prove stability provided $\beta$ is known. However, even if $\beta$ is not known exactly, we can deduce valuable information. The following corollary is obtained by a careful analysis of the fraction in \eqref{alpha_formula}.
\begin{corollary} \label{convergenceNtoINFINITY}
	For each fixed $m$, $\beta$ of type \eqref{3:eq:exponential controllability} or \eqref{3:eq:finite time controllability} and $\omega\ge 1$ we have $\lim_{N\to\infty} \alpha_{N,m}^\omega=1$. In particular, for sufficiently large $N$ the assumptions of Theorem \ref{stabthm} hold and hence the closed loop system is asymptotically stable.
\end{corollary}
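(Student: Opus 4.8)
The plan is to write $\alpha_{N,m}^\omega = 1 - F_N$, where $F_N$ denotes the fraction appearing in \eqref{alpha_formula}, and to show $F_N \to 0$ as $N \to \infty$ for fixed $m$ and $\omega$. First I would dispose of the trivial case: by Theorem \ref{alpha_formula_thm}, whenever $\omega \geq \gamma_{m+1}$ we already have $\alpha_{N,m}^\omega = 1$, so the limit is immediate; hence I may assume $\gamma_{m+1} - \omega > 0$ and work with the explicit formula. The crucial preliminary observation is that $\gamma_k = B_k(r)/r$ converges to a finite limit $\gamma_\infty$ as $k \to \infty$: in the exponential case \eqref{3:eq:exponential controllability} one computes $\gamma_\infty = C/(1-\sigma)$, while in the finite time case \eqref{3:eq:finite time controllability} the sequence $\gamma_k$ is eventually constant. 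In both cases summability of $\beta(r,\cdot)$ gives $\gamma_\infty < \infty$, and since $\gamma_\infty > 1$ in the nondegenerate situation (indeed $\gamma_\infty = C/(1-\sigma) > 1$ always in the exponential case), the factors $\gamma_i - 1$ are eventually bounded away from zero.

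Next I would separate the $N$-dependent products in \eqref{alpha_formula} according to their lengths. The two products $\prod_{i=N-m+1}^N(\gamma_i - 1)$ and $\prod_{i=N-m+1}^N \gamma_i$ each contain exactly $m$ factors whose indices tend to infinity, so as $N \to \infty$ they converge to $(\gamma_\infty - 1)^m$ and $\gamma_\infty^m$ respectively; consequently the second denominator factor converges to the positive constant $\gamma_\infty^m - (\gamma_\infty - 1)^m$, and the short product in the numerator converges to $(\gamma_\infty - 1)^m$. Writing $P_N := \prod_{i=m+2}^N(\gamma_i-1)$ and $Q_N := \prod_{i=m+1}^N \gamma_i$, the fraction then factors as
\begin{equation*}
  F_N = (\gamma_{m+1}-\omega)\,\frac{P_N}{Q_N - (\gamma_{m+1}-\omega)P_N}\cdot\frac{\prod_{i=N-m+1}^N(\gamma_i-1)}{\prod_{i=N-m+1}^N \gamma_i - \prod_{i=N-m+1}^N(\gamma_i-1)},
\end{equation*}
where the last factor converges to the finite constant $(\gamma_\infty-1)^m/(\gamma_\infty^m-(\gamma_\infty-1)^m)$. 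It therefore remains to show that the middle factor, a ratio of the two \emph{long} products, tends to zero.

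Dividing numerator and denominator of the middle factor by $P_N$ reduces the task to showing that $Q_N/P_N = \gamma_{m+1}\prod_{i=m+2}^N \gamma_i/(\gamma_i-1)$ diverges to $+\infty$, since then the middle factor equals $1/\bigl(Q_N/P_N - (\gamma_{m+1}-\omega)\bigr) \to 0$. This divergence is the heart of the matter and the step I expect to be the main obstacle: each factor $\gamma_i/(\gamma_i-1) \to \gamma_\infty/(\gamma_\infty - 1) > 1$, so $\sum_i \log\bigl(\gamma_i/(\gamma_i-1)\bigr)$ diverges and hence so does the infinite product. The degenerate case $\gamma_\infty = 1$, which can only occur under finite time controllability, is handled directly, since then the factors $\gamma_i - 1$ eventually vanish and $F_N = 0$. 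This yields $F_N \to 0$ and thus $\alpha_{N,m}^\omega \to 1$. For the stability conclusion, since $M$ is finite and $\alpha_{N,m}^\omega \to 1 > 0$ for every $m \in M$, there is $N_0$ with $\alpha^\star = \min_{m\in M}\alpha_{N,m}^\omega > 0$ for all $N \geq N_0$; for such $N$ the hypotheses of Theorem \ref{stabthm} are met and asymptotic stability of the closed loop follows.
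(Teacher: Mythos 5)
Your proof is correct and follows essentially the same route as the paper's: after disposing of the trivial case $\omega \geq \gamma_{m+1}$, both arguments split the fraction in \eqref{alpha_formula} into exactly the same two factors, handle the factor built from the $m$ products with indices near $N$ (you compute its limit via $\gamma_k \to \gamma_\infty$, the paper bounds it using summability of $\beta(r,\cdot)$), and reduce everything to showing $\prod_{i=m+2}^N (\gamma_i-1)/\gamma_i \to 0$. Your justification of this last step --- the factors converge to $(\gamma_\infty-1)/\gamma_\infty < 1$, hence the product vanishes --- is the qualitative version of the paper's quantitative argument, which derives a uniform geometric bound on $(\gamma_i-1)/\gamma_i$ from the tail estimate $\omega\sum_{n\geq\widetilde m} c_n \leq \varepsilon < 1$; the mathematical content is the same.
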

\begin{proof}
	Since $\beta(r,n)$ is summable, i.e., $\sum_{n=0}^\infty \beta(r,n) < \infty$, there exists an index $\widetilde{m}$ such that $\omega \sum_{n=\widetilde{m}}^\infty c_n \leq \varepsilon < 1$. It suffices to investigate the case $\gamma_{m+1} - \omega > 0$ because otherwise the assertion holds trivially. We have to show that the subtrahend of the difference in formula \eqref{alpha_formula} converges to zero as the optimization horizon $N$ tends to infinity. To this aim, we divide the term under consideration into two factors. One of them is the following which is bounded for sufficiently large $N$, i.e., $N > \widetilde{m} + m$,
	{\small\begin{equation*}
		\frac {\prod_{N-m+1}^N (\gamma_i - 1)} {\prod_{N-m+1}^N \gamma_i - \prod_{N-m+1}^N (\gamma_i - 1)} < \frac {m (\gamma_{\widetilde{m}} + \varepsilon - 1)} {m(\gamma_{\widetilde{m}} - (\omega - 1) c_{\widetilde{m}} - (\gamma_{\widetilde{m}} - (\omega - 1) c_{\widetilde{m}} + \varepsilon - 1))} = \frac {\gamma_{\widetilde{m}} + \varepsilon - 1} {1 - \varepsilon} < \infty.
	\end{equation*}}
	Hence, we focus on the other factor, i.e., 
	\begin{eqnarray*}
		\frac {(\gamma_{m+1} - \omega) \prod_{m+2}^N (\gamma_i - 1)} {\prod_{m+1}^N \gamma_i - (\gamma_{m+1} - \omega) \prod_{m+2}^N (\gamma_i - 1)} 
		& = & \frac {\prod_{m+1}^N \gamma_i} {\prod_{m+1}^N \gamma_i - (\gamma_{m+1} - \omega) \prod_{m+2}^N (\gamma_i - 1)} - 1 \\
		& = & \frac {\gamma_{m+1}} {\omega + (\gamma_{m+1} - \omega) \left( \frac{ \prod_{m+2}^N \gamma_i - \prod_{m+2}^N (\gamma_i - 1)} {\prod_{m+2}^N \gamma_i} \right)} - 1.
	\end{eqnarray*}
	Showing the convergence of this term to zero for $N$ tending to infinity completes the proof. Thus, it suffices to prove $\prod_{m+2}^N (\gamma_i - 1)/ \gamma_i \longrightarrow 0$ for $N$ tending to infinity. 
	Taking into account $\gamma_{\widetilde{m}} - (\omega - 1) c_{\widetilde{m}} \leq \gamma_i$ for all $i \geq \widetilde{m}$, we derive the desired convergence by the estimate
	\begin{equation*}
		\prod_{m+2}^N \frac {\gamma_i - 1}{\gamma_i} \leq \prod_{m+2}^{\widetilde{m}} \frac {\gamma_i - 1}{\gamma_i} \left( \frac {\gamma_{\widetilde{m}} - (\omega - 1) c_{\widetilde{m}} + \varepsilon - 1} {\gamma_{\widetilde{m}} - (\omega - 1) c_{\widetilde{m}}} \right)^{N-\widetilde{m}} \stackrel {N \rightarrow \infty} {\longrightarrow} 0.
	\end{equation*}
\end{proof}

Corollary \ref{convergenceNtoINFINITY} ensures stability for sufficiently large optimization horizons $N$ which has already been shown in \cite{GMTT05} under similar conditions (see also \cite{JadH05} for an analogous result in continuous time). Our result generalizes this assertion to arbitrary, but fixed control horizons $m$. Furthermore, similar to \cite{GruR08} for $\omega=1$, it also implies that for $N\to\infty$ the infinite horizon cost $V_\infty^{\mu_{N,m}}$ will converge to the optimal value $V_\infty$ (using the inequality $\alpha_{N,m}^1 V_\infty^{\mu_{N,m}} \le V_N$ from Theorem \ref{4:thm:optimality} and the obvious inequality $V_N\le V_\infty$ for $\omega=1$).    

However, compared to these references, our approach has the significant advantage that we can also investigate the influence of different quantitative characteristics of $\beta$, e.g., the overshoot $C$ and decay rate $\sigma$ in the exponentially controllable case \eqref{3:eq:exponential controllability}. For instance, the task of calculating all parameter combinations $(C,\, \sigma)$ implying a nonnegative $\alpha_{N,m}^\omega$ and thus stability for a given optimization horizon $N$ can be easily performed, cf. Figure \ref{stability_region_m1}\footnote{The idea to visualize the parameter dependent stability regions in this way goes back to \cite{Voit08}.}.

\begin{figure}[!ht]
	\begin{center}
		\includegraphics[width=8.cm]{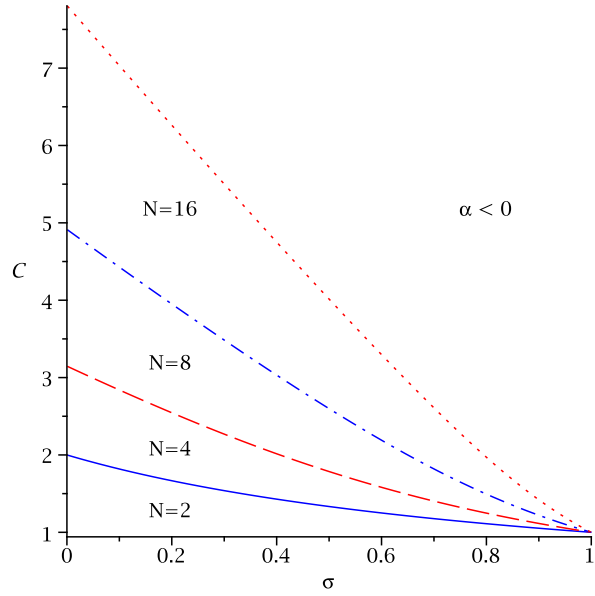}
	\end{center}
	\caption{Illustration of the stability region guaranteed by Theorem \ref{alpha_formula_thm} for various optimization horizons $N$ given a $\mathcal{KL}$-function of type \eqref{3:eq:exponential controllability} for ``classical'' MPC, i.e., $m=1$.}
	\label{stability_region_m1}
\end{figure}

As expected, the stability region grows with increasing optimization horizon $N$. Moreover, Theorem \ref{alpha_formula_thm} enables us to quantify the observed enlargement, e.g., doubling $N=2$ increases the considered area by $129.4$ percent. Furthermore, we observe that for a given decay rate $\sigma$ there always exists an overshoot $C$ such that stability is guaranteed. Indeed, Theorem \ref{alpha_formula_thm} enables us to prove this. To this end, we deal with the special case $C=1$ exhibiting a significantly simpler expression for $\alpha_{N,m}^\omega$.

\begin{proposition}\label{exp_stab_C1}
	Let the $\mathcal{KL}_0$-function be of type \eqref{3:eq:exponential controllability} and $C=1$. Then the optimal value $\alpha_{N,m}^\omega$ is equal to $\min\{1,\, 1 - (1+\sigma\omega-\omega) \sigma^{N-1}\} > 0$.
\end{proposition}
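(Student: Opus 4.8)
The plan is to derive the stated closed form directly from the explicit formula \eqref{alpha_formula} of Theorem \ref{alpha_formula_thm}, which applies here since the exponential $\KL_0$-function \eqref{3:eq:exponential controllability} always satisfies the submultiplicativity property \eqref{3:eq:submultiplicativity}. For $C=1$ we have $\beta(r,n)=\sigma^n r$, and writing $\gamma_k=B_k(r)/r$ with the (weighted) $B_k$ from \eqref{3:eq:definition B_N_omega} gives $\gamma_k=\sum_{n=0}^{k-2}\sigma^n+\omega\sigma^{k-1}$ for $k\ge 1$. The single identity driving the whole computation is
\begin{equation*}
\gamma_i-1=\sigma\,\gamma_{i-1}\qquad(i\ge 2),
\end{equation*}
which one checks holds even in the presence of the terminal weight $\omega$, and which is valid for every index occurring in \eqref{alpha_formula} (all of them are at least $m+1\ge 2$ or $N-m+1\ge 2$).

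First I would use this identity to rewrite the two products in \eqref{alpha_formula}, namely $\prod_{i=m+2}^{N}(\gamma_i-1)=\sigma^{N-m-1}\prod_{i=m+1}^{N-1}\gamma_i$ and $\prod_{i=N-m+1}^{N}(\gamma_i-1)=\sigma^{m}\prod_{i=N-m}^{N-1}\gamma_i$. Substituting these into the two denominator factors and pulling out the common products $\prod_{i=m+1}^{N-1}\gamma_i$ and $\prod_{i=N-m+1}^{N-1}\gamma_i$, respectively, reduces each bracketed remainder to a quantity computable by a one-line calculation with the explicit weighted form: I expect $\gamma_N-(\gamma_{m+1}-\omega)\sigma^{N-m-1}=\gamma_{N-m}$ and $\gamma_N-\sigma^{m}\gamma_{N-m}=(1-\sigma^m)/(1-\sigma)$. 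Thus the first denominator factor becomes $\gamma_{N-m}\prod_{i=m+1}^{N-1}\gamma_i$ and the second becomes $\frac{1-\sigma^m}{1-\sigma}\prod_{i=N-m+1}^{N-1}\gamma_i$, while the numerator of the fraction in \eqref{alpha_formula} becomes $(\gamma_{m+1}-\omega)\sigma^{N-1}\prod_{i=m+1}^{N-1}\gamma_i\prod_{i=N-m}^{N-1}\gamma_i$.

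Next I would cancel. The factor $\prod_{i=m+1}^{N-1}\gamma_i$ cancels immediately, and writing $\prod_{i=N-m}^{N-1}\gamma_i=\gamma_{N-m}\prod_{i=N-m+1}^{N-1}\gamma_i$ cancels the remaining $\gamma_{N-m}\prod_{i=N-m+1}^{N-1}\gamma_i$. The entire fraction therefore collapses to $(\gamma_{m+1}-\omega)\sigma^{N-1}(1-\sigma)/(1-\sigma^m)$. The final simplification is to observe that
\begin{equation*}
\gamma_{m+1}-\omega=\frac{1-\sigma^{m}}{1-\sigma}\,(1+\sigma\omega-\omega),
\end{equation*}
so that the factor $(1-\sigma^m)/(1-\sigma)$ cancels and the subtrahend in \eqref{alpha_formula} equals exactly $(1+\sigma\omega-\omega)\sigma^{N-1}$, with all dependence on $m$ having disappeared. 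Hence $\alpha_{N,m}^\omega=1-(1+\sigma\omega-\omega)\sigma^{N-1}$ in the regime $\omega<\gamma_{m+1}$.

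To finish, I would match this against the two cases of Theorem \ref{alpha_formula_thm}. Since $(1-\sigma^m)/(1-\sigma)>0$, the last displayed identity shows that $\omega\ge\gamma_{m+1}$ is equivalent to $1+\sigma\omega-\omega\le 0$, in which case Theorem \ref{alpha_formula_thm} gives $\alpha_{N,m}^\omega=1$ while $1-(1+\sigma\omega-\omega)\sigma^{N-1}\ge 1$; both cases are thus summarized by $\alpha_{N,m}^\omega=\min\{1,\,1-(1+\sigma\omega-\omega)\sigma^{N-1}\}$. Positivity then follows because $\omega\ge 1$ forces $1+\sigma\omega-\omega=1-\omega(1-\sigma)\le\sigma$, whence $(1+\sigma\omega-\omega)\sigma^{N-1}\le\sigma^{N}<1$ (note $N\ge m+1\ge 2$). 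The only real obstacle is the careful bookkeeping of the telescoping products and the two bracket identities in the middle step; everything else is cancellation, and the conceptual crux is that the weighted reading of $\gamma_k$ makes $\gamma_{m+1}-\omega$ factor through $(1-\sigma^m)/(1-\sigma)$, which is precisely what removes the $m$-dependence.
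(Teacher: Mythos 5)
Your proof is correct and takes essentially the same route as the paper's: both specialize formula \eqref{alpha_formula} via $\eta := 1+\sigma\omega-\omega$, using exactly the identities $\gamma_i - 1 = \sigma\gamma_{i-1}$ (equivalently $\gamma_i = (1-\eta\sigma^{i-1})/(1-\sigma)$) and $\gamma_{m+1}-\omega = \eta(1-\sigma^m)/(1-\sigma)$, reduce the case $\omega \ge \gamma_{m+1}$ to $\eta \le 0$, and collapse the fraction to $\eta\sigma^{N-1}$ by the same two cancellations (your bracket identities are precisely the paper's underbraced simplifications). Your explicit positivity argument ($\omega\ge 1$ gives $\eta\le\sigma$, hence $\eta\sigma^{N-1}\le\sigma^N<1$) is a detail the paper leaves implicit, but otherwise the arguments coincide.
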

\begin{proof}
	We define the auxiliary quantity $\eta := 1 + \sigma \omega - \omega$. Then, we obtain the equalities $\gamma_i = (1-\eta \sigma^{i-1})/(1-\sigma)$, $\gamma_{i} - 1 = \sigma (1 - \eta \sigma^{i-2}) / (1-\sigma)$, and $\gamma_{m+1} - \omega = \eta (1-\sigma^{m}) / (1-\sigma)$. Thus, the necessary and sufficient condition $(\gamma_{m+1}-\omega) \leq 0$ from Theorem \ref{alpha_formula_thm} holds if and only if $\eta \leq 0$. Hence, we restrict ourselves to $\eta > 0$ and the right hand side of formula \eqref{alpha_formula} is equal to
	{\small\begin{eqnarray*}
		\alpha_{N,m}^\omega & = & 1 - \frac {
			\frac {\eta (1-\sigma^m)} {1-\sigma} \prod_{m+2}^N \frac {\sigma (1-\sigma^{i-2}\eta)} {1-\sigma} \prod_{N-m+1}^N \frac {\sigma (1-\sigma^{i-2}\eta)} {1-\sigma}
		} {
			\left( \prod_{m+1}^N \frac {1-\sigma^{i-1}\eta}{1-\sigma} - \frac {(1-\sigma^m)\eta}{1-\sigma} \prod_{m+2}^N \frac {\sigma(1-\sigma^{i-2}\eta)}{1-\sigma} \right) \left( \prod_{N-m+1}^N \frac {1-\sigma^{i-1} \eta}{1-\sigma} - \prod_{N-m+1}^N \frac {\sigma(1-\sigma^{i-2}\eta)}{1-\sigma} \right)
		}\\
		& = & 1 - \frac {
			\eta (1-\sigma^m) \sigma^{N-1} \prod_{N-m+1}^{N-m+1} (1-\sigma^{i-2}\eta)
		} {
			\underbrace{\left( (1-\sigma^{N-1} \eta) - \eta (1-\sigma^m) \sigma^{N-m-1} \right)}_{=1-\sigma^{N-m-1} \eta} \cdot \underbrace{\left( (1-\sigma^{N-1} \eta) - (1-\sigma^{N-m-1}\eta) \sigma^m \right)}_{= 1-\sigma^m}
		}\\
		& = & 1 - \eta \sigma^{N-1},
	\end{eqnarray*}}
	where we have omitted the control index.
\end{proof}
\begin{remark}
	Note that the optimal value $\alpha_{N,m}^\omega$, i.e., the solution of Problem \ref{optprob}, does not depend on the control horizon $m$ for $C=1$. Consequently, the control horizon $m$ does not play a role for this special case.
\end{remark}

Proposition \ref{exp_stab_C1} states that we always obtain a strictly positive value $\alpha_{N,m}^\omega$ for $C=1$. Due to continuity of the involved expressions this remains true for $C = 1 + \varepsilon$ for sufficiently small $\varepsilon$. Thus, for any decay rate $\sigma\in(0,1)$ and sufficiently small $C>1$ (depending on $N$, $m$ and $\omega$) we obtain $\alpha_{N,m}^\omega>0$ and thus asymptotic stability. However, this property does not hold if we exchange the roles of $\sigma$ and $C$, i.e., for a given overshoot $C>1$ stability cannot in general be concluded for a sufficiently small decay rate $\sigma>0$. 

Next, we investigate the relation between $\gamma = \sum_{n=0}^\infty c_n$ and the optimization horizon $N$ for finite time controllability in one step, i.e., for a $\mathcal{KL}_0$-function of type \eqref{3:eq:finite time controllability} satisfying \eqref{3:eq:submultiplicativity} defined by $c_0 = \gamma$ and $c_n = 0$ for all $n \in \mathbb{N}_{\geq 1}$. For this purpose, let $\gamma$ be strictly greater than $\omega \geq 1$. Otherwise Theorem \ref{alpha_formula_thm} provides $\alpha_{N,m}^\omega = 1$ regardless of the optimization horizon $N$. In this case, Formula \eqref{alpha_formula} yields 
\begin{equation*}
	\alpha_{N,m}^\omega = 1 - \frac {(\gamma - \omega) (\gamma - 1)^{N-1}}{(\gamma^{N-m} - (\gamma - \omega) (\gamma - 1)^{N-m-1}) (\gamma^{m} - (\gamma - 1)^{m})}.
\end{equation*}
We aim at determining the minimal optimization horizon $N$ guaranteeing stability for a given parameter $\gamma$. In order to ensure stability, we have to show $\alpha_{N,m}^\omega \geq 0$. We begin our examination with the smallest possible control horizon $m=1$. This leads to the inequality
\begin{equation*}
	\alpha_{N,1}^\omega = 1 - \frac {(\gamma - \omega) (\gamma - 1)^{N-1}}{\gamma^{N-1} - (\gamma - \omega) (\gamma - 1)^{N-2}} = \frac {\gamma^{N-1} - (\gamma - \omega) (\gamma - 1)^{N-2} \gamma}{\gamma^{N-1} - (\gamma - \omega) (\gamma - 1)^{N-2}} \geq 0.
\end{equation*}
Since the logarithm is monotonically increasing this is in turn equivalent to 
\begin{equation*}
	N \geq 2 + \frac { \ln (\gamma - \omega) } {\ln \gamma - \ln (\gamma - 1)} =: f(\gamma).
\end{equation*}
We show that $f(\gamma)$ tends to $\gamma \ln \gamma$ asymptotically. To this end, we consider
\begin{equation*}
	\lim_{\gamma \rightarrow \infty} \frac {f(\gamma)}{\gamma \ln \gamma} = \underbrace{\lim_{\gamma \rightarrow \infty} \frac {2}{\gamma \ln \gamma}}_{= 0} + \underbrace{\lim_{\gamma \rightarrow \infty} \frac {\ln (\gamma - \omega)}{\ln \gamma}}_{= 1} \cdot \lim_{\gamma \rightarrow \infty} \frac {\frac 1 \gamma}{\ln \gamma - \ln (\gamma - 1)} = \lim_{\gamma \rightarrow \infty} \frac {\gamma (\gamma - 1)}{\gamma^2} = 1
\end{equation*}
where we have used L'Hospital's rule. Clearly, ceiling the derived expression for the optimization horizon $N$ doesn't change the obtained result.

We continue with analysing the coherancy between $\gamma$ and $N$ for control horizons $m$ which provide the largest optimal value, i.e., $m = \lfloor N/2 \rfloor$, cf. Section \ref{alphasec2} below. Analogously, $\alpha_{N,\lfloor N/2 \rfloor}^\omega \geq 0$ induces lower bounds
\begin{equation*}
	N \geq \begin{cases}
		2 \ln \left( \frac {2\gamma - \omega - 1}{\gamma - 1}\right) / (\ln \gamma - \ln (\gamma - 1)) & \text{for even $N$} \\[2mm]
		\left( \ln \left( \frac {2 \gamma - \omega}{\gamma} \right) + \ln \left( \frac {2 \gamma - \omega}{\gamma - 1} \right) \right) / (\ln \gamma - \ln (\gamma - 1)) & \text{for odd $N$}
	\end{cases}
\end{equation*}
for the optimization horizon $N$. Again in consideration of L'Hospital's rule, the investigated expression exhibits asymptotically a behaviour like $2 \ln 2 \cdot \gamma$, cf. Figure \ref{finite_time_one_step_asymptotic}. Since the obtained approximation $2 \ln 2 \cdot \gamma$ holds for both estimates corresponding to even and odd natural numbers $N$ for $m=\lfloor N/2 \rfloor$, we have illustrated the resulting horizon lengths for given $\gamma$ with respect to both. Moreover, these estimates coincide with the numerical results derived in \cite[Section 6]{G07}.

\begin{figure}[!ht]
	\begin{center}
		\includegraphics[width=6.2cm]{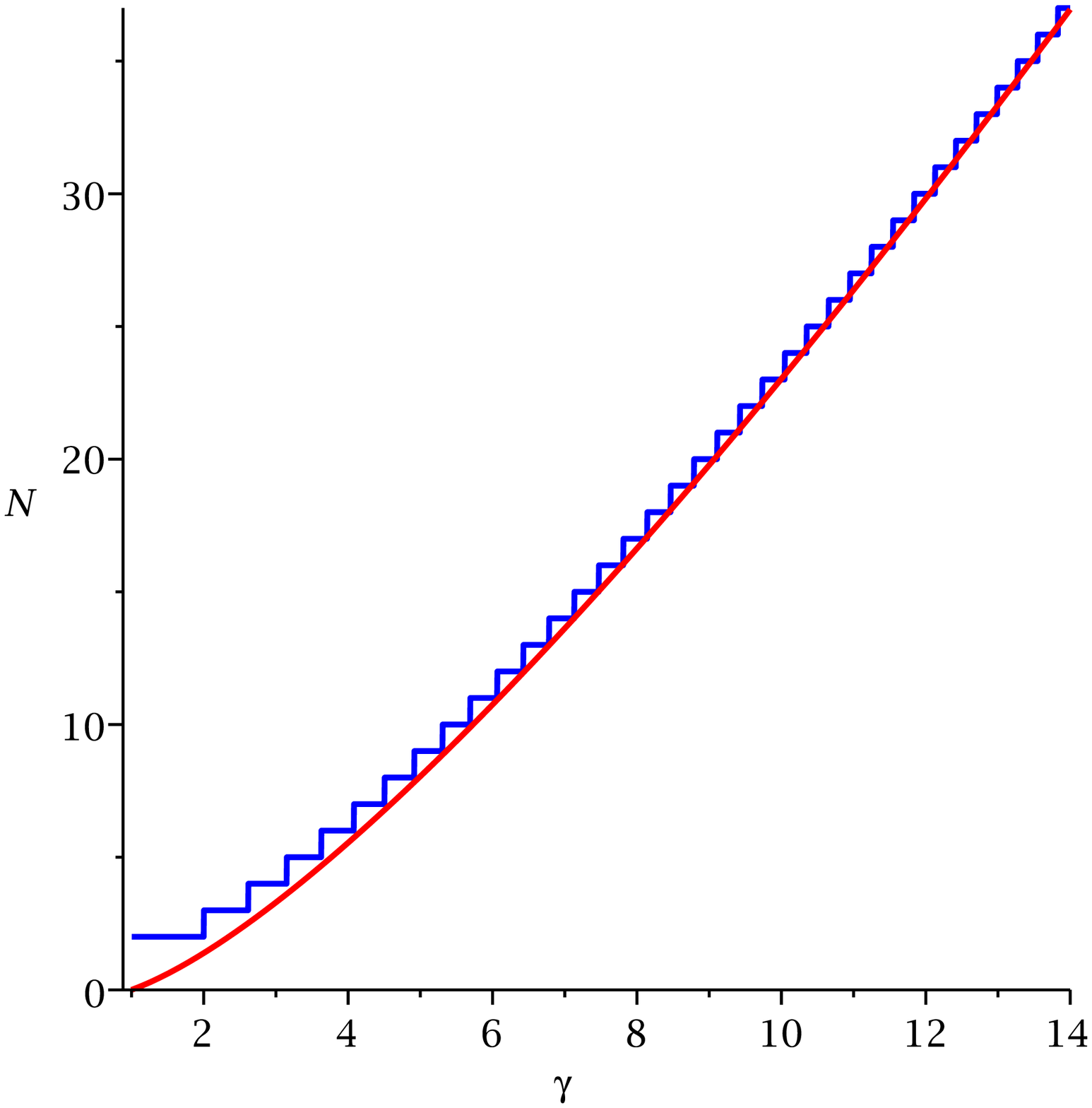}
		\includegraphics[width=6.2cm]{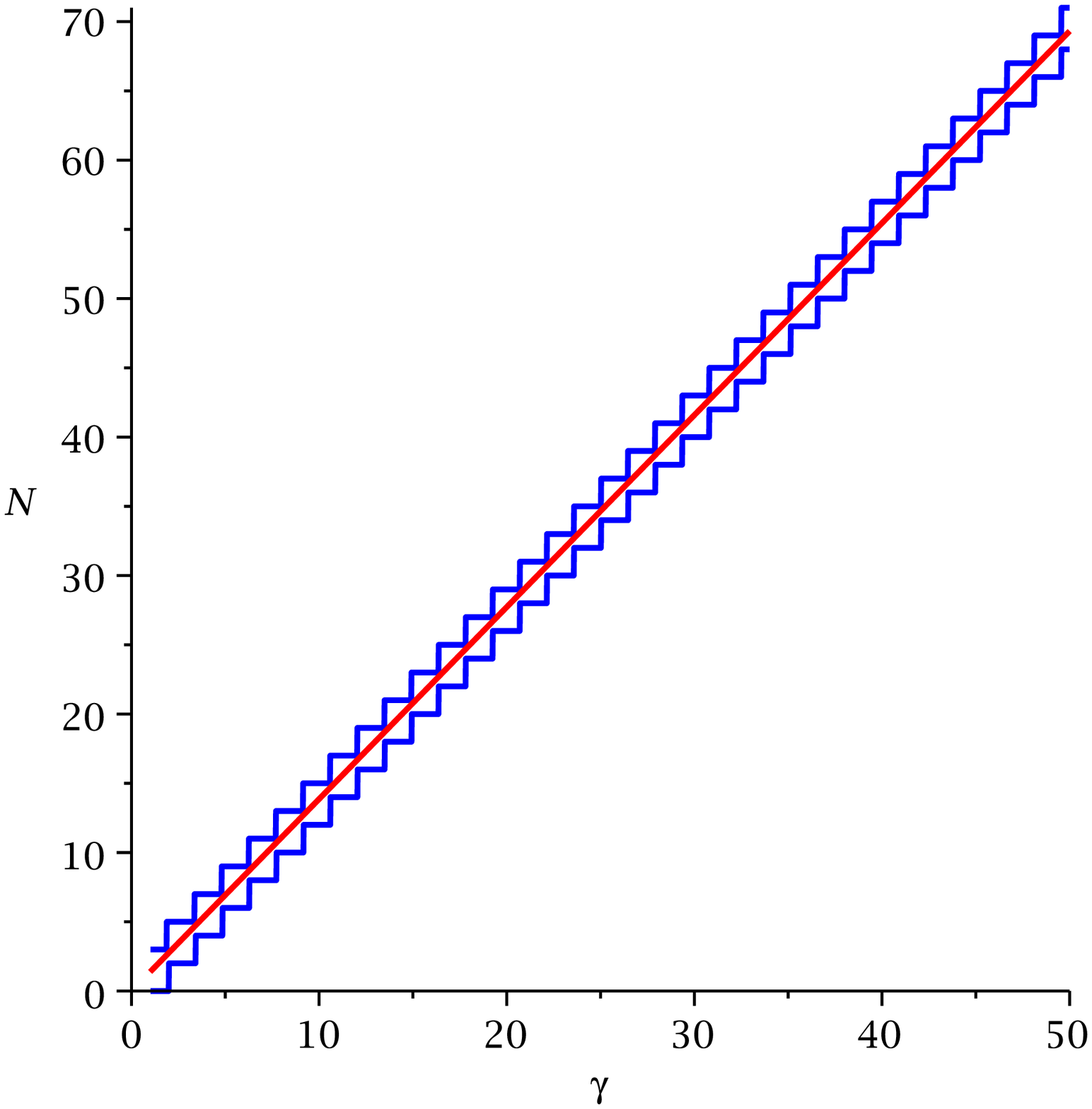}
	\end{center}
	\caption{Minimal stabilizing optimization horizons for one step finite time controllability for $m=1$ and $m = \lfloor N/2 \rfloor$ in comparison with their asymptotic approximations.}
	\label{finite_time_one_step_asymptotic}
\end{figure}

\begin{remark}
	As a consequence of Lemma \ref{linlemma} it follows that these estimates provide upper bounds for the minimal stabilizing horizons for $\mathcal{KL}_0$-functions $\beta(\cdot,\cdot)$ which are linear in their first argument and satisfy \eqref{3:eq:submultiplicativity}, e.g., for $c_0 = \gamma = C \sum_{n=0}^\infty \sigma^n$ with $C \geq 1$, $\sigma \in (0,1)$.
\end{remark}

\section{Qualitative characteristics of $\alpha_{N,m}^\omega$ depending on varying control horizon $m$}\label{alphasec2}

In the previous section we have investigated the influence of the optimization horizon $N$ on the optimal value $\alpha_{N,m}^\omega$ of Problem \ref{optprob} in the extended version. E.g., we have proven that Theorem \ref{alpha_formula_thm} ensures stability for sufficiently large optimization horizons $N$. Thus, choosing $N$ appropriately remains crucial in order to obtain suitable $\alpha_{N,m}^\omega$-values. However, Theorem \ref{stabthm} assumes the positivity of several $\alpha_{N,m}^\omega$-values with different control horizons $m$. Section \ref{alphasec1} already indicated that, e.g., the minimal stabilizing horizon depends sensitively on the parameter $m$. Thus, the question arises whether changing the control horizon persistently causes additional difficulties in order to guarantee stability.

Before proceeding, we state results concerning symmetry and monotonicity properties of the optimal value $\alpha_{N,m}^\omega$ with respect to the control horizon $m$. These results -- which are proven in Subsections \ref{subsectionSymmetry}, \ref{subsectionMonotonicity} -- do not only pave the way to answer the asked question but are also interesting in their own rights.
\begin{proposition}\label{propositionSymmetry}
	Let $\beta$ be of type \eqref{3:eq:exponential controllability} or of type \eqref{3:eq:finite time controllability} with $c_n = 0$ for $n \geq 3$. Then $\alpha_{N,m}^\omega \leq \alpha_{N,N-m}^\omega$ holds for $m \in \{1,\ldots,\lfloor N/2 \rfloor\}$, $N \in \mathbb{N}$, and $\omega \geq 1$.
\end{proposition}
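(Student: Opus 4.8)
The plan is to argue directly from the closed-form expression \eqref{alpha_formula}. Write $\alpha_{N,m}^\omega = 1 - Q_{N,m}$, where $Q_{N,m}$ denotes the fraction subtracted in \eqref{alpha_formula}; then the assertion $\alpha_{N,m}^\omega\le\alpha_{N,N-m}^\omega$ is equivalent to the reversed inequality $Q_{N,m}\ge Q_{N,N-m}$. The degenerate case $\gamma_{m+1}\le\omega$, in which $\alpha_{N,m}^\omega=1$ by Theorem \ref{alpha_formula_thm}, is disposed of first: using $\omega\ge 1$ together with \eqref{3:eq:submultiplicativity} one checks that $\gamma_{m+1}\le\omega$ forces $\gamma_{N-m+1}\le\omega$ as well (for $\beta$ of exponential type this is immediate from the monotonicity of $i\mapsto\gamma_i$; in the finite-time case it follows from a short computation with $c_0,c_1,c_2$), so that $\alpha_{N,N-m}^\omega=1$ and equality holds. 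We may therefore assume $\gamma_{m+1}>\omega$, hence also $\gamma_{N-m+1}>\omega$, throughout.

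The decisive preparatory step is to factor $Q_{N,m}$. Set $r_i:=(\gamma_i-1)/\gamma_i\in(0,1)$, let $g(x):=x/(1-x)$ (increasing on $(0,1)$), and put $S:=\prod_{i=N-m+1}^N r_i$, $S':=\prod_{i=m+1}^N r_i$, together with $\rho_1:=(\gamma_{m+1}-\omega)/(\gamma_{m+1}-1)$ and $\rho_2:=(\gamma_{N-m+1}-\omega)/(\gamma_{N-m+1}-1)$. Dividing numerator and denominator in \eqref{alpha_formula} by $\prod_{i=m+1}^N\gamma_i$ and by $\prod_{i=N-m+1}^N\gamma_i$, respectively, and expressing everything through the $r_i$, yields the compact forms $Q_{N,m}=g(\rho_1 S')\,g(S)$ and, by the same computation applied to the control horizon $N-m$, $Q_{N,N-m}=g(\rho_2 S)\,g(S')$. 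Since all four arguments lie in $(0,1)$, the target inequality $g(\rho_1 S')g(S)\ge g(\rho_2 S)g(S')$ may be cross-multiplied and, after cancellation, reduces to the single polynomial inequality
\[(\rho_1-\rho_2)+\rho_2(1-\rho_1)S-\rho_1(1-\rho_2)S'\ge0.\]
I would also record the two structural facts $S'\le S$ (the index range of $S'$ contains that of $S$, and every $r_i<1$) and $S'=wS$ with $w:=\prod_{i=m+1}^{N-m}r_i\in(0,1]$, which are needed below.

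It remains to establish the displayed inequality, and this is where the hypothesis on $\beta$ enters decisively: the factors pull in opposite directions ($g$ is increasing, $S'\le S$, whereas $\rho_1\le\rho_2$ in the main regime), so no purely monotone argument closes it and the explicit form of the $\gamma_i$ must be used. In the finite-time case with $c_n=0$ for $n\ge 3$ the sequence $\gamma_i$ is constant, $\gamma_i=c_0+c_1+c_2=:\Gamma$, for all $i\ge 4$; hence for $3\le m\le\lfloor N/2\rfloor$ one has $\gamma_{m+1}=\gamma_{N-m+1}=\Gamma$, so $\rho_1=\rho_2=:\rho$ and the display collapses to $\rho(1-\rho)(S-S')\ge0$, which holds since $\rho\in(0,1)$ and $S\ge S'$. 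The finitely many remaining horizons $m\in\{1,2\}$ are checked by substituting $\gamma_1=\omega c_0$, $\gamma_2=c_0+\omega c_1$, $\gamma_3=c_0+c_1+\omega c_2$ and $\Gamma$ and invoking \eqref{3:eq:submultiplicativity}. The exponential case \eqref{3:eq:exponential controllability} is the genuine obstacle: here $\rho_1<\rho_2$ strictly whenever $m<N-m$, so no collapse occurs; one writes $\gamma_i=C(1-\eta\sigma^{i-1})/(1-\sigma)$ with $\eta=1-\omega(1-\sigma)$, derives closed expressions for $S$, $S'$, $w$, $\rho_1$, $\rho_2$, substitutes $S'=wS$, and clears denominators to reduce the display to a polynomial inequality in $\sigma$, $C$, $\omega$. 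I expect the verification of this last inequality to be the bulk of the work and the main difficulty; as a consistency check, for $C=1$ the factors telescope and it becomes an equality, recovering the $m$-independence of $\alpha_{N,m}^\omega$ established in Proposition \ref{exp_stab_C1}. Isolating this polynomial estimate in an auxiliary algebraic lemma would keep the argument readable.
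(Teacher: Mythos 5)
Your factorization of \eqref{alpha_formula} is correct, and it in fact reproduces the paper's pivotal inequality: with your $\rho_1,\rho_2,S,S'$, the reduced inequality $(\rho_1-\rho_2)+\rho_2(1-\rho_1)S-\rho_1(1-\rho_2)S'\ge 0$ is (for $\omega>1$) exactly the paper's inequality \eqref{symmetry_ineq} divided by the positive quantities $\prod_{i=m+1}^N\gamma_i$ and $(\gamma_{m+1}-1)(\gamma_{N-m+1}-1)/(\omega-1)$. Your normal form even yields more than you extracted from it: whenever $\rho_1\ge\rho_2$, using $S\ge S'$ and $\rho_2(1-\rho_1)\ge 0$,
\begin{equation*}
(\rho_1-\rho_2)+\rho_2(1-\rho_1)S-\rho_1(1-\rho_2)S' \;\ge\; (\rho_1-\rho_2)(1-S')\;\ge\;0,
\end{equation*}
and since $t\mapsto(t-\omega)/(t-1)$ is nondecreasing for $t>1$, $\omega\ge1$, the condition $\rho_1\ge\rho_2$ holds whenever $\gamma_{m+1}\ge\gamma_{N-m+1}$. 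This settles not only finite time controllability with $m\ge3$ (your collapse argument) but also $m=2$ (there $\gamma_3=\Gamma+(\omega-1)c_2\ge\Gamma=\gamma_{N-1}$) and the exponential case with $\eta:=1+\sigma\omega-\omega\le0$ (there $\gamma_{i+1}-\gamma_i=C\sigma^{i-1}\eta\le0$, so the sequence $\gamma_i$ is nonincreasing).

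What remains, however, is the actual substance of the proposition, and there the proposal has genuine gaps. First, finite time with $m=1$: here $\rho_1<\rho_2$ is possible (take $(\omega-1)c_1<c_2$), so the easy estimate above fails, and ``substituting and invoking \eqref{3:eq:submultiplicativity}'' is not a proof; the paper's Lemma \ref{symmetry_omega_c1c2c3} spends its longest case on exactly this, needing $c_0\ge 1$, $c_2\le c_1^2$ and the standing assumption $\gamma_2>\omega$ to control the two competing coefficients. Second, and decisively, the exponential case with $\eta>0$ (i.e.\ $1\le\omega<1/(1-\sigma)$), where $\rho_1<\rho_2$ strictly: you reduce it to ``a polynomial inequality in $\sigma$, $C$, $\omega$'' and stop, conceding this is the bulk of the work. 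It is: this inequality, uniform in $N$ and $m$, is what the whole of Appendix \ref{AppendixProofSymmetry} is for, and it is not a routine verification --- the paper needs the root-location Lemma \ref{lemma_polynomial}, the anchor identity $p(1)=q(1)$ supplied by Proposition \ref{exp_stab_C1}, sign control of all roots and of the $(m+1)$st derivatives of the competing polynomials, and an induction over $N$ with base case $N=2m+1$. None of these ideas appear in the proposal, so the exponential case is simply unproven. A further small repair: your remark that the degenerate case $\gamma_{m+1}\le\omega$ is ``immediate from the monotonicity of $i\mapsto\gamma_i$'' is backwards as stated, since $\gamma_i$ is \emph{increasing} precisely when $\eta>0$; the correct argument (the paper's Lemma \ref{suff_cond_exp}) is that $\gamma_{m+1}\le\omega$ together with $C\ge1$ forces $\omega\ge 1/(1-\sigma)$, i.e.\ $\eta\le0$, and only then does monotonicity give $\gamma_{N-m+1}\le\gamma_{m+1}\le\omega$.
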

\begin{proposition}\label{propositionMonotonicity}
	Let $\beta$ be of type \eqref{3:eq:exponential controllability} and $\omega \in \{1\} \cup [1/(1-\sigma),\infty)$ or of type \eqref{3:eq:finite time controllability} with $c_n = 0$ for $n \geq 2$ and $\omega \geq 1$. Then $\alpha_{N,m+1}^\omega \geq \alpha_{N,m}^\omega$ holds for $m \in \{1,\ldots,\lfloor N/2 \rfloor - 1\}$, $N \in \mathbb{N}$.
\end{proposition}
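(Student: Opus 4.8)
The plan is to argue directly from the closed form \eqref{alpha_formula} in Theorem \ref{alpha_formula_thm}, writing $\alpha_{N,m}^\omega = 1-R_m$ with $R_m$ the subtracted fraction, so that in the nontrivial regime $\alpha_{N,m+1}^\omega\ge\alpha_{N,m}^\omega$ is equivalent to $R_{m+1}\le R_m$ with $R_m,R_{m+1}>0$. A preliminary observation fixes the role of the hypotheses on $\omega$: by Theorem \ref{alpha_formula_thm} we have $\alpha_{N,m}^\omega=1$ exactly when $\gamma_{m+1}\le\omega$, and in the exponential case $\gamma_{k+1}-\gamma_k = C\sigma^{k-1}(1-\omega(1-\sigma))$, so $(\gamma_k)$ is nondecreasing for $\omega=1$ and nonincreasing for $\omega\ge 1/(1-\sigma)$, while the excluded band $1<\omega<1/(1-\sigma)$ is exactly where $(\gamma_k)$ could cross the value $\omega$ from below and make $\alpha_{N,m}^\omega$ drop from $1$ to below $1$ as $m$ grows, destroying monotonicity. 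I would record at the outset that in each admissible regime no such crossing occurs (for $\omega=1$ one checks $\gamma_{m+1}>1$ for all $m\ge 1$; for $\omega\ge 1/(1-\sigma)$ monotonicity of $(\gamma_k)$ forces $\gamma_{m+1}\le\omega\Rightarrow\gamma_{m+2}\le\omega$), so the trivial subcases are monotone for free and it remains to treat $\gamma_{m+1},\gamma_{m+2}>\omega$.

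For the core estimate I would reuse the factorization from the proof of Corollary \ref{convergenceNtoINFINITY}: setting $r_i:=\gamma_i/(\gamma_i-1)$ one has $R_m = 1/((G_m-1)(Q_m-1))$, with factors $F_1=1/(G_m-1)$, $F_2=1/(Q_m-1)$ determined by
\[
 G_m=\frac{\gamma_{m+1}}{\gamma_{m+1}-\omega}\prod_{i=m+2}^{N}r_i,\qquad Q_m=\prod_{i=N-m+1}^{N}r_i,
\]
so that $R_{m+1}\le R_m$ reads $(G_{m+1}-1)(Q_{m+1}-1)\ge(G_m-1)(Q_m-1)$. Passing $m\to m+1$ multiplies $Q_m$ by $r_{N-m}>1$ and $G_m$ by $\rho_m := (\gamma_{m+2}-1)(\gamma_{m+1}-\omega)/(\gamma_{m+1}(\gamma_{m+2}-\omega))$; hence $F_2$ always decreases, while $F_1$ decreases or increases according to whether $\rho_m$ exceeds $1$ or not. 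Expanding gives the inequality to be established,
\[
 (\rho_m r_{N-m}-1)\,G_mQ_m-(\rho_m-1)G_m-(r_{N-m}-1)Q_m\ge 0,
\]
which isolates the quantity whose sign must be controlled.

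In the finite time case with $c_n=0$ for $n\ge 2$ this is tractable because $\gamma_k$ is constant, $\gamma_k=c_0+c_1=:\Gamma$, for all $k\ge 3$; thus for $m\ge 2$ the products collapse to powers and $R_m = 1/((a^m-1)(\kappa a^{N-m-1}-1))$ with $a=\Gamma/(\Gamma-1)>1$ and $\kappa=\Gamma/(\Gamma-\omega)>1$. A one-line computation then yields $1/R_{m+1}-1/R_m = (a-1)(\kappa a^{N-m-2}-a^m)$, which is nonnegative precisely when $\kappa a^{N-2m-2}\ge 1$; since $m\le\lfloor N/2\rfloor-1$ gives $N-2m-2\ge 0$ and $a,\kappa>1$, this holds, and this is exactly the point at which the hypothesis $m\le\lfloor N/2\rfloor-1$ is used. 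The single boundary step $m=1\to m=2$, where the special values $\gamma_1=\omega c_0$ and $\gamma_2=c_0+\omega c_1$ still enter the products, I would verify by direct substitution into \eqref{alpha_formula}.

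The exponential case is the main obstacle, since no such collapse occurs: the ratios $r_i$ genuinely vary with $i$, the difference cannot be telescoped, and the sign of $\rho_m-1$ is not fixed across the admissible range (at $\omega=1$ one finds $\rho_m<1$, so $F_1$ increases, whereas for large $\omega$ one finds $\rho_m>1$). Thus precisely in the sub-range where $F_1$ increases the decrease of $Q_m$ must be shown to dominate quantitatively. My plan is to substitute $\gamma_i = C(1-\sigma^{i-1})/(1-\sigma)+\omega C\sigma^{i-1}$, clear denominators in the expanded inequality, and reduce it to a polynomial inequality in $\sigma$ (with parameters $C,\omega$), to be signed separately in the two admissible regimes $\omega=1$ and $\omega\ge 1/(1-\sigma)$ via the corresponding sign of $1-\omega(1-\sigma)$, i.e.\ the monotonicity direction of $(\gamma_k)$. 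I expect the cleanest organization to mirror Proposition \ref{exp_stab_C1} and Corollary \ref{convergenceNtoINFINITY}, isolating auxiliary monotone quantities and deferring the residual estimates to technical lemmata of the appendix; establishing that polynomial inequality uniformly over $\sigma\in(0,1)$, $C\ge 1$ and the admissible $\omega$ is where the real work lies.
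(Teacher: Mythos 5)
Your reformulation of \eqref{alpha_formula} as $\alpha_{N,m}^\omega = 1 - 1/\bigl((G_m-1)(Q_m-1)\bigr)$ is correct, and your treatment of the finite time case $c_n=0$, $n\geq 2$, is sound: for $m\geq 2$ the collapse $\gamma_k\equiv c_0+c_1$ for $k\geq 3$ does reduce the claim to $(a-1)(\kappa a^{N-m-2}-a^m)\geq 0$, which is exactly where the hypothesis $m\leq\lfloor N/2\rfloor-1$ enters. This is essentially the paper's Lemma \ref{monotonicity_omega_c1c2} in multiplicative form; note, however, that the paper disposes of $m=1$ together with all other $m$ by exploiting the identity $\gamma_{m+2}=\gamma_{N-m}$ (both indices being at least $3$), so the boundary step you defer to ``direct substitution'' needs no separate computation there. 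Your bookkeeping of the trivial regime $\alpha_{N,m}^\omega=1$ is also consistent with Lemma \ref{suff_cond_exp}.

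The genuine gap is the exponential case, which you leave as a plan, and which is where the paper's real work lies. For $\omega\geq 1/(1-\sigma)$, i.e.\ $\eta:=1+\sigma\omega-\omega\leq 0$, a termwise sign argument does work (the paper's Corollary \ref{qualitativ_properties_monotonicity_eta}: the quantity $-a$ is nonnegative, so every summand in \eqref{appendix_exp_monotonicity_inequality1} is nonnegative), and your outline would plausibly recover this. But for $\omega=1$ you have yourself observed that $\rho_m<1$, i.e.\ the $G$-factor moves in the wrong direction, so no termwise signing can succeed; ``clear denominators and reduce to a polynomial inequality in $\sigma$'' is then not a proof but a restatement of the problem, since what results is a family of polynomial inequalities whose degree grows with $N$ and $m$ and which must hold uniformly in $C\geq 1$, $\sigma\in(0,1)$. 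The paper overcomes this with three ingredients absent from your proposal: (i) an induction over the optimization horizon $N$ with base case $N=2m+2$ (Lemma \ref{appendix_exp_monotonicity_induciton_start_lemma}), whose induction step is reduced to the single inequality \eqref{appendix_exp_monotonicity_inequality2}; (ii) the polynomial comparison Lemma \ref{lemma_polynomial}, which converts each of these inequalities into statements about root locations and the sign of an $(m+1)$-st derivative; and (iii) the exact equality $p(1)=q(1)$ at $C=1$ supplied by Proposition \ref{exp_stab_C1}, which is the anchor making Lemma \ref{lemma_polynomial} applicable at all. Without a mechanism of this kind, the case of $\beta$ of type \eqref{3:eq:exponential controllability} with $\omega=1$ --- the core of the proposition --- remains unproved in your proposal.
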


These symmetry and monotonicity properties have the following remarkable consequence for our stabilization problem.
\begin{theorem}\label{thm:stabmain} 
	Let $\beta$ be of type \eqref{3:eq:exponential controllability} and $\omega \in \{1\} \cup [1/(1-\sigma),\infty)$ or of type \eqref{3:eq:finite time controllability} with $c_n = 0$ for $n \ge 2$. Then for each $N \geq 2$ the stability criterion from Theorem \ref{stabthm} is satisfied for $m^\star=N-1$ if and only if it is satisfied for $m^\star=1$.
\end{theorem}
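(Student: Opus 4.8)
The plan is to reduce the claimed equivalence to the single identity $\min_{1 \le m \le N-1} \alpha_{N,m}^\omega = \alpha_{N,1}^\omega$, after which the two instances of the stability criterion from Theorem \ref{stabthm} simply coincide. First I would spell out both criteria with the natural full index set $M = \{1,\ldots,m^\star\}$: for $m^\star = 1$ only $m=1$ is admissible, so the criterion reads $\alpha_{N,1}^\omega > 0$, whereas for $m^\star = N-1$ it reads $\min_{1 \le m \le N-1} \alpha_{N,m}^\omega > 0$. The implication from $m^\star = N-1$ to $m^\star = 1$ is then immediate, since $1$ lies in the index set $\{1,\ldots,N-1\}$; the entire content is therefore carried by the reverse direction.

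For the reverse direction I would establish that the minimum of $\alpha_{N,m}^\omega$ over $m \in \{1,\ldots,N-1\}$ is attained at $m=1$. I first observe that the hypotheses of Theorem \ref{thm:stabmain} are exactly those under which both Proposition \ref{propositionMonotonicity} and Proposition \ref{propositionSymmetry} apply: the finite-time case $c_n = 0$ for $n \ge 2$ is a special case of the condition $c_n = 0$ for $n \ge 3$ required by Proposition \ref{propositionSymmetry}, and for exponential $\beta$ the admissible $\omega$-ranges coincide. Proposition \ref{propositionMonotonicity} then yields the chain $\alpha_{N,1}^\omega \le \alpha_{N,2}^\omega \le \cdots \le \alpha_{N,\lfloor N/2 \rfloor}^\omega$, so on the lower half the minimum is already $\alpha_{N,1}^\omega$.

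It remains to handle the upper half $m \in \{\lfloor N/2 \rfloor + 1, \ldots, N-1\}$. For such $m$ I would set $m' := N - m$ and verify the index bookkeeping: as $m$ runs through this range, $m'$ runs through $\{1,\ldots,\lceil N/2 \rceil - 1\}$, and since $\lceil N/2 \rceil - 1 \le \lfloor N/2 \rfloor$ for every $N \ge 2$ (with equality for odd $N$), we have $m' \in \{1,\ldots,\lfloor N/2 \rfloor\}$. Proposition \ref{propositionSymmetry} applied to $m'$ gives $\alpha_{N,m}^\omega = \alpha_{N,N-m'}^\omega \ge \alpha_{N,m'}^\omega$, and the monotonicity chain above gives $\alpha_{N,m'}^\omega \ge \alpha_{N,1}^\omega$; combining the two yields $\alpha_{N,m}^\omega \ge \alpha_{N,1}^\omega$. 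Hence $\alpha_{N,1}^\omega$ is the minimum over the whole range $\{1,\ldots,N-1\}$, and positivity of $\min_{1 \le m \le N-1} \alpha_{N,m}^\omega$ is equivalent to $\alpha_{N,1}^\omega > 0$, which is the asserted equivalence.

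Since the two auxiliary propositions do all of the analytic work, the only genuine obstacle here is the elementary index bookkeeping, namely checking that the reflection $m \mapsto N-m$ maps the upper half into the range $\{1,\ldots,\lfloor N/2 \rfloor\}$ on which monotonicity is available, treated separately for even and odd $N$. I expect no further difficulty, and in particular no direct appeal to the explicit formula \eqref{alpha_formula} beyond what is already encapsulated in Propositions \ref{propositionSymmetry} and \ref{propositionMonotonicity}.
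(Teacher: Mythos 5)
Your proposal is correct and takes essentially the same route as the paper: the paper's proof is exactly the observation that Propositions \ref{propositionSymmetry} and \ref{propositionMonotonicity} together give $\alpha_{N,m}^\omega \geq \alpha_{N,1}^\omega$ for all admissible $m$, so the two criteria coincide. Your additional index bookkeeping (reflecting the upper half $m > \lfloor N/2 \rfloor$ via $m' = N-m$ into the range where monotonicity applies) simply makes explicit what the paper's one-line proof leaves implicit.
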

\begin{proof} 
	Proposition \ref{propositionSymmetry} and \ref{propositionMonotonicity} imply $\alpha_{N,m}^\omega \geq \alpha_{N,1}^\omega$ for all $m \in M$ which yields the assertion.
\end{proof}

In other words, for exponentially controllable systems without or with sufficiently large final weight and for systems which are finite time controllable in at most two steps, we obtain stability for our proposed networked MPC scheme under exactly the same conditions as for ``classical'' MPC, i.e., $m^\star=1$. In this context we recall once again that for $m^\star=1$ the stability condition of Theorem \ref{stabthm} is tight, cf.\  Remark \ref{rem:tight}.

\subsection{Symmetry Analysis}\label{subsectionSymmetry}

In this subsection we carry out a complete symmetry analysis of the optimal value $\alpha_{N,m}^\omega$ given in Theorem \ref{alpha_formula_thm} with respect to the control horizon $m$. To this end, we distinguish the special case $\omega = 1$ from $\omega > 1$, i.e., the szenario including an additional weight on the final term. The following symmetry property for $\omega = 1$ follows immediately from  Formula \eqref{alpha_formula}.
\begin{corollary}\label{alpha_symm_cor}
	For $m=1,\ldots,\lfloor \frac N2 \rfloor$ the values from Theorem \eqref{alpha_formula_thm} satisfy $\alpha_{N,m}^1 = \alpha_{N,N-m}^1$.
\end{corollary}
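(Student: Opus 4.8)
The plan is to exploit the symmetry of the closed-form expression in Theorem \ref{alpha_formula_thm} directly. For $\omega = 1$, the necessary-and-sufficient condition $\omega \geq \gamma_{m+1}$ becomes $1 \geq \gamma_{m+1}$; since $\gamma_{m+1} = B_{m+1}(r)/r \geq \beta(r,0)/r$ and we may assume $\beta(r,0)/r \geq 1$ in the relevant cases, the degenerate branch $\alpha = 1$ either holds for both $m$ and $N-m$ simultaneously or for neither. The substantive case is therefore $\gamma_{m+1} > 1$, where formula \eqref{alpha_formula} applies with $\omega = 1$. The key observation is that with $\omega = 1$ the factor $(\gamma_{m+1} - \omega)$ becomes $(\gamma_{m+1} - 1)$, so the numerator acquires an extra factor that makes the whole expression far more symmetric.

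First I would write out \eqref{alpha_formula} with $\omega = 1$ explicitly. The numerator becomes
\begin{equation*}
	(\gamma_{m+1} - 1) \prod_{i=m+2}^N (\gamma_i - 1) \prod_{i=N-m+1}^N (\gamma_i - 1) = \prod_{i=m+1}^N (\gamma_i - 1) \prod_{i=N-m+1}^N (\gamma_i - 1),
\end{equation*}
where I have absorbed $(\gamma_{m+1}-1)$ into the first product to start it at $i = m+1$. Similarly, in the first bracket of the denominator the term $(\gamma_{m+1} - 1)\prod_{i=m+2}^N(\gamma_i - 1) = \prod_{i=m+1}^N(\gamma_i - 1)$, so the first denominator factor becomes $\prod_{i=m+1}^N \gamma_i - \prod_{i=m+1}^N (\gamma_i - 1)$. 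The second denominator factor is already $\prod_{i=N-m+1}^N \gamma_i - \prod_{i=N-m+1}^N (\gamma_i - 1)$. Thus for $\omega = 1$ the value takes the clean form
\begin{equation*}
	\alpha_{N,m}^1 = 1 - \frac{\prod_{i=m+1}^N (\gamma_i - 1) \cdot \prod_{i=N-m+1}^N (\gamma_i - 1)}{\bigl(\prod_{i=m+1}^N \gamma_i - \prod_{i=m+1}^N (\gamma_i - 1)\bigr)\bigl(\prod_{i=N-m+1}^N \gamma_i - \prod_{i=N-m+1}^N (\gamma_i - 1)\bigr)}.
\end{equation*}

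Now the symmetry is visible by inspection: the first product-pair in numerator and denominator ranges over $\{m+1,\ldots,N\}$ (a block of length $N-m$), while the second ranges over $\{N-m+1,\ldots,N\}$ (a block of length $m$). Under the swap $m \mapsto N-m$, the first block becomes $\{N-m+1,\ldots,N\}$ and the second becomes $\{m+1,\ldots,N\}$ — that is, the two index-blocks are simply interchanged. Since the fraction is the product of two structurally identical factors (one built from each block) and is symmetric under exchanging these two factors, the value is invariant. Concretely, defining $P(a) := \prod_{i=a}^N(\gamma_i-1)$ and $Q(a) := \prod_{i=a}^N \gamma_i - \prod_{i=a}^N(\gamma_i-1)$, we have $\alpha_{N,m}^1 = 1 - P(m+1)P(N-m+1)/(Q(m+1)Q(N-m+1))$, and replacing $m$ by $N-m$ sends $(m+1, N-m+1) \mapsto (N-m+1, m+1)$, which leaves this product unchanged.

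I do not expect a serious obstacle here; the result is genuinely ``immediate from Formula \eqref{alpha_formula}'' once the $\omega=1$ simplification is carried out. The only point requiring a line of care is confirming that the degenerate $\alpha = 1$ case is consistent with the claimed equality for $m \in \{1,\ldots,\lfloor N/2\rfloor\}$ — that is, that $\gamma_{m+1} \le 1$ forces $\gamma_{N-m+1} \le 1$ as well, which follows since $B_k(r)$ is nondecreasing in $k$ so $\gamma_{N-m+1} \geq \gamma_{m+1}$ when $N - m \geq m$; but here one must check the direction carefully, and in fact in the degenerate branch one simply verifies both sides equal $1$ by appealing directly to the condition in Theorem \ref{alpha_formula_thm} rather than to the formula. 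Writing out the index bookkeeping cleanly is the main thing to get right.
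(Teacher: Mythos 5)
Your proposal is correct and takes essentially the same route as the paper: the paper offers no separate argument, asserting that the corollary ``follows immediately from Formula \eqref{alpha_formula}'', and your absorption of $(\gamma_{m+1}-1)$ into the first product to expose the block-swap symmetry $P(m+1)P(N-m+1)/\bigl(Q(m+1)Q(N-m+1)\bigr)$ is exactly the computation behind that remark. Your worry about the degenerate branch also resolves cleanly: under the submultiplicativity hypothesis of Theorem \ref{alpha_formula_thm}, $\gamma_{m+1}\le 1$ together with $c_0\ge 1$ forces $c_n=0$ for all $n\ge 1$, hence $\gamma_k=1$ for every $k$ and both values equal $1$; alternatively, when $\gamma_{m+1}=1$ the numerator of \eqref{alpha_formula} evaluated at control horizon $N-m$ contains the vanishing factor $\gamma_{m+1}-1$, so the formula itself returns $1$ there.
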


Depicting the corresponding $\alpha_{N,m}^\omega$-values for some $\mathcal{KL}_0$-functions, Fig. \ref{typical_values} illustrates the assertion of Corollary \ref{alpha_symm_cor}. In addition, we observe that increasing the control horizon improves the optimal value $\alpha_{N,m}^\omega$ quantitatively. Hereby, Figure \ref{typical_values} shows the interaction between symmetry and monotonicity properties which is essential for the proof of Theorem \ref{thm:stabmain}. Moreover, adding an additional weight on the final term may lead to a further improvement of the guaranteed stability behaviour. But instead of the equality $\alpha_{N,m}^1 = \alpha_{N,N-m}^1$ we observe the inequality $\alpha_{N,m}^\omega \leq \alpha_{N,N-m}^\omega$ for the setting including an additional weight on the final term for $\mathcal{KL}_0$-functions which are linear in their first argument and satisfy \eqref{3:eq:submultiplicativity}
\begin{figure}[!ht]
	\begin{center}
		\includegraphics[width=6.3cm]{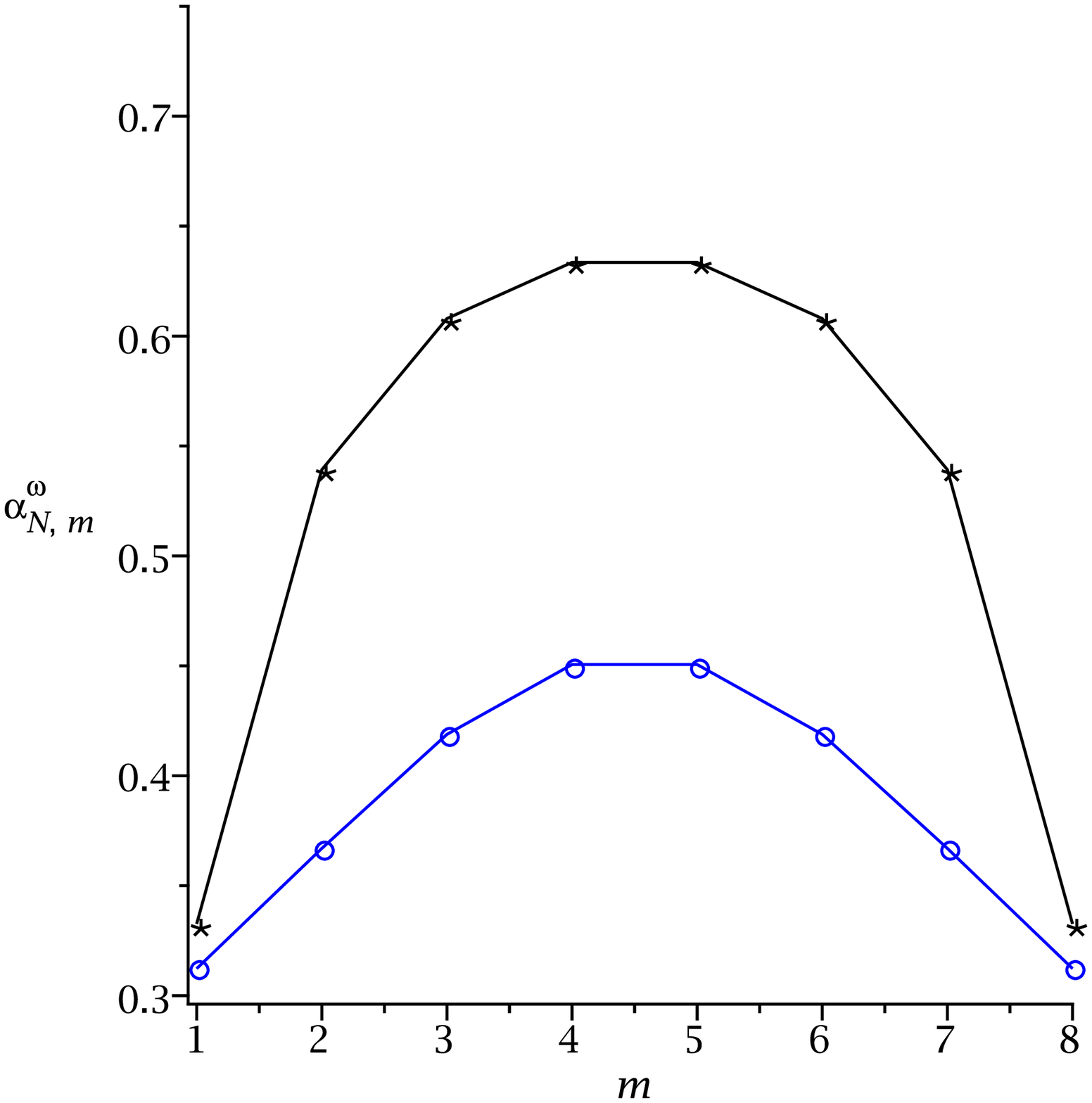}
		\includegraphics[width=6.3cm]{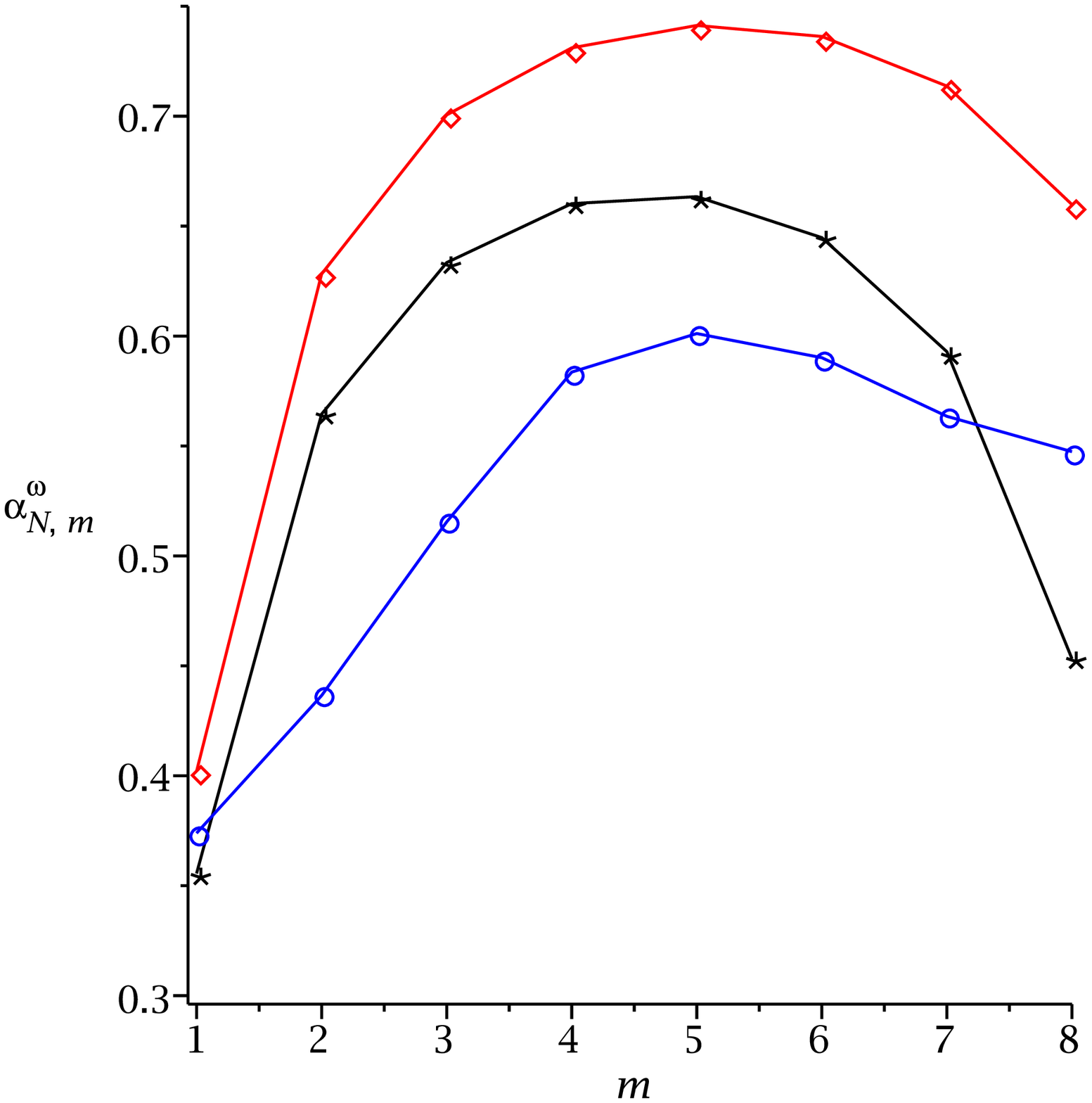}
	\end{center}
	\caption{We consider a $\mathcal{KL}$-function of type \eqref{3:eq:exponential controllability} with parameters $(C,\sigma) = (2,5/8)$, i.e., exponential controllability, and a $\mathcal{KL}_0$-function of type \eqref{3:eq:finite time controllability} satisfying \eqref{3:eq:submultiplicativity} defined by $c_0=1,\, c_1=5/4,\, c_2=3/2,\, c_3=5/4,\, c_4=1/2,\, c_5=1/4$, and $c_6=1/16$, i.e., finite time controllability (*,$\circ$). On the left we have illustrated the corresponding $\alpha_{9,\cdot}^1$-values. On the right we have depicted the same functions with final weights, i.e., $\omega = 5/4,\, 2$ for the exponentially stable case and $\omega = 2$ for finite time controllability (*,$\diamond$,$\circ$).}
	\label{typical_values}
\end{figure}

In the remainder of this subsection we prove Proposition \ref{propositionSymmetry} and demonstrate that a generalization including $\mathcal{KL}_0$-functions of type \eqref{3:eq:finite time controllability} not satisfying the given assumptions is not possible. Consequently, we aim at establishing $\alpha^\omega_{N,m} \leq \alpha^\omega_{N,N-m}$ for $\omega > 1$. Note that, if the necessary and sufficient condition $\gamma_{m+1} - \omega \leq 0$ holds, then Formula \eqref{alpha_formula} solely provides values $\alpha_{N,m}^\omega$ greater or equal one. Thus, showing the desired inequality using the expressions provided by Formula \eqref{alpha_formula} covers the assertion in the absense of the condition $\gamma_{m+1} - \omega \leq 0$. Hence, $\alpha^\omega_{N,N-m}-\alpha^\omega_{N,m} \geq 0$ is equivalent to
{\small\begin{eqnarray*}
	& & (\gamma_{N-m+1} - 1)(\gamma_{m+1}-\omega) \left[ \prod_{i=N-m+1}^N \gamma_i - (\gamma_{N-m+1}-\omega) \prod_{i=N-m+2}^N (\gamma_i-1) \right] \left[ \prod_{i=m+1}^N \gamma_i - \prod_{i=m+1}^N (\gamma_i - 1) \right] \\
	& \geq & (\gamma_{N-m+1} - \omega)(\gamma_{m+1}-1) \left[ \prod_{i=m+1}^N \gamma_i - (\gamma_{m+1}-\omega) \prod_{i=m+2}^N (\gamma_i-1) \right] \left[ \prod_{i=N-m+1}^N \gamma_i - \prod_{i=N-m+1}^N (\gamma_i - 1) \right].
\end{eqnarray*}}
Expanding these terms and reducing by $(\omega - 1) > 0$ leads to
{\small\begin{equation} \label{symmetry_ineq}
	\prod_{i=m+1}^{N-m} \gamma_i \prod_{i=N-m+1}^N (\gamma_i - 1) (\gamma_{N-m+1} - \omega) + (\gamma_{m+1} - \gamma_{N-m+1}) \prod_{i=m+1}^N \gamma_i - (\gamma_{m+1} - \omega) \prod_{i=m+1}^N (\gamma_i - 1) \geq 0.
\end{equation}}
These preliminary considerations enable us to derive results referring to the above mentioned symmetry property of $\alpha_{N,m}^\omega$ with respect to the control horizon $m$. First, we assume finite time controllability. This case is completely characterized by Lemma \ref{symmetry_omega_c1c2c3} and Remark \ref{symmetry_omega_c1c2c3_neg}.
\begin{lemma}\label{symmetry_omega_c1c2c3}
	Assume that the $\mathcal{KL}_0$-function is of type \eqref{3:eq:finite time controllability} satisfying \eqref{3:eq:submultiplicativity}. In addition, let $c_0,\, c_1,\, c_2 \geq 0$ and $c_n = 0$ for all $n \in \mathbb{N}_{\geq 3}$. Then $\alpha^\omega_{N,N-m} - \alpha^\omega_{N,m} \geq 0$ holds for $m < N-m$, $m,N \in \mathbb{N}$.
\end{lemma}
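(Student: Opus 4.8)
The goal is to establish inequality \eqref{symmetry_ineq} under the stated assumptions, since the preliminary reductions in the text have already shown that $\alpha^\omega_{N,N-m} - \alpha^\omega_{N,m} \geq 0$ is equivalent to \eqref{symmetry_ineq}. The plan is to exploit the very special structure imposed by $c_n = 0$ for $n \geq 3$, which makes the quantities $\gamma_k = B_k(r)/r = \sum_{n=0}^{k-1} c_n$ (with the final-weight modification from \eqref{3:eq:definition B_N_omega}) stabilize very quickly. Concretely, for $k \geq 4$ one has $\gamma_k = c_0 + c_1 + c_2 + (\omega-1)\cdot 0 = c_0 + c_1 + c_2 =: \Gamma$, a constant independent of $k$; only the small-index values $\gamma_1, \gamma_2, \gamma_3$ (and the $\omega$-weighted boundary terms) differ. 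This is the structural fact I would extract first, writing down explicitly $\gamma_1 = c_0$, $\gamma_2 = c_0 + c_1$, $\gamma_3 = c_0+c_1+c_2$, and $\gamma_k = \Gamma$ for $k \geq 3$ (taking care of where the final weight $\omega$ enters when an index coincides with the horizon endpoint).

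Next I would substitute these values into \eqref{symmetry_ineq}. The key simplification is that the long products $\prod_{i=m+1}^{N} \gamma_i$, $\prod_{i=m+1}^{N}(\gamma_i-1)$, $\prod_{i=N-m+1}^{N}(\gamma_i-1)$, and the middle block $\prod_{i=m+1}^{N-m}\gamma_i$ all collapse into powers of $\Gamma$ and $\Gamma-1$ times a bounded number of "edge" factors involving $\gamma_{m+1}$, $\gamma_{N-m+1}$, and their relation to $\omega$. Because $m < N-m$ forces $N-m+1 \geq m+2 \geq 3$, the index $\gamma_{N-m+1}$ lies in the stabilized regime $\gamma_{N-m+1} = \Gamma$ whenever $N-m+1 \geq 3$, whereas $\gamma_{m+1}$ may still be one of the small values. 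I would organize the computation by cases on whether $m+1 \in \{1,2,3\}$ versus $m+1 \geq 3$, since only finitely many configurations of the edge indices survive. In each case the inequality \eqref{symmetry_ineq} reduces to a polynomial inequality in $c_0, c_1, c_2, \omega$ with all terms controlled by the common factor structure, and the nonnegativity should follow from $\omega \geq 1$ together with the submultiplicativity constraint \eqref{3:eq:submultiplicativity}, which here reads $c_{n+m} \leq c_n c_m$ and pins down admissible relations among $c_0, c_1, c_2$ (in particular $c_0 \geq 1$, forcing $\gamma_i - 1 \geq 0$ throughout so that no sign flips occur in the products).

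The main obstacle I anticipate is the bookkeeping at the boundary indices: the middle term $(\gamma_{m+1} - \gamma_{N-m+1})\prod_{i=m+1}^N \gamma_i$ in \eqref{symmetry_ineq} carries the sign information, and since $\gamma$ is nondecreasing we have $\gamma_{m+1} - \gamma_{N-m+1} \leq 0$, so this term is a \emph{negative} contribution that must be dominated by the other two. I would therefore aim to factor out the largest common power of $\Gamma-1$ (or $\Gamma$) from all three summands and reduce \eqref{symmetry_ineq} to comparing the remaining low-degree edge expressions, showing that the positive first and the sign-controlled third term outweigh the negative middle term. The delicate point is ensuring the factorization is valid (no division by a possibly-zero factor such as $\Gamma - 1$ in degenerate cases like $c_0 = 1, c_1 = c_2 = 0$), which I would handle by treating such degenerate subcases separately and directly. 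Once the reduction to a finite, explicitly bounded polynomial inequality is achieved, verifying nonnegativity under $\omega \geq 1$ and \eqref{3:eq:submultiplicativity} is routine.
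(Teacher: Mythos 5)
Your plan starts from a false premise: you take it as given that the lemma is \emph{equivalent} to inequality \eqref{symmetry_ineq}. That reduction is only valid when both $\alpha$-values are actually given by Formula \eqref{alpha_formula}, i.e., when $\gamma_{m+1}-\omega>0$ and $\gamma_{N-m+1}-\omega>0$. In the regime $\gamma_{m+1}\leq\omega$ one has $\alpha_{N,m}^\omega=1$ by Theorem \ref{alpha_formula_thm}, and \eqref{symmetry_ineq} can then be outright false: for $m\geq 3$ both $\gamma_{m+1}$ and $\gamma_{N-m+1}$ equal $\Gamma:=c_0+c_1+c_2$, and \eqref{symmetry_ineq} collapses to
\begin{equation*}
(\Gamma-\omega)\left[\prod_{i=m+1}^{N-m}\gamma_i-\prod_{i=m+1}^{N-m}(\gamma_i-1)\right]\prod_{i=N-m+1}^{N}(\gamma_i-1)\;\geq\;0,
\end{equation*}
which is negative whenever $\Gamma<\omega$. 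So your case analysis would run into a statement that cannot be proved. What the lemma requires in that regime is instead the implication $\gamma_{m+1}-\omega\leq 0\Rightarrow\gamma_{N-m+1}-\omega\leq 0$ (so that \emph{both} values equal $1$); this is the first step of the paper's proof, it uses \eqref{3:eq:submultiplicativity} nontrivially for $m=1$ (namely $\gamma_N-\omega\leq(1+c_1)(c_0+c_1\omega-\omega)\leq 0$), and it is precisely where the hypothesis $c_n=0$ for $n\geq 3$ does its essential work: Remark \ref{symmetry_omega_c1c2c3_neg} shows that with $c_3>0$ the lemma fails through exactly this case. Your proposal skips this entirely (as well as the easy sub-case $\gamma_{N-m+1}\leq\omega<\gamma_{m+1}$, where $\alpha_{N,N-m}^\omega=1$ makes the claim trivial).

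Your sign bookkeeping for \eqref{symmetry_ineq} is also backwards. In the only nontrivial regime $\omega>1$ (for $\omega=1$ Corollary \ref{alpha_symm_cor} gives equality), the sequence $\gamma_k=\sum_{n=0}^{k-2}c_n+\omega c_{k-1}$ is \emph{not} nondecreasing: $\gamma_3=c_0+c_1+\omega c_2\geq\Gamma=\gamma_k$ for all $k\geq 4$. Consequently the middle term $(\gamma_{m+1}-\gamma_{N-m+1})\prod_{i=m+1}^N\gamma_i$ is zero for $m\geq 3$ and equals $(\omega-1)c_2\prod_{i=m+1}^N\gamma_i\geq 0$ for $m=2$; the term that genuinely has to be dominated is the third one, $-(\gamma_{m+1}-\omega)\prod_{i=m+1}^N(\gamma_i-1)$. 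The paper's $m=2$ case hinges on the identity $\gamma_{m+1}-\omega=(\omega-1)c_2+(\gamma_{N-m+1}-\omega)$, i.e., on exploiting the \emph{positivity} of the term you plan to dominate as a negative contribution. Apart from these two points, your overall scheme (explicit values of $\gamma_k$, case split $m=1$, $m=2$, $m\geq 3$, reduction to polynomial inequalities in $c_0,c_1,c_2,\omega$ settled by $\omega\geq 1$, $c_0\geq 1$ and \eqref{3:eq:submultiplicativity}) does coincide with the paper's strategy for the main case, but as written the proposal would not close without repairing the missing case and the sign analysis.
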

\begin{proof}
	We show that assuming $\gamma_{m+1} - \omega \leq 0$ implies $\gamma_{N-m+1} - \omega \leq 0$ for $m < N-m$. Since $\gamma_{m+1} = \sum_{n=0}^{m-1} c_n + \omega c_m$ the assertion holds for $m \geq 2$. Thus, we restrict ourselves to $m=1$, i.e., $c_0 + c_1 \omega - \omega \leq 0$. Then it follows
	\begin{equation*}
		\gamma_{N-m+1} - \omega \leq c_0 + c_1 + c_2 \omega - \omega + c_1 \omega - c_1 \omega \stackrel {\eqref{3:eq:submultiplicativity}} {\leq} (c_0 + c_1 \omega - \omega) + c_1 (c_0 + c_1 \omega - \omega) \leq 0. 
	\end{equation*}
	This covers the case $\gamma_{m+1} - \omega \leq 0$. Hence, we assume $\gamma_{m+1} - \omega > 0$. Note that Corollary \ref{alpha_symm_cor} shows the assertion for $\omega = 1$. Thus, we restrict ourselves to the case $\omega > 1$. Moreover, we suppose $\gamma_{N-m+1} - \omega > 0$ because otherwise $\alpha^\omega_{N,N-m} = 1$ and the assertion holds trivially. We make a case differentiation with respect to $m$. For $m>2$ we have $\gamma_{m+1} = \gamma_{N-m+1}$. Thus, \eqref{symmetry_ineq} is equivalent to
	\begin{equation*}
		(\gamma_{m+1} - \omega) \prod_{i=N-m+1}^N (\gamma_i - 1) \left[ \prod_{i=m+1}^{N-m} \gamma_i - \prod_{i=m+1}^{N-m} (\gamma_i - 1) \right] \geq 0
	\end{equation*}
	implying the assertion. For $m=2$ we obtain $\gamma_{m+1} = \gamma_{3} = c_0 + c_1 + \omega c_2$ and $\gamma_{N-m+1} = \sum_{n=0}^2 c_n$. Consequently, $(\gamma_{m+1} - \gamma_{N-m+1}) = (\omega - 1) c_2$ and $(\gamma_{m+1}-\omega) = (\omega - 1) c_2 + (\gamma_{N-m+1} - \omega)$ holds. Using these expressions for the corresponding terms in \eqref{symmetry_ineq} provides $\alpha^\omega_{N,N-m} \geq \alpha^\omega_{N,m}$. For $m=1$ we have $\gamma_{m+1} = \gamma_{2} = c_0 + \omega c_1$ and $\gamma_{N-m+1} = \gamma_{N}$. Thus, \eqref{symmetry_ineq} is equivalent to
	\begin{equation*}
		\left( \gamma_2(\gamma_N-1)(\gamma_N-\omega)+(\gamma_2-\gamma_N)\gamma_2 \gamma_N \right) \prod_{i=3}^{N-1} \gamma_i - \left( (\gamma_2 - \omega)(\gamma_2 - 1)(\gamma_N-1) \right) \prod_{i=3}^{N-1} (\gamma_i - 1) \geq 0.
	\end{equation*}
	We show that the coefficient of $\prod_{i=3}^{N-1} \gamma_i$ is positive, i.e., $\gamma_2 (\gamma_N(\gamma_2 - \omega - 1) + \omega) \geq 0$.
	To this end, it suffices to investigate the case $\gamma_2 - \omega - 1 < 0$. In consideration of \eqref{3:eq:submultiplicativity}, we estimate this expression as follows
	\begin{eqnarray*}
		\gamma_2 [\gamma_N(\gamma_2 - \omega - 1) + \omega] & \geq & \gamma_2 [ (c_0 + c_1 + c_1^2 \omega)(c_0 + c_1 \omega - \omega - 1) + \omega] \\
		& = & \gamma_2 [ (c_0 - 1)(c_0 + c_1 \omega - \omega + c_1 ) + c_1^2 \omega (c_0 + c_1 \omega - \omega) ] \geq 0.
	\end{eqnarray*}
	Hence, it suffices to show that the coefficient of $\prod_{i=3}^{N-1} \gamma_i$ is greater or equal than the coefficient of $\prod_{i=3}^{N-1} (\gamma_i - 1)$. This leads to the inequality $\gamma_2(\gamma_2-1) - \omega(\gamma_N - 1) \geq 0$. Here, we estimate $\gamma_N \leq c_0 + c_1 + \omega c_2$ and obtain $\omega^2 (c_1^2 - c_2) + (c_0 - 1)(c_0 + c_1 \omega - \omega + c_1 \omega)$ which holds according to the assumptions $\gamma_2 - \omega > 0$ and \eqref{3:eq:submultiplicativity}.
\end{proof}
\begin{remark}\label{symmetry_omega_c1c2c3_neg}
	Note that Lemma \ref{symmetry_omega_c1c2c3} does not hold for arbitrary $\mathcal{KL}_0$-functions of type \eqref{3:eq:finite time controllability} which satisfy \eqref{3:eq:submultiplicativity}. Consider, e.g., $c_0 = 1,\, c_1 = 3/2,\, c_2 = 2/3,\, c_3 = 1$ and $c_n = 0$ for $n \geq 4$. For $N=5$ and $m=2$ the necessary and sufficient condition for $\alpha^\omega_{5,2} = 1$ are satisfied for $\omega \geq 15/2$, i.e., $\gamma_2 - \omega \leq 0$. However, it holds $\gamma_{N-m+1} - \omega = \sum_{n=0}^2 c_n > 0$ which implies $\alpha^\omega_{5,3} < 1 = \alpha^\omega_{5,2}$.
\end{remark}

In the sense of Remark \ref{symmetry_omega_c1c2c3_neg} the assertion of Lemma \ref{symmetry_omega_c1c2c3} is strict. Hence, the deduced results only hold for a subset of the class of finite time controllable systems which satisfy \eqref{3:eq:submultiplicativity}. In contrast to that, we are able to derive much more general results for the exponentially controllable case. To this end, assume that the $\mathcal{KL}_0$-function is of type \eqref{3:eq:exponential controllability}. We begin our analysis with the case $\gamma_{m+1} - \omega \leq 0$, i.e., $\alpha_{N,m}^\omega = 1$. This condition guarantees the preservation of the symmetry property stated in Corollary \ref{alpha_symm_cor} and implies $\alpha_{N,\widetilde{m}}^\omega = 1$ for all $\widetilde{m} \in \{m+1,\ldots,N-1\}$.
\begin{lemma}\label{suff_cond_exp}
	Let the $\mathcal{KL}_0$-function $\beta$ be of type \eqref{3:eq:exponential controllability}. Then $\gamma_{m+1} - \omega \leq 0$ implies $\gamma_{\widetilde{m}+1} - \omega \leq 0$ for all $\widetilde{m} \in \{m+1,\ldots,N-1\}$.
\end{lemma}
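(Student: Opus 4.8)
The plan is to exploit the fact that for a $\mathcal{KL}_0$-function of exponential type \eqref{3:eq:exponential controllability} the quantities $\gamma_k = B_k(r)/r$ are available in closed form. With $\beta(r,n)=C\sigma^n r$ and the weighted sum \eqref{3:eq:definition B_N_omega} one computes $\gamma_k = C\frac{1-\sigma^{k-1}}{1-\sigma}+C\omega\sigma^{k-1}$; in particular $\gamma_{m+1}=C\frac{1-\sigma^m}{1-\sigma}+C\omega\sigma^m$. Since the claim asks us to propagate the property $\gamma_{k}-\omega\le 0$ from index $k=m+1$ to all larger indices, the natural idea is to control the monotonicity of the sequence $(\gamma_k)_k$.

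First I would record the one-step difference $\gamma_{k+1}-\gamma_k = C\sigma^{k-1}\bigl(1-\omega(1-\sigma)\bigr)$. The decisive observation is that the factor $1-\omega(1-\sigma)$ does not depend on $k$, so $(\gamma_k)$ is monotone throughout, and it is non-increasing precisely when $\omega\ge 1/(1-\sigma)$. Hence in the regime $\omega\ge 1/(1-\sigma)$ the lemma is immediate: $\gamma_{\widetilde m+1}\le\gamma_{m+1}\le\omega$ for every $\widetilde m\ge m$.

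The main obstacle is the complementary regime $\omega<1/(1-\sigma)$, where $(\gamma_k)$ is strictly increasing and the conclusion cannot follow from monotonicity alone. The key point that resolves this is that the hypothesis $\gamma_{m+1}\le\omega$ is itself incompatible with $\omega<1/(1-\sigma)$ once $C\ge 1$ and $m\ge 1$. Concretely, I would rewrite $\gamma_{m+1}\le\omega$ as $C\frac{1-\sigma^m}{1-\sigma}\le\omega(1-C\sigma^m)$; because the left-hand side is strictly positive for $m\ge 1$, this forces $1-C\sigma^m>0$, and dividing yields
\[
\omega \ \ge\ \frac{C(1-\sigma^m)}{(1-\sigma)(1-C\sigma^m)}.
\]
A short estimate shows $\frac{C(1-\sigma^m)}{1-C\sigma^m}\ge 1$ — this inequality is equivalent to $C\ge 1$ — so in fact $\omega\ge 1/(1-\sigma)$.

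Putting the two observations together gives a clean unified argument: the hypothesis $\gamma_{m+1}-\omega\le 0$ (with $C\ge 1$ and control horizon $m\ge 1$) already implies $\omega\ge 1/(1-\sigma)$, which by the difference formula makes $(\gamma_k)$ non-increasing, and therefore $\gamma_{\widetilde m+1}\le\gamma_{m+1}\le\omega$ for all $\widetilde m\in\{m+1,\dots,N-1\}$. I expect the only real care needed is the sign bookkeeping in the rearrangement (checking $1-C\sigma^m>0$ before dividing) and keeping in mind that the statement is applied for $m\ge 1$, which is exactly what excludes the degenerate strictly increasing case.
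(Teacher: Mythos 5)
Your proof is correct, and its second half takes a genuinely different route from the paper's. Both arguments begin the same way: writing the hypothesis as $C\frac{1-\sigma^m}{1-\sigma}\le\omega(1-C\sigma^m)$, positivity of the left-hand side (this is where $m\ge 1$ enters) forces $1-C\sigma^m>0$ and hence $\omega\ge C(1-\sigma^m)/\bigl((1-\sigma)(1-C\sigma^m)\bigr)$. The paper then substitutes this bound directly into $\gamma_{\widetilde m+1}-\omega$ for each $\widetilde m>m$ (using $C\sigma^{\widetilde m}<C\sigma^m<1$) and reduces the result to the pairwise inequality $(C-1)\sigma^{\widetilde m}\le(C-1)\sigma^m$. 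You instead upgrade the bound once and for all: since $C(1-\sigma^m)/(1-C\sigma^m)\ge 1$ precisely because $C\ge 1$, the hypothesis forces $\omega\ge 1/(1-\sigma)$, and the difference formula $\gamma_{k+1}-\gamma_k=C\sigma^{k-1}\bigl(1-\omega(1-\sigma)\bigr)$ then makes the entire sequence $(\gamma_k)$ non-increasing, so $\gamma_{\widetilde m+1}\le\gamma_{m+1}\le\omega$ follows for all $\widetilde m\ge m$ at once. Your route is more structural: it exhibits the hypothesis as possible only in the large-final-weight regime $\eta:=1+\sigma\omega-\omega\le 0$ that the paper exploits elsewhere (Proposition \ref{propositionMonotonicity}, Corollary \ref{qualitativ_properties_monotonicity_eta}), and since every step is non-strict it also covers the boundary case $C=1$, $\omega=1/(1-\sigma)$, where $\gamma_{m+1}-\omega=0$ genuinely occurs; the paper's proof treats the hypothesis as strict and its final inequality degenerates to an equality when $C=1$, so that case is glossed over there (harmlessly, since the lemma only claims $\le 0$). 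What the paper's pairwise computation buys in exchange is strictness, $\gamma_{\widetilde m+1}-\omega<0$ whenever $C>1$, which your monotonicity argument does not directly provide but which the statement does not require.
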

\begin{proof}
	We have $\gamma_{m+1} - \omega = \sum_{n=0}^{m-1} c_n + (c_m - 1) \omega < 0$ which implies $c_m = C \sigma^m < 1$ and is equivalent to $\omega > C (1-\sigma^m) / (1-\sigma)(1-C\sigma^m) > 1$. Thus, it suffices to establish the following inequality based on the obtained estimate for $\omega$:
	\begin{equation*}
		\gamma_{\widetilde{m}+1} - \omega \leq \frac {C(1-\sigma^{\widetilde{m}})}{1-\sigma} - \frac {C (1-\sigma^m)} {(1-\sigma)(1-C\sigma^m)} (1 - C \sigma^{\widetilde{m}}) < 0.
	\end{equation*}
	The second inequality is equivalent to $(C-1) \sigma^{\widetilde{m}} < (C-1) \sigma^m$ showing the assertion for $\widetilde{m} > m$.
\end{proof}

Proposition \ref{exp_stab_C1} and Lemma \ref{suff_cond_exp} enable us to carry out a complete symmetry analysis of the optimal values obtained in Theorem \ref{alpha_formula_thm} (for a proof of the following lemma we refer to Subsection \ref{AppendixProofSymmetry}). Note that we do not distinguish whether an additional weight on the final term is included and do note impose any restrictions with respect to the considered $\mathcal{KL}$-functions.
\begin{lemma}\label{symmetry_omega_exp}
	Let the $\mathcal{KL}_0$-function $\beta$ be of type \eqref{3:eq:exponential controllability}. Then $\alpha^\omega_{N,N-m} - \alpha^\omega_{N,m} \geq 0$ holds for $m < N-m$, $m \in \mathbb{N}_{>0}$, $N \in \mathbb{N}_{\geq 3}$.
\end{lemma}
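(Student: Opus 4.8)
The plan is to peel off the degenerate regimes and then reduce the whole statement to the single algebraic inequality \eqref{symmetry_ineq}, which I would settle by inserting the closed form of $\gamma_k$. First I would note that the two boundary cases are already handled by earlier results: for $\omega = 1$ Corollary \ref{alpha_symm_cor} gives the stronger equality $\alpha^\omega_{N,m} = \alpha^\omega_{N,N-m}$, while for $C = 1$ Proposition \ref{exp_stab_C1} shows that $\alpha^\omega_{N,m}$ is independent of $m$, so the difference vanishes identically. Hence I may assume throughout that $C > 1$ and $\omega > 1$.

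Next I would dispose of the situations in which one of the two values saturates at $1$. Since $m < N-m$ forces $N-m \in \{m+1,\ldots,N-1\}$, Lemma \ref{suff_cond_exp} shows that $\gamma_{m+1} - \omega \le 0$ implies $\gamma_{N-m+1} - \omega \le 0$; in that case Theorem \ref{alpha_formula_thm} yields $\alpha^\omega_{N,m} = \alpha^\omega_{N,N-m} = 1$ and there is nothing to prove. If instead $\gamma_{m+1} - \omega > 0$ but $\gamma_{N-m+1} - \omega \le 0$, then $\alpha^\omega_{N,N-m} = 1 \ge \alpha^\omega_{N,m}$ trivially, because every value produced by \eqref{alpha_formula} with $\gamma_{m+1}-\omega>0$ is at most $1$ (the subtracted fraction has nonnegative numerator and, since $\gamma_i > \gamma_i - 1 \ge 0$ and $\gamma_{m+1} > \gamma_{m+1}-\omega$, positive denominator). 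The only remaining case is $\gamma_{m+1} - \omega > 0$ and $\gamma_{N-m+1} - \omega > 0$; here both values are given by the fraction in \eqref{alpha_formula}, and the reduction already carried out in the text (clearing denominators and dividing by $\omega - 1 > 0$) shows that $\alpha^\omega_{N,N-m} - \alpha^\omega_{N,m} \ge 0$ is equivalent to inequality \eqref{symmetry_ineq}. So the entire lemma comes down to proving \eqref{symmetry_ineq} for $\beta$ of type \eqref{3:eq:exponential controllability}.

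To attack \eqref{symmetry_ineq} I would substitute the closed form
\begin{equation*}
	\gamma_k = \frac{C\,(1 - \eta\,\sigma^{k-1})}{1 - \sigma}, \qquad \eta := 1 - \omega(1-\sigma),
\end{equation*}
read off exactly as in the proof of Proposition \ref{exp_stab_C1} (there written for $C=1$). Pulling the common factors $C/(1-\sigma)$ out of each product turns the three summands of \eqref{symmetry_ineq} into polynomials in $\sigma$, $C$ and $\eta$, and I would use the telescoping identity of Lemma \ref{appendix_technical_lemma:1} to rewrite the combinations of the form $\prod\gamma_i - \prod(\gamma_i - 1)$ that occur, just as was done inside the proof of Theorem \ref{alpha_formula_thm}. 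After this the inequality collapses to a factorwise comparison of the product over the ``middle'' block $i \in \{m+1,\ldots,N-m\}$ against the two ``end'' blocks, the crucial difference being governed by
\begin{equation*}
	\gamma_{m+1} - \gamma_{N-m+1} = \frac{C\eta}{1-\sigma}\,(\sigma^{N-m} - \sigma^{m}).
\end{equation*}

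The main obstacle is the sign of $\eta$. For $\omega < 1/(1-\sigma)$ one has $\eta > 0$, so $\gamma_{m+1} < \gamma_{N-m+1}$ and the middle term of \eqref{symmetry_ineq} is negative, whereas for $\omega \ge 1/(1-\sigma)$ one has $\eta \le 0$ and the sign flips; consequently I expect a case split on $\eta \gtrless 0$ (equivalently on $\omega$ relative to $1/(1-\sigma)$), and in each regime a careful factorwise estimate showing that the two positive contributions dominate the negative one. The bookkeeping of the many $\sigma$-power products, while keeping every factor $\gamma_i$ and $\gamma_i - 1$ positive on the feasible range, is where the real work lies; everything upstream is the routine reduction sketched above.
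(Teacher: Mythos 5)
Your preliminary reductions are all correct and coincide with the paper's: the cases $\omega=1$ (Corollary \ref{alpha_symm_cor}), $\gamma_{m+1}-\omega\le 0$ (via Lemma \ref{suff_cond_exp}), and $\gamma_{N-m+1}-\omega\le 0$ are disposed of exactly as in the paper, and the reduction of the remaining case to inequality \eqref{symmetry_ineq} (clearing denominators, dividing by $\omega-1>0$) is the same. The gap is that everything after that point --- which is the actual content of the lemma --- is only sketched, and the sketched strategy is not viable. You propose to settle \eqref{symmetry_ineq} by a ``factorwise comparison'' in which ``the two positive contributions dominate the negative one.'' But the inequality has no slack at $C=1$: by Proposition \ref{exp_stab_C1} one has $\alpha^\omega_{N,m}=\alpha^\omega_{N,N-m}=1-\eta\sigma^{N-1}$ there, so \eqref{symmetry_ineq} holds with \emph{equality} at $C=1$ for every $\sigma$, $\omega$, $N$, $m$. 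Any termwise or factorwise domination estimate that discards information will therefore fail in a neighbourhood of $C=1$; the proof must be exact at that point. This is precisely why you cannot treat $C=1$ as a boundary case to be dismissed (as you do in your first paragraph) --- it has to serve as the anchor of the argument.

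What the paper actually does in the hard regime $\eta>0$, $C>1$ is structurally different and none of it appears in your proposal. First, the proof is an induction on $N$ with base case $N=2m+1$ and induction step $N\to N+1$. Second, in both the base case and the step, the inequality is rewritten as a comparison $p(C)\ge q(C)$ of two monic polynomials of degree $m+2$ in the overshoot $C$; Proposition \ref{exp_stab_C1} supplies the anchor $p(1)=q(1)$, one verifies that all roots of $p$ are real with exactly one negative and the positive ones lying in $(0,1)$, and one checks that the root of $p^{(m+1)}$ lies strictly to the left of that of $q^{(m+1)}$. The bespoke root-comparison result, Lemma \ref{lemma_polynomial} (proved by its own induction on the degree), then yields $p(C)>q(C)$ for all $C>1$. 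Your proposal invokes neither the induction on $N$, nor Lemma \ref{lemma_polynomial}, nor the use of the $C=1$ identity as the point of tangency between the two sides, so the core of the proof is missing and the route you indicate in its place would not close.
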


\subsection{Monotonicity Properties}\label{subsectionMonotonicity}

Apart from the symmetry proven in Corollary \ref{alpha_symm_cor} and its generalized counterparts for the szenario including an additional weight on the final term, cf. Lemmata \ref{symmetry_omega_c1c2c3} and \ref{symmetry_omega_exp}, one also observes a certain monotonicity property: we have $\alpha_{N,m+1}^\omega \ge \alpha_{N,m}^\omega$ for $m=1,\ldots, \lfloor N/2 \rfloor-1$. This is a very desirable feature because it implies -- in combination with the derived symmetry results -- that if the stability condition in Theorem \ref{stabthm} holds for $m^\star=1$ then is also holds for all $m^\star\le N-1$, cf.\ Theorem \ref{thm:stabmain}. However, the next example shows that this monotonicity property does not always hold.
\begin{example}\label{noMonotonicity_ex}
	We consider the $\mathcal{KL}_0$-functions $\beta_1$ and $\beta_2$ of type \eqref{3:eq:finite time controllability} defined by $c_0 = 1.24,\, c_1 = 1.14,\, c_2 = 1.04$ and $c_i = 0$ for all $i \geq 3$ for $\beta_1$ and $c_0 = 1,\, c_1=1.2,\, c_2=1.1,\, c_3=1.1,\, c_4=1.2,\, c_5=1,\, c_6=0.75,\, c_7=0.25$ and $c_i = 0$ for all $i \geq 8$ for $\beta_2$. Both functions satisfy condition \eqref{3:eq:submultiplicativity} and $\beta_1$ is, in addition, monotonically decreasing. The corresponding values $\alpha_{N,m}^1$ in Fig. \ref{picture_noMonotonicity} show that neither function satisfies $\alpha_{N,m+1}^1 \ge \alpha_{N,m}^1$ for $m=1,\ldots, \lfloor N/2 - 1 \rfloor$. 
\end{example}
\begin{figure}[!ht]
	\begin{center}
		\includegraphics[width=4.8cm]{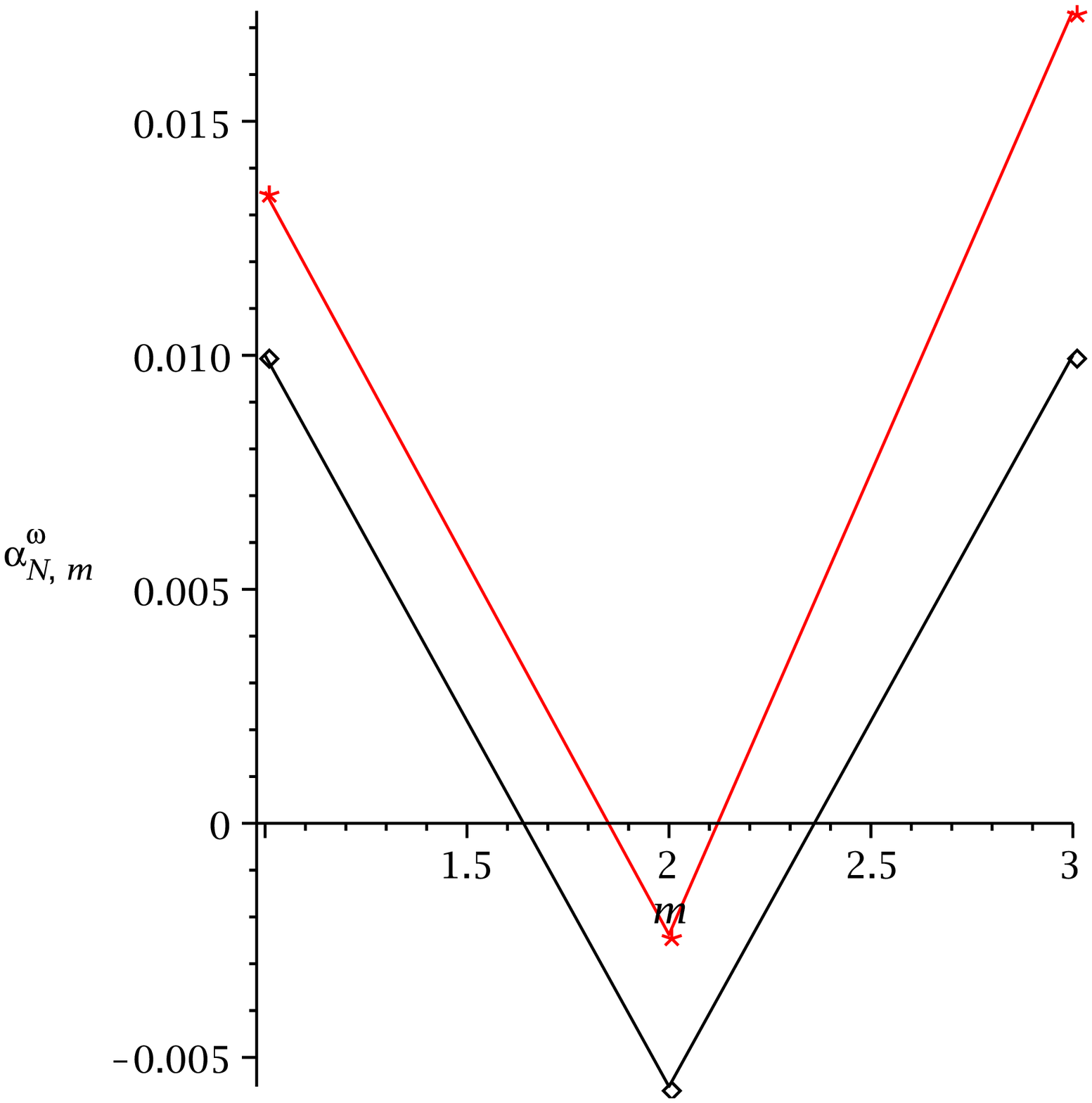}
		\hspace{12mm}
		\includegraphics[width=4.8cm]{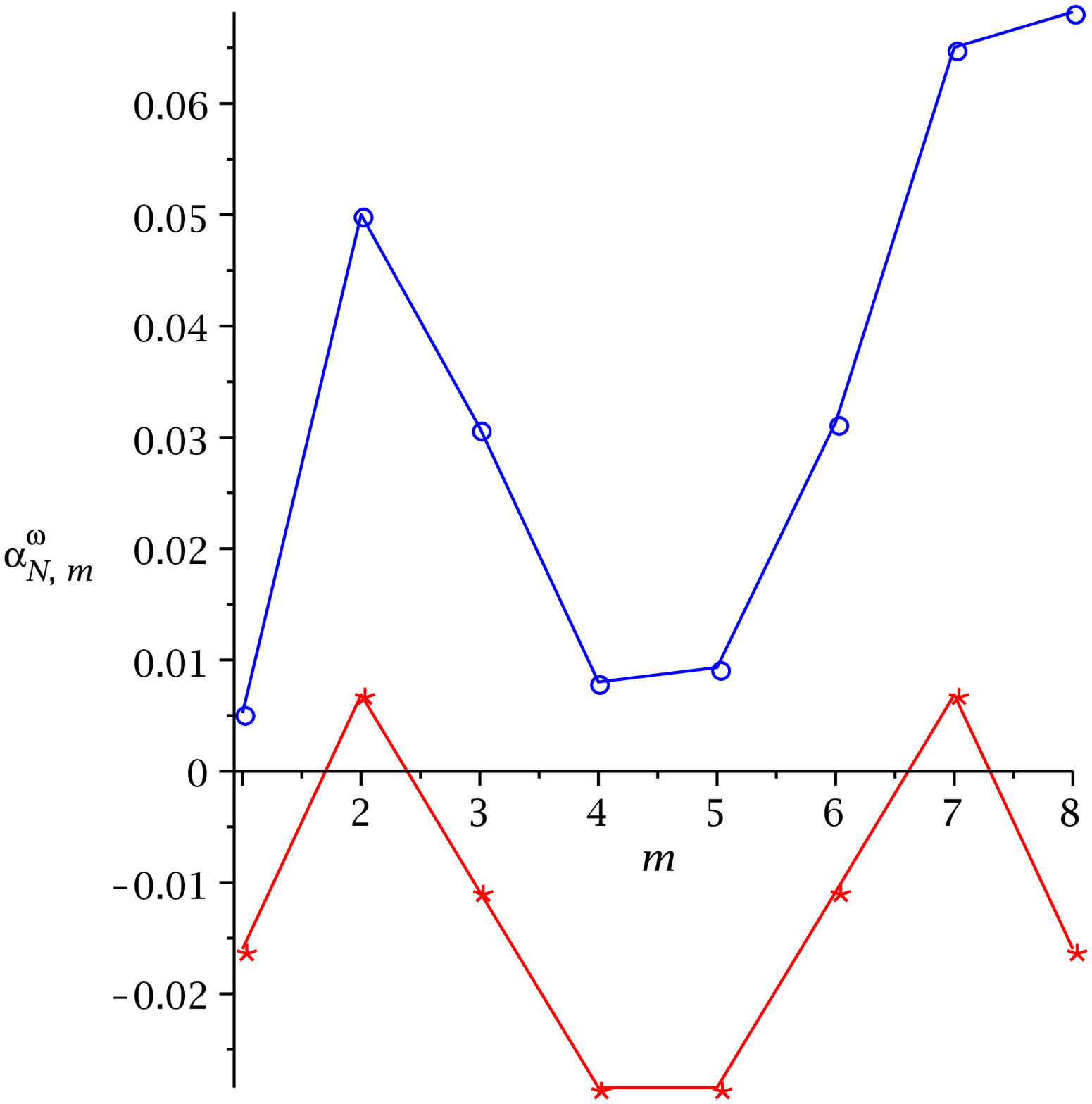}
	\end{center}
	\caption{Visualization of the corresponding $\alpha_{4,m}^1$, $m=1,\ldots,3$ values for $\beta_1$ (on the left with $\omega = 1,\, 1.01$) and $\alpha_{9,m}^1$, $m=1,\ldots,8$ for $\beta_2$ (on the right with $\omega = 1,\, 4/3$) from Example \ref{noMonotonicity_ex}.}
	\label{picture_noMonotonicity}  
\end{figure}

Example \ref{noMonotonicity_ex} shows that the desired monotonicity property does not hold for arbitrary $\mathcal{KL}_0$-functions $\beta$. However, the following lemmata show that monotonicity holds for $\beta$ of type \eqref{3:eq:exponential controllability} and at least for a subset of $\beta$ of type \eqref{3:eq:finite time controllability}. First, we adress monotonicity properties with respect to the control horizon $m$ for $\mathcal{KL}_0$-functions of type \eqref{3:eq:finite time controllability}. More precisely, we aim at establishing $\alpha_{N,m+1}^\omega \geq \alpha_{N,m}^\omega$. As seen in Example \ref{noMonotonicity_ex}, even in the setting without an additional weight on the final term it is not possible to prove this inequality for arbitrary $\mathcal{KL}_0$-functions of type \eqref{3:eq:finite time controllability}. Thus, the following lemma gives a complete analysis.
\begin{lemma}\label{monotonicity_omega_c1c2}
	Assume that the $\mathcal{KL}_0$-function $\beta$ is of type \eqref{3:eq:finite time controllability}. In addition, let $c_0,\, c_1 \geq 0$ and $c_n = 0$ for all $n \in \mathbb{N}_{\geq 2}$. Then $\alpha^\omega_{N,m+1} - \alpha^\omega_{N,m} \geq 0$ holds for $m \in \{1,\ldots,\lfloor N/2 \rfloor - 1\}$ in consideration of condition \eqref{3:eq:submultiplicativity}.
\end{lemma}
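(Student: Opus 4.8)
The plan is to feed the explicit $\gamma_k$-values for this $\beta$ into Formula \eqref{alpha_formula} and to exploit that $\gamma_k$ is constant once $k\ge 3$. From \eqref{3:eq:definition B_N_omega} one reads off $\gamma_1=\omega c_0$, $\gamma_2=c_0+\omega c_1$ and $\gamma_k=c_0+c_1=:\gamma$ for all $k\ge 3$; since \eqref{3:eq:submultiplicativity} together with Assumption \ref{3:ass:controllability} forces $c_0\ge 1$, we have $\gamma\ge 1$ and may assume $\gamma>1$ (otherwise \eqref{alpha_formula} gives $\alpha_{N,m}^\omega\equiv 1$). Note also $\gamma_2=\gamma+(\omega-1)c_1\ge\gamma$. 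Because $m\le\lfloor N/2\rfloor-1$ keeps every index appearing in the products of \eqref{alpha_formula} at least $3$ as soon as the control horizon is $\ge 2$, for every $m\ge 2$ the formula collapses to the one-step expression
\[
\alpha_{N,m}^\omega = 1-\frac{(\gamma-\omega)(\gamma-1)^{N-1}}{\bigl(\gamma^{N-m}-(\gamma-\omega)(\gamma-1)^{N-m-1}\bigr)\bigl(\gamma^m-(\gamma-1)^m\bigr)},
\]
i.e.\ the finite-time one-step formula already encountered in Section \ref{alphasec1} with $\gamma=c_0+c_1$, whereas for $m=1$ only the leading factor $\gamma_2$ survives and
\[
\alpha_{N,1}^\omega = 1-\frac{(\gamma_2-\omega)(\gamma-1)^{N-1}}{\gamma_2\gamma^{N-2}-(\gamma_2-\omega)(\gamma-1)^{N-2}}.
\]
Finally I dispose of the degenerate cases: $\gamma_{m+1}\le\omega$ implies $\gamma_{m+2}\le\omega$ (trivially for $m\ge 2$, and because $\gamma_2\le\omega\Rightarrow\gamma\le\omega$ for $m=1$), so $\alpha_{N,m}^\omega=1$ forces $\alpha_{N,m+1}^\omega=1$; hence it remains to treat the regime $\theta:=\gamma-\omega>0$, where both values are given by the formulas above.

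For the step $m\to m+1$ with $m\ge 2$, set $a:=\gamma$ and $b:=\gamma-1$, so that $a=b+1>1$ and $0<\theta\le b$. After cancelling the common positive numerator $(\gamma-\omega)(\gamma-1)^{N-1}$ and checking that both denominators $D(m):=\bigl(a^{N-m}-\theta b^{N-m-1}\bigr)\bigl(a^m-b^m\bigr)$ are positive (which uses $\theta\le b$), the claim $\alpha_{N,m+1}^\omega\ge\alpha_{N,m}^\omega$ becomes $D(m+1)\ge D(m)$. Expanding $D(m)$ reveals the key simplification that its $m$-independent part equals $a^N+\theta b^{N-1}$, so $D(m)=a^N+\theta b^{N-1}-E(m)$ with $E(m):=a^{N-m}b^m+\theta a^m b^{N-m-1}$, and the claim reduces to $E(m)\ge E(m+1)$. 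Using $a-b=1$, the difference telescopes to $E(m)-E(m+1)=a^{N-m-1}b^m-\theta a^m b^{N-m-2}$, which is nonnegative precisely when $a^{N-2m-1}b^{\,2m-N+2}\ge\theta$. Writing $s:=N-2m-1\ge 1$ (guaranteed by $m\le\lfloor N/2\rfloor-1$), the left-hand side equals $b(a/b)^s\ge b\,(a/b)=a>\theta$, which settles this case.

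The genuinely separate and, I expect, most delicate step is the transition $m=1\to m=2$, since $\alpha_{N,1}^\omega$ is controlled by $\gamma_2=\gamma+\epsilon$ with $\epsilon:=(\omega-1)c_1\ge 0$ rather than by $\gamma$. If $\gamma\le\omega$ then $\alpha_{N,2}^\omega=1$ and nothing is to prove, so assume $\theta>0$. Cross-multiplying the two fractions (both denominators are again positive) and cancelling $(\gamma-1)^{N-1}$, the inequality $\alpha_{N,2}^\omega\ge\alpha_{N,1}^\omega$ simplifies, after collecting the coefficients of $a^{N-2}$ and of $b^{N-3}$, to
\[
\bigl[\theta b+\epsilon(b+\omega)\bigr]a^{N-2}\ge a\,\theta(\theta+\epsilon)\,b^{N-3}.
\]
Since $N\ge 4$ one has $a^{N-2}\ge a\,b^{N-3}$, so it suffices to verify the scalar inequality $\theta b+\epsilon(b+\omega)\ge\theta(\theta+\epsilon)$; this follows from $\theta\le b$ (giving $\theta b\ge\theta^2$) together with $b+\omega\ge\theta$, i.e.\ $2\omega\ge 1$ (giving $\epsilon(b+\omega)\ge\epsilon\theta$). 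The main obstacle here is the bookkeeping in the cross-multiplication, namely correctly isolating the coefficient of $a^{N-2}$ as $\theta b+\epsilon(b+\omega)$ (which uses $a-\theta=\omega$); the remaining estimates are elementary. Combining the two cases yields $\alpha_{N,m+1}^\omega\ge\alpha_{N,m}^\omega$ for all $m\in\{1,\ldots,\lfloor N/2\rfloor-1\}$.
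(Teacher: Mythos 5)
Your proof is correct. It rests on the same two pillars as the paper's own argument: the explicit formula from Theorem \ref{alpha_formula_thm} and the observation that $\gamma_i=c_0+c_1=:\gamma$ for all $i\ge 3$; also your dismissal of the degenerate case (where $\gamma_{m+1}\le\omega$ forces $\gamma_{m+2}\le\omega$, so both values equal $1$) is exactly the paper's opening step ``w.l.o.g.\ $\gamma_{m+1}-\omega>0$ since $\gamma_{m+1}\ge\gamma_i$ for $i\ge 3$''. The execution, however, is organized differently. The paper keeps the general $\gamma_i$'s, cross-multiplies \eqref{alpha_formula} into inequality \eqref{appendix_monotonicity_ineq1}, and then uses only the identity $\gamma_{m+2}=\gamma_{N-m}$ to regroup the difference into a sum of two manifestly nonnegative terms: one carrying the factor $\omega(\gamma_{m+1}-\gamma_{N-m})\ge 0$, which is precisely where the $m=1$ asymmetry $\gamma_2\ge\gamma$ is absorbed, and one carrying $(\gamma_{m+1}-\omega)>0$ times a positive bracket; this treats $m=1$ and $m\ge 2$ in a single stroke. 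You instead specialize the formula first, which splits the argument into two cases: for $m\ge 2$ the numerators of the two fractions coincide, so monotonicity reduces to the denominator comparison $D(m+1)\ge D(m)$, settled cleanly by your telescoping identity $E(m)-E(m+1)=a^{N-m-1}b^m-\theta a^m b^{N-m-2}$ together with $a-b=1$; the case $m=1$, where $\gamma_2=\gamma+(\omega-1)c_1$ breaks the symmetry, then needs its own cross-multiplication ending in the scalar inequality $\theta b+\epsilon(b+\omega)\ge\theta(\theta+\epsilon)$. Your route is somewhat longer but more transparent: it exposes exactly where the restriction $m\le\lfloor N/2\rfloor-1$ enters (via $s=N-2m-1\ge 1$), it reduces everything to elementary scalar estimates that a reader can check line by line, and the $m\ge 2$ part isolates a clean standalone fact (monotonicity of the denominator in $m$). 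The paper's regrouping buys brevity and uniformity in $m$, at the price of leaving a substantial block of unverified product algebra to the reader.
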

\begin{proof}
	Since $\gamma_{m+1} \geq \gamma_{i}$ for all $i \in \mathbb{N}_{\geq 3}$ we assume w.l.o.g. $\gamma_{m+1} - \omega > 0$. Thus, we use Formula \ref{alpha_formula} to show the assertion. Then, $\alpha_{N,m+1}^\omega - \alpha_{N,m}^\omega \geq 0$ is equivalent to
	{\small\begin{eqnarray}
		& & (\gamma_{m+1}-\omega)(\gamma_{m+2}-1) \left[ \prod_{i=m+2}^N \gamma_i - (\gamma_{m+2}-\omega) \prod_{i=m+3}^N (\gamma_i - 1) \right] \left[ \prod_{i=N-m}^N \gamma_i - \prod_{i=N-m}^N (\gamma_i-1) \right] \label{appendix_monotonicity_ineq1} \\
		& - & (\gamma_{m+2} - \omega)(\gamma_{N-m}-1) \left[ \prod_{i=m+1}^N \gamma_i - (\gamma_{m+1}-\omega) \prod_{i=m+2}^N (\gamma_i-1) \right] \left[ \prod_{i=N-m+1}^N \gamma_i - \prod_{i=N-m+1}^N (\gamma_i-1) \right] \geq 0. \nonumber
	\end{eqnarray}}
	Note that $\gamma_{3} = \gamma_{i}$ holds for all $i \in \mathbb{N}_{\geq 4}$ which implies $\gamma_{m+2} = \gamma_{N-m}$. Thus, the inequality in consideration simplifies to
	\begin{eqnarray*}
		& & \omega (\gamma_{m+1} - \gamma_{N-m}) \prod_{i=m+2}^N \gamma_i \left[ \prod_{i=N-m+1}^N \gamma_i - \prod_{i=N-m+1}^N (\gamma_i-1) \right] \\
		& + & \prod_{i=N-m+1}^N (\gamma_i-1) \prod_{i=N-m+1}^N \gamma_i (\gamma_{m+1}-\omega) \left[ \prod_{i=m+2}^{N-m} \gamma_i - (\gamma_{m+2}-\omega) \prod_{i=m+3}^{N-m} (\gamma_i - 1) \right] \geq 0
	\end{eqnarray*}
	and shows the assertion.
\end{proof}

Now, we aim at deriving monotonicity properties assuming exponential controllability. To this end, we consider inequality \eqref{appendix_monotonicity_ineq1}, i.e., $\alpha_{N,m+1}^\omega - \alpha_{N,m}^\omega$, which is in turn equivalent to

{\footnotesize\begin{equation}\label{appendix_exp_monotonicity_inequality1}
	 - a \prod_{i=m+2}^N \gamma_i + (a + (\gamma_{m+1} - \omega)(\gamma_{m+2} - 1)) \prod_{i=m+2}^{N-m} \gamma_i \prod_{i=N-m+1}^N (\gamma_i - 1) - (\gamma_{m+1} - \omega)(\gamma_{m+2} - \omega) \prod_{i=m+2}^N (\gamma_i - 1) \geq 0
\end{equation}}
with $a := - [ \omega \gamma_{N-m} (\gamma_{m+1} - \gamma_{m+2}) + \gamma_{m+1} (\gamma_{m+2} - \gamma_{N-m}) + \omega (\gamma_{N-m} - \gamma_{m+1}) ]$. The following corollary shows the desired monotonicity properties for the exponentially controllable case for final weights which are chosen sufficiently large, i.e., $\omega \geq (1-\sigma)^{-1}$.
\begin{corollary}\label{qualitativ_properties_monotonicity_eta}
	Let the $\mathcal{KL}_0$-function be of type \eqref{3:eq:exponential controllability} and $\eta := 1 + \sigma \omega - \omega \leq 0$. Then it holds $\alpha^\omega_{N,m+1} - \alpha^\omega_{N,m} \geq 0$ for $m \in \mathbb{N}_{\geq 1}$ satisfying $2m + 2 \leq N$, $N \in \mathbb{N}_{\geq 4}$.
\end{corollary}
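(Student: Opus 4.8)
The plan is to establish inequality \eqref{appendix_exp_monotonicity_inequality1}, which was just shown to be equivalent to $\alpha_{N,m+1}^\omega - \alpha_{N,m}^\omega \geq 0$, under the standing hypotheses. First I would dispose of the degenerate case: if $\gamma_{m+1}-\omega \leq 0$, then $\alpha_{N,m}^\omega = 1$, and Lemma \ref{suff_cond_exp} (applied with $\widetilde{m}=m+1$) forces $\gamma_{m+2}-\omega \leq 0$, hence $\alpha_{N,m+1}^\omega = 1$ as well and monotonicity holds with equality. So throughout the main argument I may assume $\gamma_{m+1}-\omega > 0$. The crucial structural input from $\eta \le 0$ is the closed form $\gamma_i = C(1-\eta\sigma^{i-1})/(1-\sigma)$, valid for \eqref{3:eq:exponential controllability}: since $\eta \le 0$ and $C \ge 1$, this shows $\gamma_i > 1$ for every $i$ and, because $\sigma^{i-1}$ is decreasing, that the sequence $(\gamma_i)$ is monotonically non-increasing. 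I would use both facts repeatedly.

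Next I would collect the three product blocks in \eqref{appendix_exp_monotonicity_inequality1}. Writing $P := \prod_{i=m+2}^{N-m}\gamma_i$, $R := \prod_{i=m+2}^{N-m}(\gamma_i-1)$, $Q := \prod_{i=N-m+1}^N\gamma_i$ and $Q' := \prod_{i=N-m+1}^N(\gamma_i-1)$ -- all strictly positive and well defined since $2m+2 \le N$ -- the left-hand side of \eqref{appendix_exp_monotonicity_inequality1} factors as
\begin{equation*}
	aP(Q'-Q) + (\gamma_{m+1}-\omega)\,Q'\bigl[(\gamma_{m+2}-1)P - (\gamma_{m+2}-\omega)R\bigr].
\end{equation*}
The second summand is non-negative: $\gamma_{m+1}-\omega>0$ and $Q'>0$ by assumption, while the bracket is positive because $\gamma_{m+2}-1 > \gamma_{m+2}-\omega$ (as $\omega>1$) and $P \ge R > 0$, so $(\gamma_{m+2}-1)P$ dominates $(\gamma_{m+2}-\omega)R$ irrespective of the sign of $\gamma_{m+2}-\omega$. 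Since $\gamma_i>1$ gives $Q>Q'$, the remaining term $aP(Q'-Q)$ is non-negative precisely when $a \le 0$.

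Thus the main obstacle is to pin down the sign of $a$, and I expect this to be where the hypotheses $\eta \le 0$ and $2m+2 \le N$ are genuinely used. Substituting the closed form for $\gamma_{m+1}$ and $\gamma_{N-m}$ into the definition of $a$, I would write $a = -K\eta\,B$ with $K := C/(1-\sigma) \ge 1$ and
\begin{equation*}
	B = -\omega\gamma_{N-m}\sigma^m(1-\sigma) + \gamma_{m+1}(\sigma^{N-m-1}-\sigma^{m+1}) + \omega(\sigma^m-\sigma^{N-m-1}).
\end{equation*}
Because $-K\eta \ge 0$, it suffices to show $B \le 0$. The middle term is $\le 0$ since $N-m-1 \ge m+1$ forces $\sigma^{N-m-1} \le \sigma^{m+1}$. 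The first and third terms I would combine into $\omega\sigma^m\bigl[1-\gamma_{N-m}(1-\sigma)\bigr] - \omega\sigma^{N-m-1}$, and here the key estimate is $\gamma_{N-m}(1-\sigma) = C(1-\eta\sigma^{N-m-1}) \ge 1$, which uses exactly $C \ge 1$ and $\eta \le 0$; this makes the bracket non-positive and hence the whole expression strictly negative.

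Therefore $B<0$, so $a \le 0$ (with $a=0$ in the boundary case $\eta=0$, where $(\gamma_i)$ is constant), the first summand is non-negative, and \eqref{appendix_exp_monotonicity_inequality1} follows. The only slightly delicate bookkeeping is verifying that the index comparisons $m+2 \le N-m$ and $m \le N-m-1$, used to fix the signs above, are both guaranteed by $2m+2 \le N$, which they are.
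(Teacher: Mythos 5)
Your proof is correct and follows essentially the same route as the paper: the same decomposition of the left-hand side of \eqref{appendix_exp_monotonicity_inequality1} into the $a$-term plus $(\gamma_{m+1}-\omega)Q'\bigl[(\gamma_{m+2}-1)P-(\gamma_{m+2}-\omega)R\bigr]$, and the same closed-form computation showing $-a\geq 0$ from $\eta\leq 0$, $C\geq 1$ and $N\geq 2m+2$ (your term-by-term estimate for $B\leq 0$ is, after a sign flip, exactly the paper's grouping $(C-1)\omega\sigma^m+\omega\sigma^{N-m-1}(1-C\eta\sigma^m)+\gamma_{m+1}(\sigma^{m+1}-\sigma^{N-m-1})\geq 0$). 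You additionally spell out the degenerate case $\gamma_{m+1}-\omega\leq 0$ and the positivity of the remaining bracket, which the paper leaves implicit.
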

\begin{proof}
	Using the equality $\gamma_{i+1} = C (\sum_{n=0}^{i-1} \sigma^n + \omega \sigma^{i} = C (1-\eta \sigma^i) (1-\sigma)^{-1}$ we obtain
	\begin{equation*}
		-a = \frac {-C \eta}{1-\sigma} \Big[ (C-1) \omega \sigma^m + \sigma^{N-m-1} \omega (1 - C \eta \sigma^{m}) + \gamma_{m+1} (\sigma^{m+1} - \sigma^{N-m-1}) \Big] \geq 0.
	\end{equation*}
	As a consequnece, the term $-a \prod_{i=m+2}^{N-m} \gamma_i (\prod_{i=N-m+1}^N \gamma_i - \prod_{i=N-m+1}^N (\gamma_i - 1))$ from \eqref{appendix_exp_monotonicity_inequality1} is positive. Since the remaining terms in \eqref{appendix_exp_monotonicity_inequality1} are also positive the assertion follows.
\end{proof}

The following lemma deals with the special case $N = 2m + 2$ and is proven in Subsection \ref{AppendixProofMonotonicity}.
\begin{lemma}\label{appendix_exp_monotonicity_induciton_start_lemma}
	Let the $\mathcal{KL}_0$-function be of type \eqref{3:eq:exponential controllability} and $\eta := 1 + \sigma \omega - \omega > 0$. Then it holds $\alpha^\omega_{N,m+1} - \alpha^\omega_{N,m} \geq 0$ for $m \in \mathbb{N}_{\geq 1}$ satisfying $2m + 2 = N$, $N \in \mathbb{N}_{\geq 4}$.
\end{lemma}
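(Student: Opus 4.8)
The plan is to start from inequality \eqref{appendix_exp_monotonicity_inequality1}, which the text has already shown to be equivalent to $\alpha_{N,m+1}^\omega - \alpha_{N,m}^\omega \geq 0$, and to exploit the coincidence $N-m=m+2$ that $N=2m+2$ forces. First I would dispose of the trivial case: if $\gamma_{m+1}-\omega\le 0$ then $\alpha_{N,m}^\omega=1$ and, by Lemma \ref{suff_cond_exp}, $\gamma_{m+2}-\omega\le 0$ as well, so $\alpha_{N,m+1}^\omega=1$ and the claim holds with equality. Hence I may assume $\gamma_{m+1}-\omega>0$; since $\gamma_i$ is strictly increasing in $i$ this also yields $\gamma_{m+2}>\gamma_{m+1}>\omega\ge 1$ and $\gamma_i>1$ throughout, so every product appearing below is positive and working with \eqref{alpha_formula} is legitimate.

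Next I would substitute $N=2m+2$. Then $\gamma_{N-m}=\gamma_{m+2}$, which collapses the coefficient $a$ in \eqref{appendix_exp_monotonicity_inequality1} to $a=\omega(\gamma_{m+2}-\gamma_{m+1})(\gamma_{m+2}-1)\ge 0$ and shrinks the product $\prod_{i=m+2}^{N-m}\gamma_i$ to the single factor $\gamma_{m+2}$. Writing $P:=\prod_{i=m+3}^{2m+2}\gamma_i$ and $Q:=\prod_{i=m+3}^{2m+2}(\gamma_i-1)$ and collecting the two terms carrying $Q$, the combination $(\gamma_{m+2}-1)\bigl[\gamma_{m+2}-(\gamma_{m+2}-\omega)\bigr]=\omega(\gamma_{m+2}-1)$ appears, so that after factoring out the positive constant $\omega(\gamma_{m+2}-1)$ the whole inequality reduces to the core estimate
\begin{equation*}
	\gamma_{m+2}(\gamma_{m+2}-\gamma_{m+1})(Q-P) + (\gamma_{m+1}-\omega)\,Q \geq 0 .
\end{equation*}
Here the first summand is negative because $P>Q$, while the second is positive, so a genuine quantitative comparison is needed.

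The hard part is this last inequality, and here I would bring in the explicit exponential data. With $\gamma_i=C(1-\eta\sigma^{i-1})/(1-\sigma)$ one has the clean identities $\gamma_{m+2}-\gamma_{m+1}=C\eta\sigma^{m}$ and $\gamma_{m+1}-\omega=\bigl[(C-1)+\eta(1-C\sigma^{m})\bigr]/(1-\sigma)$, so dividing the core estimate by $Q>0$ makes it equivalent to
\begin{equation*}
	\gamma_{m+1}-\omega \;\geq\; \gamma_{m+2}\,C\eta\sigma^{m}\left(\frac{P}{Q}-1\right),\qquad \frac{P}{Q}=\prod_{i=m+3}^{2m+2}\frac{\gamma_i}{\gamma_i-1}.
\end{equation*}
To control the product I would rewrite $P-Q$ through the telescoping identity behind \eqref{appendix_technical_corollary:1_eq1} as $\sum_{k}\bigl(\prod_{i<k}\gamma_i\bigr)\bigl(\prod_{i>k}(\gamma_i-1)\bigr)$ and estimate each summand using the monotonicity of $\gamma_i$ together with the geometric decay $\gamma_{i+1}-\gamma_i=C\eta\sigma^{i-1}$; the point to extract is that the factor $\sigma^{m}$ multiplying $(P/Q-1)$ must suppress the growth of the product in $m$. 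I expect this estimation — forcing the $\sigma^{m}$ decay to dominate $\prod\gamma_i/(\gamma_i-1)$ uniformly in $m$ — to be the crux of the lemma. As a sanity anchor I would first verify the instance $m=1$, where $P=\gamma_4$, $Q=\gamma_4-1$ and the estimate degenerates to the single polynomial inequality $(\gamma_2-\omega)(\gamma_4-1)\ge\gamma_3(\gamma_3-\gamma_2)$, and I would note that for $C=1$ Proposition \ref{exp_stab_C1} already gives $\alpha_{N,m+1}^\omega=\alpha_{N,m}^\omega$; thus the surplus required is driven entirely by the $(C-1)$-contribution, a structural feature I would try to isolate in order to keep the final bookkeeping manageable.
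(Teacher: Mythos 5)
Your reduction is correct and in fact lands exactly on the paper's own intermediate inequality: with $N=2m+2$ one has $\gamma_{N-m}=\gamma_{m+2}$, hence $a=\omega(\gamma_{m+2}-\gamma_{m+1})(\gamma_{m+2}-1)=C\sigma^m\eta\omega(\gamma_{m+2}-1)$, and after cancelling the positive factor $\omega(\gamma_{m+2}-1)$ inequality \eqref{appendix_exp_monotonicity_inequality1} becomes
\begin{equation*}
	\gamma_{m+2}(\gamma_{m+2}-\gamma_{m+1})(Q-P)+(\gamma_{m+1}-\omega)\,Q \;\geq\; 0,
	\qquad P:=\prod_{i=m+3}^{2m+2}\gamma_i,\quad Q:=\prod_{i=m+3}^{2m+2}(\gamma_i-1),
\end{equation*}
which is precisely the first displayed inequality of the paper's proof (there rewritten, via $\gamma_i=C(1-\eta\sigma^{i-1})/(1-\sigma)$, as a polynomial inequality $p(C)\geq q(C)=C^{m+2}$). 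Your dismissal of the case $\gamma_{m+1}-\omega\leq 0$ via Lemma \ref{suff_cond_exp} and your $m=1$ sanity check are also fine.

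The genuine gap is that the core estimate itself is never proved: you only announce a strategy (telescoping expansion of $P-Q$ via \eqref{appendix_technical_corollary:1_eq1}, termwise bounds using $\gamma_{i+1}-\gamma_i=C\eta\sigma^{i-1}$, and the hope that the prefactor $\sigma^m$ suppresses the growth of $P/Q$), and this strategy cannot succeed in the form described. The reason is one you yourself record at the end: by Proposition \ref{exp_stab_C1}, equality holds at $C=1$ for every $m$, $\sigma$, $\omega$. Consequently the inequality you must prove is \emph{tight} at $C=1$, and any termwise or magnitude-type bound on $P/Q-1$ that loses something at $C=1$ --- which every uniform domination argument of the kind you sketch does --- necessarily fails for $C$ in a neighbourhood of $1$. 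What is really being compared are two polynomials in $C$ of degree $m+2$ that coincide at $C=1$, so one needs information about roots and derivatives rather than sizes. This is exactly the mechanism the paper employs: it factors $p(C)$ into a quadratic (one negative root, one positive root shown to lie in $(0,1)$) times $m$ linear factors with roots in $(0,1)$, shows that the unique root of $p^{(m+1)}$ is strictly negative, anchors $p(1)=q(1)$ by Proposition \ref{exp_stab_C1}, and then applies the comparison Lemma \ref{lemma_polynomial} to conclude $p(C)\geq C^{m+2}$ for $C\geq 1$. Your closing remark about isolating the $(C-1)$-contribution points in the right direction, but without an argument of this polynomial-comparison type (or an equivalent exact treatment of the $C=1$ degeneracy), the crux of the lemma remains unproven.
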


Assume that \eqref{appendix_exp_monotonicity_inequality1} holds for $N$. We aim at establishing the considered inequality for $N+1$ using the assertion for $N$ as an induction assumption. Moreover, $a$ is defined as above and $\tilde{a}$ stands for the same expression with $N+1$ instead of $N$. Then, it suffices to show
\begin{eqnarray*}
	& & (\gamma_{N+1} - 1)(\gamma_{m+1} - \omega) (\gamma_{m+2} - \omega) \prod_{m+2}^N (\gamma_i - 1) \\
	& \leq & (\gamma_{N+1} - 1) \left( -a \prod_{m+2}^N \gamma_i + (a + (\gamma_{m+1} - \omega)(\gamma_{m+2} - 1)) \prod_{m+2}^{N-m} \gamma_i \prod_{N-m+1}^N (\gamma_i - 1) \right) \\
	& \leq & -\tilde{a} \prod_{m+2}^{N+1} \gamma_i + (\tilde{a} + (\gamma_{m+1} - \omega)(\gamma_{m+2} - 1)) \prod_{m+2}^{N-m+1} \gamma_i \prod_{N-m+2}^{N+1} (\gamma_i - 1)
\end{eqnarray*}
where we have omitted the control variable. This in turn is equivalent to
{\small \begin{eqnarray*}
	& & [(\tilde{a} + (\gamma_{m+1}-\omega)(\gamma_{m+2}-1)) \gamma_{N-m+1} - (a+(\gamma_{m+1}-\omega)(\gamma_{m+2}-1))(\gamma_{N-m+1}-1)] \prod_{i=N-m+2}^{N+1} (\gamma_i - 1) \\
	& \geq & (\tilde{a} \gamma_{N+1} - a (\gamma_{N+1} - 1)) \prod_{i=N-m+1}^N \gamma_i.
\end{eqnarray*}}
Taking into account the following equations
\begin{eqnarray*}
	\tilde{a} + (\gamma_{m+1}-\omega)(\gamma_{m+2}-1) & = & (\gamma_{N-m+1}-1)(\omega \gamma_{m+2} - \omega \gamma_{m+1} + \gamma_{m+1} - \omega),\\
	-a - (\gamma_{m+1} - \omega)(\gamma_{m+2} - 1) & = & - (\gamma_{N-m}-1)(\omega \gamma_{m+2} - \omega \gamma_{m+1} + \gamma_{m+1} - \omega),\\
	\tilde{a} - a & = & (\gamma_{N-m+1} - \gamma_{N-m})(\omega \gamma_{m+2} - \omega \gamma_{m+1} + \gamma_{m+1} - \omega),\\
	a & = & \gamma_{N-m}(\omega \gamma_{m+2} - \omega \gamma_{m+1} + \gamma_{m+1} - \omega) - \gamma_{m+1}(\gamma_{m+2}-\omega).
\end{eqnarray*}
the inequality in consideration is equivalent to
{\footnotesize\begin{equation}\label{appendix_exp_monotonicity_inequality2}
	(\gamma_{N-m+1} - (\gamma_{N-m} - 1)) \prod_{N-m+1}^{N+1} (\gamma_i - 1) \geq \left( \gamma_{N-m+1} - \gamma_{N-m} + \frac {\gamma_{N-m}}{\gamma_{N+1}} - \frac {\gamma_{m+1}(\gamma_{m+2}-\omega)}{\gamma_{N+1}(\omega \gamma_{m+2} - \omega \gamma_{m+1} + \gamma_{m+1} - \omega)} \right) \prod_{N-m+1}^{N+1} \gamma_i.
\end{equation}}
\begin{lemma}\label{qualitativ_properties_mon_omega1}
	Let the $\mathcal{KL}_0$-function $\beta$ be of type \eqref{3:eq:exponential controllability} and $\omega = 1$. Then $\alpha^\omega_{N,m+1} - \alpha^\omega_{N,m} \geq 0$ holds for $N \in \mathbb{N}_{\geq 4}$, $m \in \{1,\ldots,\lfloor N/2 \rfloor - 1\}$.
\end{lemma}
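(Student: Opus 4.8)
The plan is to fix the control horizon $m\ge 1$ and argue by induction on the optimization horizon $N$, exactly as set up in the displayed computation preceding the lemma. Since $\omega=1$ we have $\eta=1+\sigma\omega-\omega=\sigma>0$, so Corollary \ref{qualitativ_properties_monotonicity_eta} (which requires $\eta\le 0$) does not apply and the inductive argument is genuinely needed. For a fixed $m$ the admissible horizons are precisely $N\ge 2m+2$, and the base case $N=2m+2$ is supplied verbatim by Lemma \ref{appendix_exp_monotonicity_induciton_start_lemma}.

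For the induction step I would invoke the reduction already carried out in the text: assuming \eqref{appendix_exp_monotonicity_inequality1} for $N$, the inequality for $N+1$ follows once \eqref{appendix_exp_monotonicity_inequality2} is established. The first simplification is to specialize \eqref{appendix_exp_monotonicity_inequality2} to $\omega=1$. Here the factor $\omega\gamma_{m+2}-\omega\gamma_{m+1}+\gamma_{m+1}-\omega$ collapses to $\gamma_{m+2}-1$, which cancels the factor $\gamma_{m+2}-\omega=\gamma_{m+2}-1$ appearing in the numerator of the subtracted fraction, so its last term reduces to $\gamma_{m+1}/\gamma_{N+1}$. Writing $c_{N-m}=\gamma_{N-m+1}-\gamma_{N-m}$, the step inequality becomes
\begin{equation*}
	(c_{N-m}+1)\prod_{i=N-m+1}^{N+1}(\gamma_i-1)\ \ge\ \Big(c_{N-m}+\frac{\gamma_{N-m}-\gamma_{m+1}}{\gamma_{N+1}}\Big)\prod_{i=N-m+1}^{N+1}\gamma_i.
\end{equation*}
Since $N\ge 2m+2$ forces $N-m\ge m+2>m+1$, we have $\gamma_{N-m}>\gamma_{m+1}$, so the right-hand coefficient is positive; moreover $\gamma_{N-m}<\gamma_{N+1}$ shows this coefficient is strictly smaller than the left-hand coefficient $c_{N-m}+1$.

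To finish, I would insert the exponential form $\gamma_i=C(1-\sigma^i)/(1-\sigma)$ from \eqref{3:eq:exponential controllability}, clear the single denominator $\gamma_{N+1}$, and reduce the statement to a polynomial inequality in $\sigma$ and $C$ that can be checked using $\sigma\in(0,1)$, $C\ge 1$ and the horizon constraint $m+1\le N-m$. The main obstacle is exactly this last comparison: the inequality pits the larger coefficient against the smaller product $\prod(\gamma_i-1)$ on the left and the smaller coefficient against the larger product $\prod\gamma_i$ on the right, so neither side dominates termwise and the gap must be extracted from the precise exponential decay of $c_i=C\sigma^i$. I expect that dividing by $\prod_{i=N-m+1}^{N+1}\gamma_i$ and comparing $\prod_{i=N-m+1}^{N+1}(1-\gamma_i^{-1})$ against the coefficient ratio, while keeping track of the monotonicity of $\gamma_i$ in $i$, is the cleanest route, with all the remaining estimates ultimately resting on $C\ge 1$ and $\sigma<1$.
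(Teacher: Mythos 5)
Your setup coincides with the paper's own proof: induction on $N$ with the base case $N=2m+2$ supplied by Lemma \ref{appendix_exp_monotonicity_induciton_start_lemma}, reduction of the induction step to inequality \eqref{appendix_exp_monotonicity_inequality2}, and the correct specialization to $\omega=1$ (the factor $\omega\gamma_{m+2}-\omega\gamma_{m+1}+\gamma_{m+1}-\omega$ collapses to $\gamma_{m+2}-1$ and cancels, leaving $\gamma_{m+1}/\gamma_{N+1}$). Up to that point the proposal is sound. The gap is in the final step, which is where the entire difficulty of the lemma lives. You reduce to
\begin{equation*}
	(c_{N-m}+1)\prod_{i=N-m+1}^{N+1}(\gamma_i-1)\ \ge\ \Bigl(c_{N-m}+\tfrac{\gamma_{N-m}-\gamma_{m+1}}{\gamma_{N+1}}\Bigr)\prod_{i=N-m+1}^{N+1}\gamma_i
\end{equation*}
and then propose to verify this as a polynomial inequality in $C$ and $\sigma$ by dividing through by $\prod\gamma_i$ and comparing the coefficient ratio with $\prod(1-\gamma_i^{-1})$, "resting on $C\ge 1$ and $\sigma<1$." This cannot work as described, because the inequality degenerates to an \emph{equality} at $C=1$: by Proposition \ref{exp_stab_C1} the value $\alpha^1_{N,m}$ is independent of $m$ when $C=1$, so both sides coincide there (in the paper's notation, $p(1)=q(1)$). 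Hence there is no slack anywhere for coarse estimates or monotonicity arguments; any proof must quantify how the left side pulls ahead of the right side as $C$ grows beyond $1$, which is exactly the obstacle you flag ("neither side dominates termwise") but do not overcome.

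The paper resolves this with machinery your proposal never invokes. After inserting $\gamma_i=C(1-\sigma^i)/(1-\sigma)$, both sides become monic polynomials of degree $m+2$ in $C$, and the paper verifies the hypotheses of the polynomial comparison Lemma \ref{lemma_polynomial}: $p$ has exactly one negative root and $m+1$ positive roots lying in $(0,1)$; $q(C)=C^{m+1}(C+c)$ with $c>0$; $p(1)=q(1)$ by Proposition \ref{exp_stab_C1}; and, crucially, the root of $p^{(m+1)}$ lies strictly to the left of that of $q^{(m+1)}$, which reduces to the explicit estimate
\begin{equation*}
	\sigma^{-(N-m)} - \sum_{i=N-m+1}^{N+1}\frac{1-\sigma}{1-\sigma^i} \ >\ \frac{\sigma^{m+1}-\sigma^{N-m}}{(1-\sigma^{N+1})\,\sigma^{N-m}}.
\end{equation*}
Lemma \ref{lemma_polynomial} (itself proved by induction over the degree, via derivatives) then yields $p(C)\ge q(C)$ for all $C\ge 1$. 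Without an argument of this type --- one anchored at the equality point $C=1$ and controlling the growth of $p-q$ through its derivatives --- your concluding "polynomial inequality that can be checked" remains unproved, so the proposal has a genuine gap at precisely the step that constitutes the proof.
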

\begin{proof}
	Due to Lemma \ref{appendix_exp_monotonicity_induciton_start_lemma}, it suffices to show inequality \eqref{appendix_exp_monotonicity_inequality2} to prove $\alpha^\omega_{N,m+1} \geq \alpha^\omega_{N,m}$. $\omega = 1$ implies $(\omega \gamma_{m+2} - \omega \gamma_{m+1} + \gamma_{m+1} - \omega) = (\gamma_{m+2} - 1)$ and $\eta = \sigma$. Thus, the right hand side of this inequality can be simplified significantly. Using the definition of $\gamma_i$, we obtain the equivalent inequality
	\begin{equation*}
		p(C):=(C + \sigma^{-(N-m)}) \prod_{i=N-m+1}^{N+1} \left( C - \frac {1-\sigma}{1-\sigma^i} \right) \geq C^{m+1} \left( C + \frac {\sigma^{m+1}-\sigma^{N-m}}{(1-\sigma^{N+1})\sigma^{N-m}} \right)=:q(C).
	\end{equation*}
	Proposition \ref{exp_stab_C1} provides $p(1) = q(1)$. Furthermore, note that $p$ has exactly one negative root and $m+1$ strictly positive roots which are located in the open interval $(0,1)$. Additionaly, $q$ can be represented as $C^{m+1}(C+c)$ with $c > 0$. Thus, the only condition needed to be verified for the application of Lemma \ref{lemma_polynomial} is the one with respect to the ${(m+1)}^{st}$-derivative. To this end, we have to show the inequality
	\begin{equation*}
		\sigma^{-(N-m)} - \sum_{i=N-m+1}^{N+1} \frac {1-\sigma}{1-\sigma^i} > \frac {\sigma^{m+1}-\sigma^{N-m}}{(1-\sigma^{N+1})\sigma^{N-m}}
	\end{equation*}
	which is equivalent to
	\begin{equation*}
		\underbrace{\frac {1-\sigma^{m+1}}{1-\sigma}}_{> (m+1) \sigma^m} (1+\sigma^{N-m}) > (1-\sigma^{N+1}) \sigma^{N-m} \underbrace{\sum_{i=N-m+1}^{N+1} \frac 1 {1-\sigma^i}}_{< (m+1) \frac 1 {1-\sigma^{N-m+1}}}.
	\end{equation*}
	Hence, the inequality
	\begin{equation*}
		(1-\sigma^{N-m+1}) \sigma^m (1+\sigma^{N-m}) \geq \sigma^m - \sigma^{2N - m + 1} \geq \sigma^{N-m} - \sigma^{N-m} \sigma^{N+1} = \sigma^{N-m} (1-\sigma^{N+1})
	\end{equation*}
	implies the assertion.
\end{proof}

\section{Quantitative characteristics of $\alpha_{N,m}^\omega$ depending on varying control horizon $m$}\label{alphasec3}

In the previous section we derived qualitative characteristics of the obtained stability bounds $\alpha_{N,m}^\omega$ with respect to the control horizon $m$, e.g. symmetry or monotonicity properties, which can be exploited according to Theorem \ref{thm:stabmain}. Nevertheless, the decisive question remains whether we are able to guarantee stability or not, i.e., whether the sign of the $\alpha_{N,m}^\omega$-values corresponding to a given $\mathcal{KL}_0$-function $\beta(r,t)$ is positive or not. Clearly, choosing the optimization horizon $N$ sufficiently large ensures stability, cf. Corollary \ref{convergenceNtoINFINITY}. On the other hand, enlarging the optimization horizon $N$ implies higher computational costs in order to find an optimal control with respect to \eqref{2:eq:finite cost functional}. In contrast to that, changing the control horizon $m$ or adding an additional weight on the final term does not significantly change the effort needed to solve the finite horizon optmization problem induced by \eqref{2:eq:finite cost functional}. Hence, we aim at analyzing the influence of these set screws more closely.

\subsection{Control horizon}
Assuming exponential controllability, i.e., a $\mathcal{KL}_0$-function of type \eqref{3:eq:exponential controllability}, Theorem \ref{alpha_formula_thm} enables us to determine the maximal feasible overshoot $C$ for a given decay rate $\sigma$ ensuring stablity, i.e., a positive $\alpha_{N,m}^\omega$-value. For simplicity of exposition, we restrict ourselves to the case without an additional weight on the final term, i.e., $\omega = 1$. In order to determine the impact of the control horizon $m$ for a given optimization horizon $N$, we carefully examine the stability region, i.e., the set of all parameter combinations $C \geq 1$, $\sigma \in (0,1)$ guaranteeing stability of the underlying discrete time systems. Due to the assertion of Corollary \ref{alpha_symm_cor} it suffices to deal with $m \in \{1,\ldots,\lfloor N/2 \rfloor\}$. Exemplarily, Figure \ref{plot_stability_region} shows the stability regions for $N=7$ and $N=11$ respectively.
\begin{figure}[!ht]
	\begin{center}
		\includegraphics[width=6.cm]{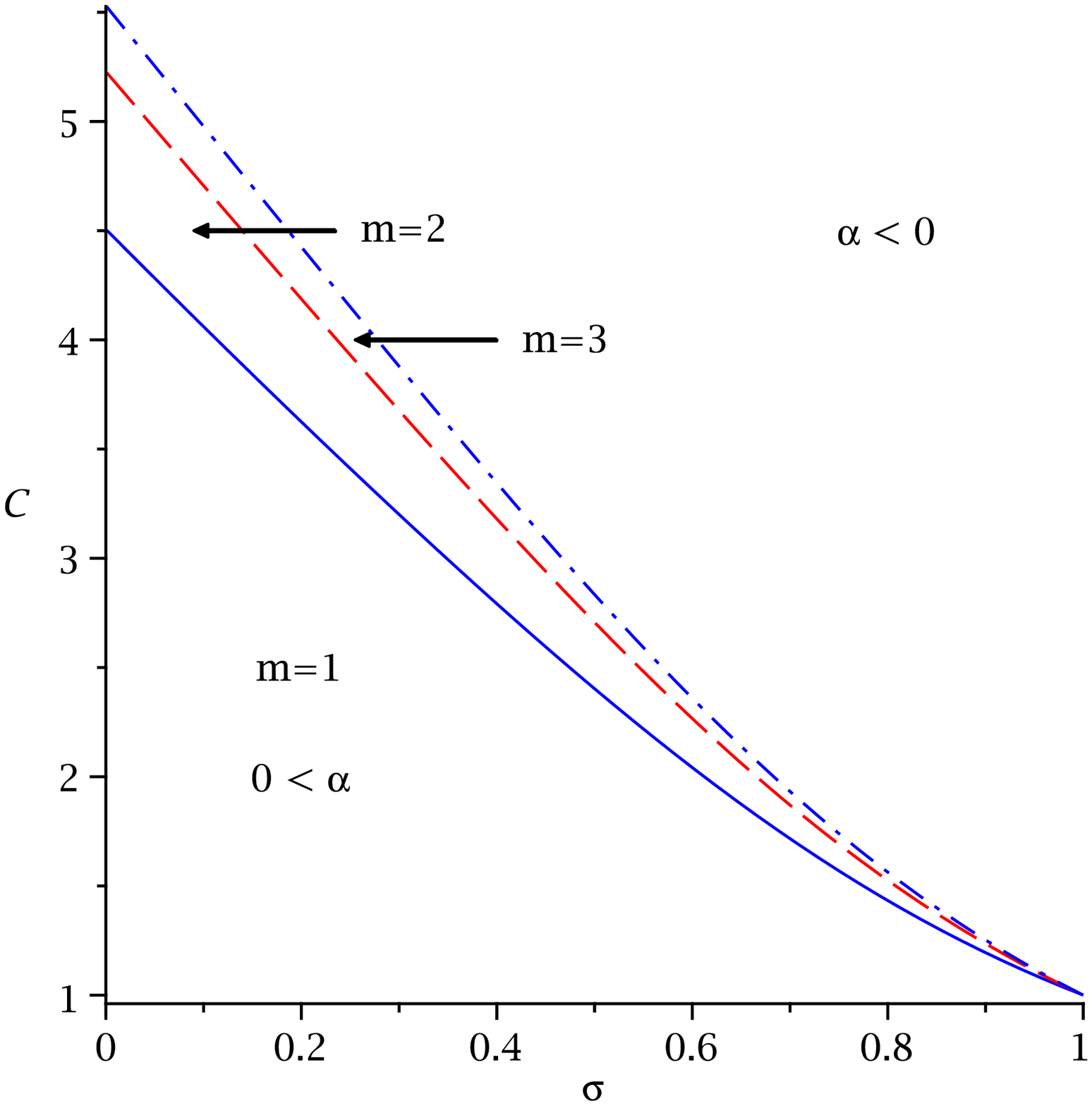}
		\hspace{5mm}
		\includegraphics[width=6.cm]{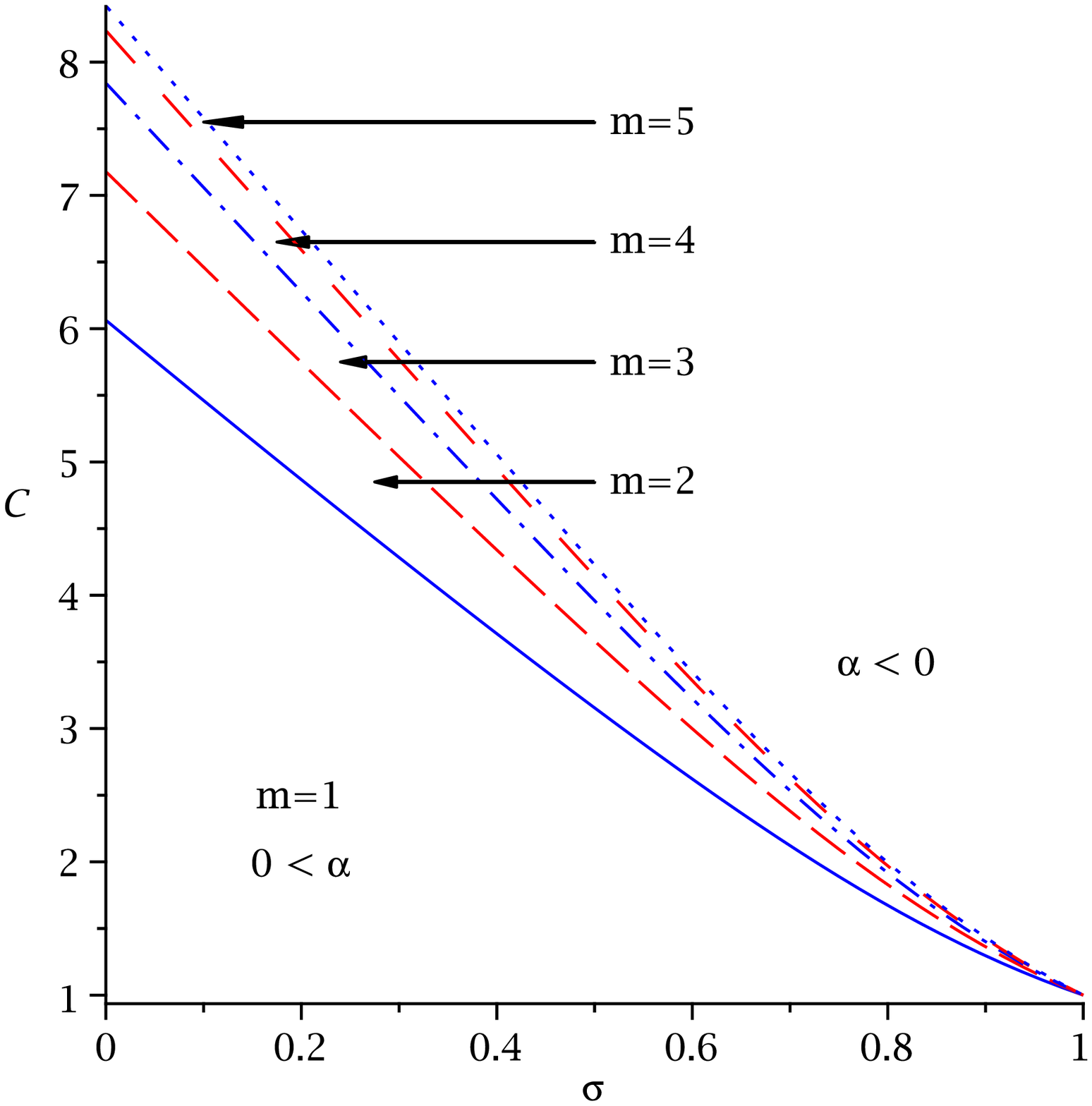}
	\end{center}
	\caption{Illustration of parameter combinations $(C,\sigma)$ ensuring stability in dependency of the control horizon $m$ for optimization horizons $N=7$ and $N=11$ respectively.}
	\label{plot_stability_region}
\end{figure}

Apparently, increasing the control horizon $m$ enlarges the stability region, i.e., allows for larger overshoots $C$ for given decay rates $\sigma$. To be more precise, incrementing the control horizon augments the area of interest. Here, the monotonicity property according to Lemma \ref{qualitativ_properties_mon_omega1} is reflected. Moreover, we are able to quantify this aspect. E.g., for $N=7$, the area containing feasible $(C,\sigma)$ pairs is scaled up by $21$ and $30$ percent. For longer optimization horizons increasing the control horizon enhances the attainable gain even further, e.g. $m=2$ and $m=5$ enlarge the stability region by $23$ and $48$ percent respectively.

As we have seen, the exponential controllable case provides various desirable properties, cf. Theorem \ref{thm:stabmain}, which do not hold for finite time controllability, i.e., $\mathcal{KL}_0$-functions of type \eqref{3:eq:finite time controllability} satisfying \eqref{3:eq:submultiplicativity}, in general. Still, we expended the effort to give a complete characterization referring to this setting, cf. Lemmata \ref{symmetry_omega_c1c2c3} and \ref{monotonicity_omega_c1c2}. The reason for putting so much emphasis on this is given in the following example.
\begin{example}
\label{QuantitativePropertiesComparison}
	Consider the $\mathcal{KL}_0$-function of type \eqref{3:eq:finite time controllability} defined by $c_0 = 5/2,\, c_1 = 2,\, c_2 = 3/2,\, c_3 = 32/25,\, c_4 = 1,\, c_5 = 1/2,\, c_6 = 1/8$, and $c_i = 0$ for all $i \in \mathbb{N}_{\geq 7}$. We construct an upper bound by choosing $C=5/2$ and $\sigma=4/5$, i.e., a $\mathcal{KL}$-function of type \eqref{3:eq:exponential controllability}, cf. Figure \ref{comparison} on the right. Although this seems to be a good approximation, the corresponding $\alpha_{N,\cdot}^\omega$-values are significantly worse, cf. Figure \ref{comparison} on the left. E.g., in contrast to the finite time controllable case, stability cannot be guaranteed for control horizons $m \in \{2,3,4,12,13,14\}$ using exponential controllability.
\end{example}

Thus, it is in general favorable to work with a $\mathcal{KL}_0$-function ensuring finite time controllability in contrast to using an upper bound provided by an estimated $\mathcal{KL}$-function of type \eqref{3:eq:exponential controllability}. Particularly, this is the case since positivity of the needed $\alpha_{N,\cdot}^\omega$-values can easily be checked by means of Theorem \ref{alpha_formula_thm}. Moreover, note that even for $\mathcal{KL}_0$-functions which do not satisfy the assumptions of Theorem \ref{thm:stabmain}, the assertion with respect to symmetry and monotonicity often holds, cf. Figure \ref{comparison}.

\begin{figure}[!ht]
	\begin{center}
		\includegraphics[width=4.8cm]{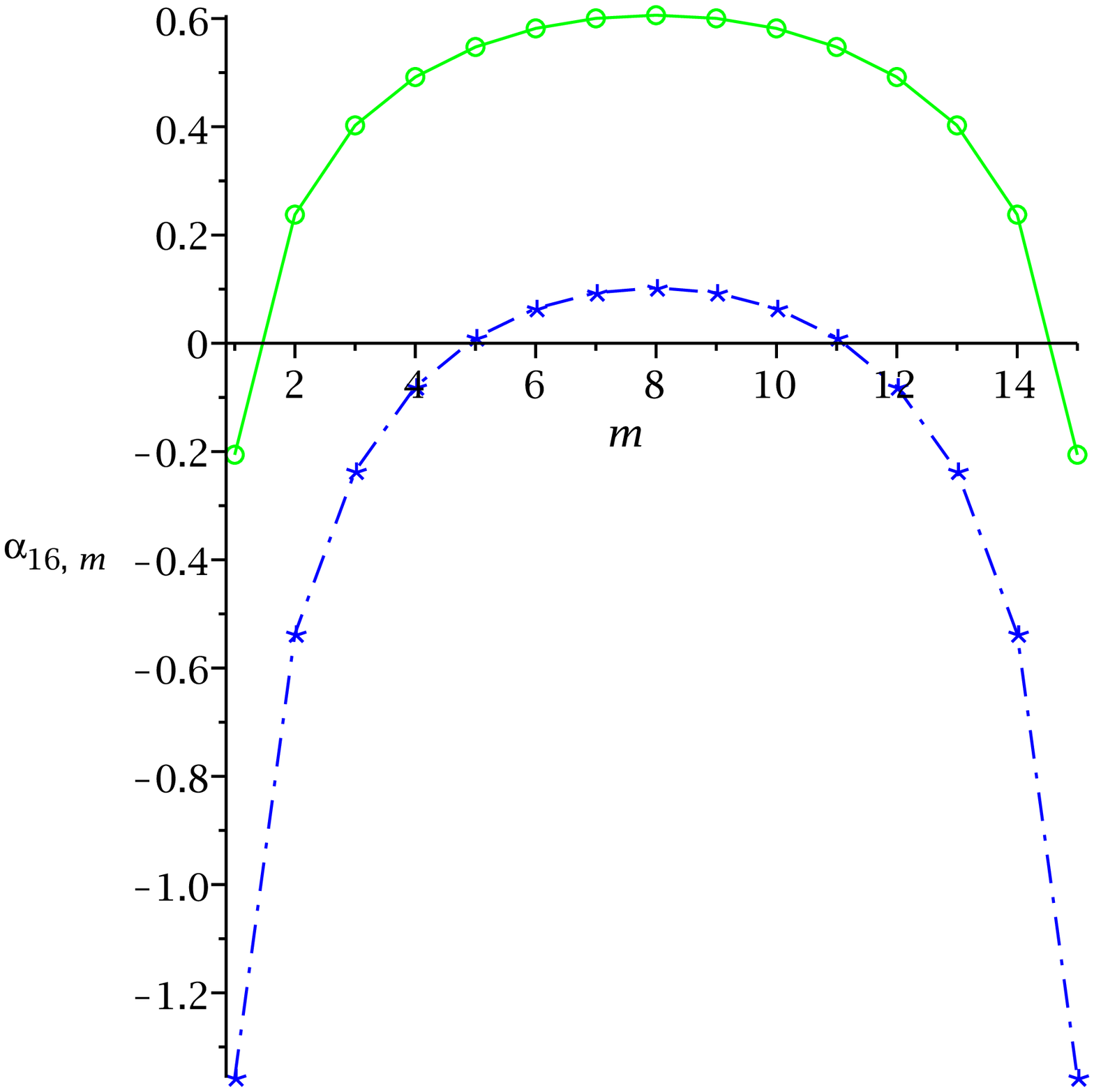}
		\hspace{12mm}
		\includegraphics[width=4.8cm]{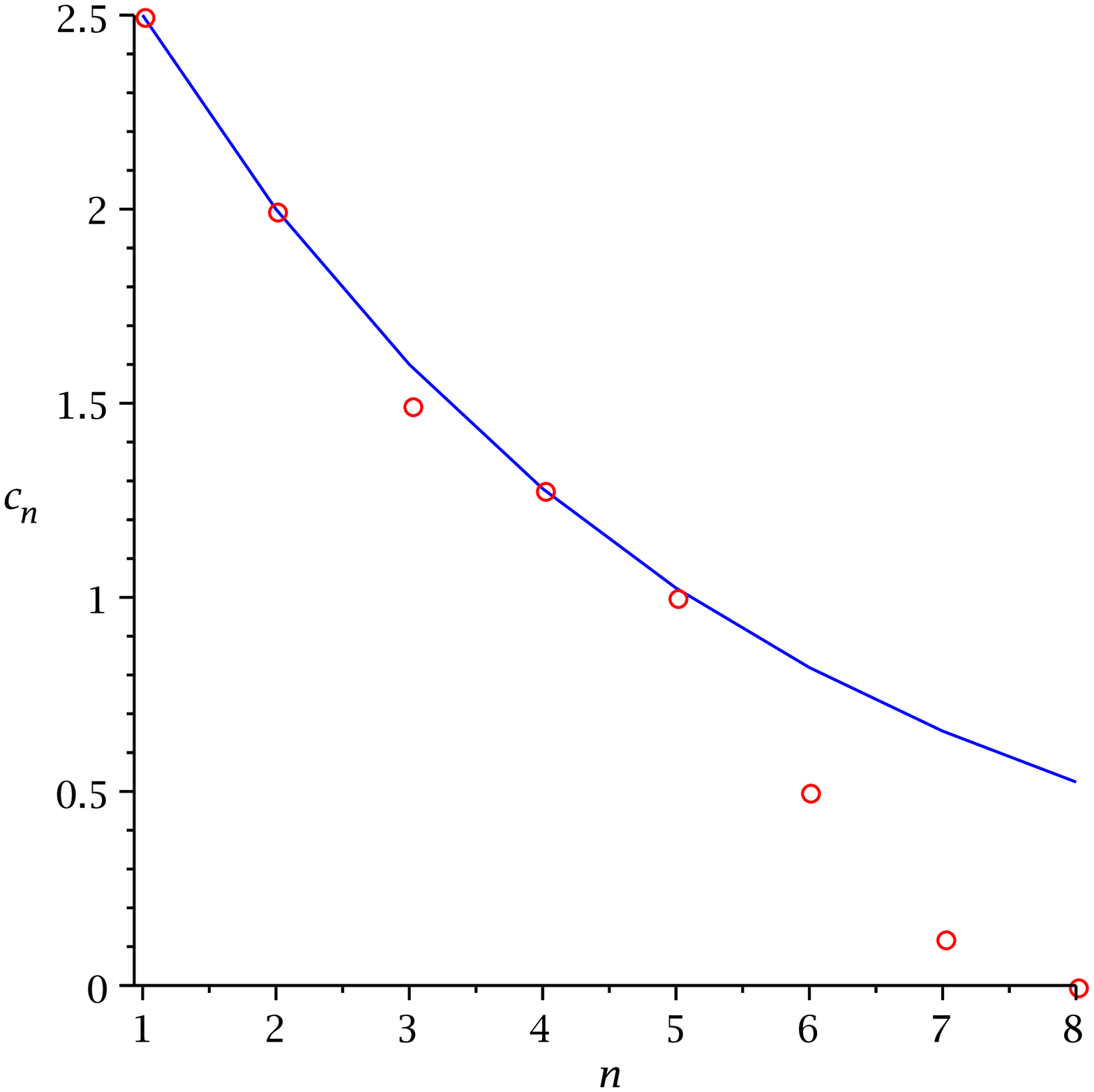}
	\end{center}
	\caption{Comparison of $\alpha_{16,\cdot}^1$ for the $\mathcal{KL}_0$-functions of type (\ref{3:eq:exponential controllability},$\circ$) and (\ref{3:eq:finite time controllability},$\star$) respectively. On the right we depict the corresponding $\mathcal{KL}_0$-functions.}
	\label{comparison}
\end{figure}

\subsection{Influence of an additional weight on the final term}

In order to evaluate the benefit provided by a final weight, we discern whether the coefficient $c_m$ is strictly smaller than one or not. Since $c_m < 1$ allows for guaranteeing the necessary and sufficient condition $\gamma_{m+1} - \omega \leq 0$, cf. Theorem \ref{alpha_formula_thm}, for sufficiently large $\omega$, adding an appropriate additional weight on the final term enables us to ensure stability. Moreover, note that the probability of being able to fulfill this condition increases with longer control horizons in general, cf. $\mathcal{KL}_0$-functions of type \eqref{3:eq:exponential controllability}.

However, without this condition being satisfied analyzing the effects of including a final weight is much more subtle. Thus, we start our investigation by looking at the following example which demonstrates positive effects of adding a final weight to the considered setting.
\begin{example} \label{example_impact_final_weight1}
	We assume finite time controllability characterized by the coefficients $c_0=1,\, c_1=3/2,\, c_2=39/20,\, c_4=7/5$, and $c_i=0$ for all $i \in \mathbb{N}_{\geq 5}$ which ensure \eqref{3:eq:submultiplicativity}. The resulting $\alpha_{7,\cdot}^\omega$-values for $\omega = 1$ and $\omega = 3/2$ are illustrated in Fig. \ref{impact_final_weight}.
\end{example}

At first, note that the generalized symmetry as well as the monotonicity property hold in Example \ref{example_impact_final_weight1} although the assumptions of Lemmata \ref{symmetry_omega_c1c2c3} and \ref{monotonicity_omega_c1c2} are not satisfied $(c_3,\, c_4 > 0)$. Furthermore, we observe the interplay of adding a final weight and the two mentioned properties. Together, these adjusting screws imply our stability condition for $m=4$ which is not fulfilled for $\omega = 1$.

The next example points out a possible pitfall as well as an adequate approach to overcome it.
\begin{example}
\label{example_impact_final_weight2}
	Again, we assume finite time controllability, i.e., a $\mathcal{KL}_0$-function of type \eqref{3:eq:finite time controllability} given by $c_0=1,\, c_1=3/2,\, c_2=2/3,\, c_3=1$, and $c_i=0$ for all $i \in \mathbb{N}_{\geq 4}$. Note that these coefficients satisfy condition \eqref{3:eq:submultiplicativity}. In Fig. \ref{impact_final_weight} we have depicted $\alpha_{5,m}^\omega$, $m=1,2,3,4$ with several different additional weights on the final term.
\end{example}
\begin{figure}[!ht]
	\begin{center}
		\includegraphics[width=4.8cm]{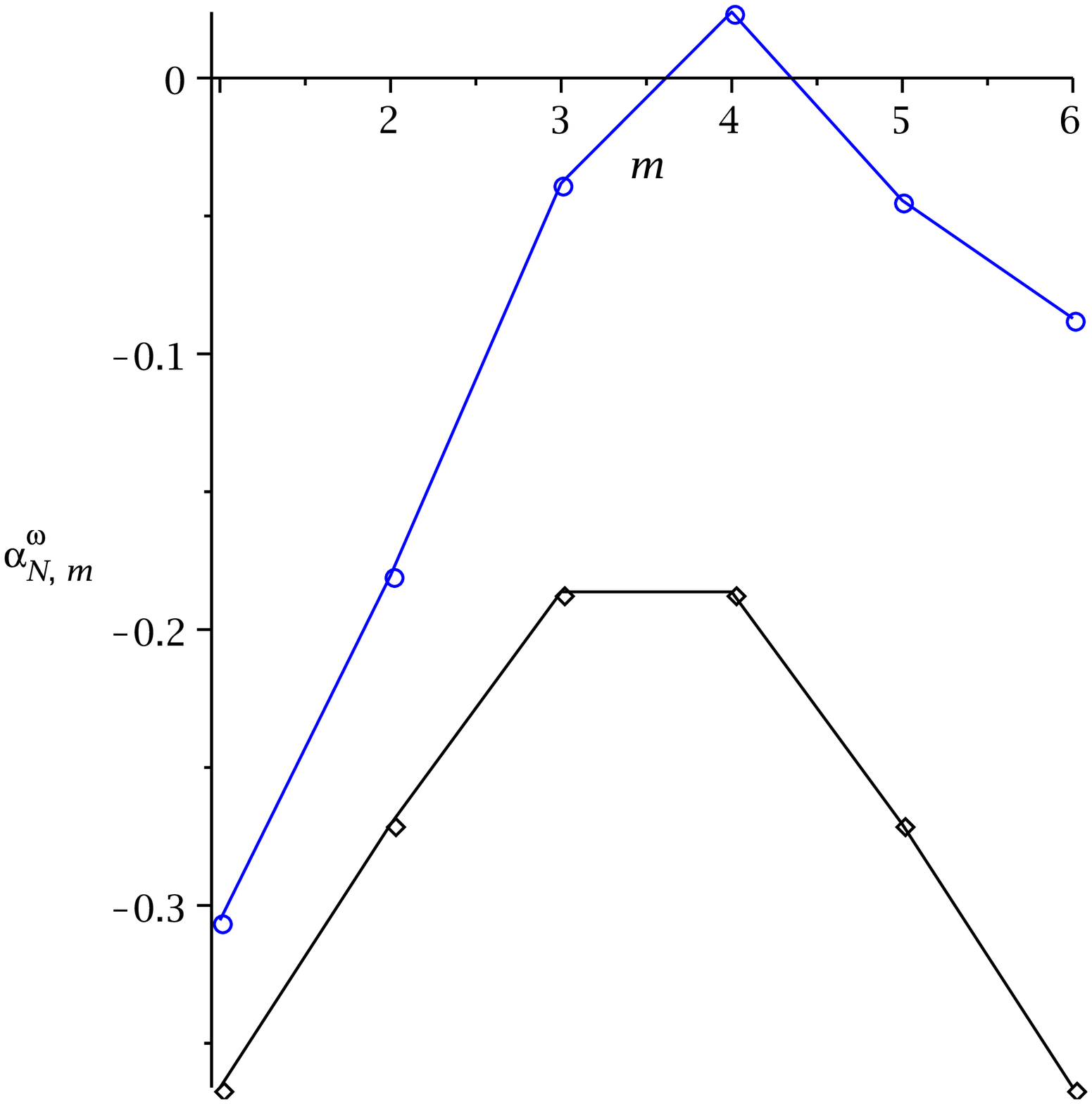}
		\hspace{12mm}
		\includegraphics[width=4.8cm]{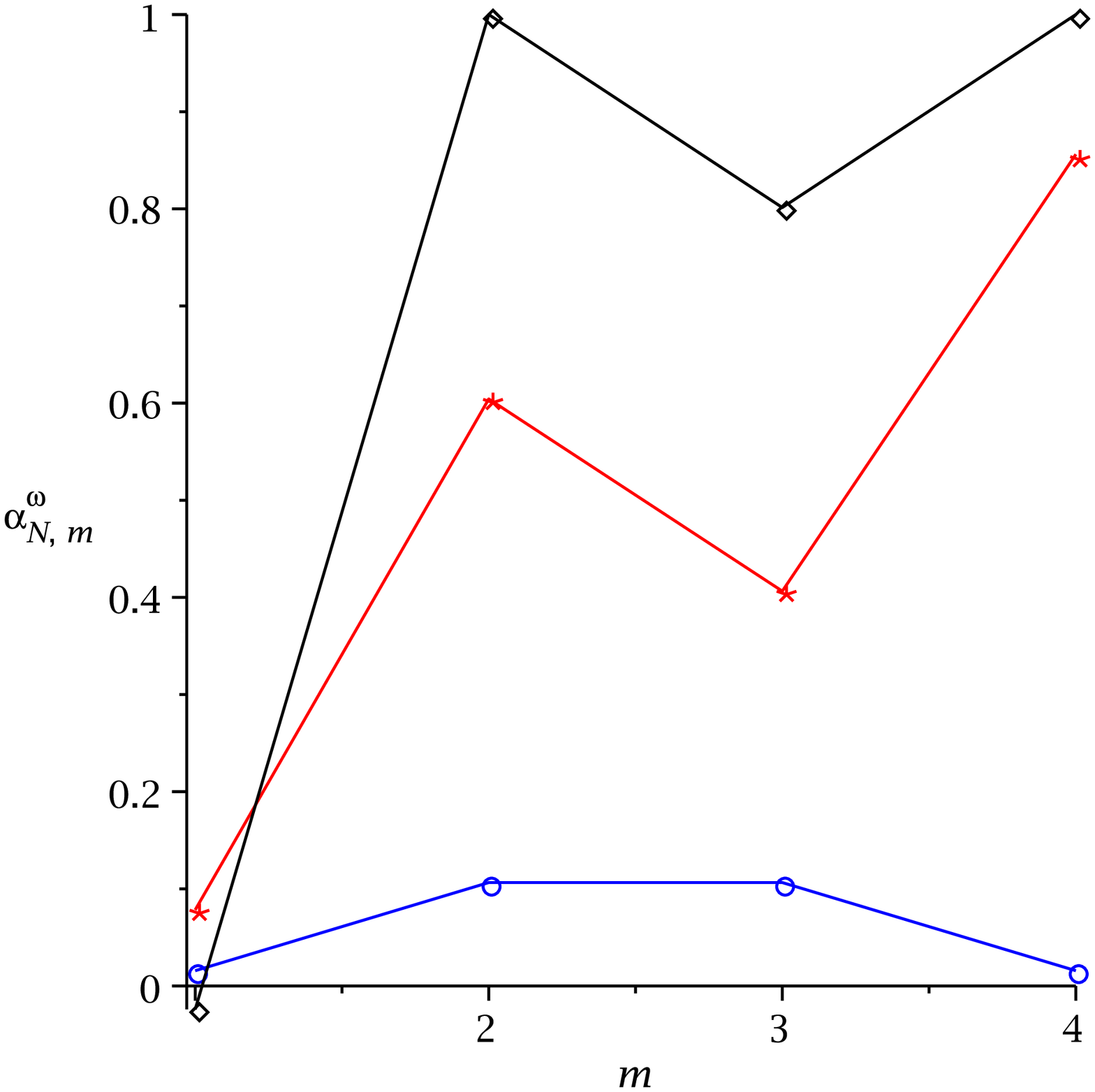}
	\end{center}
	\caption{$\alpha_{7,\cdot}^\omega$ for the $\mathcal{KL}_0$-function from Example \ref{example_impact_final_weight1} for $\omega = 1$ ($\diamond$) and $\omega=3/2$ ($\circ$). On the right $\alpha_{5,\cdot}^\omega$ corresponding to Example \ref{example_impact_final_weight2} for $\omega = 1,\, 7/2$, and $55/2$ ($\circ,*,\diamond$).}
	\label{impact_final_weight}
\end{figure}

Although increasing $\omega$ seems to significantly improve the guaranteed stability behaviour in general, an additional weight on the final term -- chosen too big -- may even invalidate our stability condition, cf. Figure \ref{impact_final_weight}. However, in the szenario of Example \ref{example_impact_final_weight2} we are able to compensate this drawback by shifting to a larger control horizon. Once more, we stress the fact that Theorem \ref{alpha_formula_thm} allows for easily calculating the $\alpha_{N,m}^\omega$-values which has to be taken into account.

\section{Example}\label{exsec}
In this section we compare our analytical results with numerical MPC simulations. The first example is a linear inverted pendulum which is solved for a grid of initial values. The second example is a nonlinear form of the inverted pendulum. We use it to show that enlarging the control horizon $m$ exhibits a stabilizing effect even for long optimization horizons $N$. Since numerical optimization for large control horizons appears to be difficult, we added a third example, a nonlinear arm--rotor--platform model, which allows us to show numerical results for $m=1, \ldots, N-1$.
\subsection{Linear Inverted Pendulum}
Our first example is a linear inverted pendulum on a cart given by
\begin{equation*}
	\dot{x}(t) = \left( \begin{array}{cccc} 0 & 1 & 0 & 0 \\ g &
            -k & 0 & 0 \\ 0 & 0 & 0 & 1 \\ 0 & 0 & 0 & 0 \end{array}
        \right) x + \left( \begin{array}{c} 0 \\ 1 \\ 0 \\ 1
          \end{array} \right) u, 
\end{equation*}
in which we want to stabilize the upright position $x^\star = (0, 0, 0, 0)$ using linear MPC. Here, we used the optimization horizon $N = 10$, the sampling interval $T = 0.7$ and the cost functional $J_N(x_0, u) = \sum_{n = 0}^{N - 1} \|Qx_u(n)\|_1 + \|Ru(n)\|_1$ with $Q = 2 \, \text{Id}$ and $R = 4\, \text{Id}$. Moreover, we use the constants $g = 9.81$ and $k = 0.1$ for gravitation and friction respectively.\\
Since Assumption \ref{3:ass:controllability} exhibits a set--valued nature we consider a uniform grid $\mathcal{G}$ of initial values from the set $[-0.05, 0.05]^2 \times [-1, 1] \times [-0.05, 0.05]$ which contains the origin $x^\star$. For each $m=1,\ldots,9$ we simulate the MPC closed loop trajectories $x_{\mu_{N,m}}$ with control horizon $m_i\equiv m$ and initial value $x \in \mathcal{G}$. Along each trajectory we then compute $\alpha_{N,m}^{\omega}$ as the minimum of the suboptimality degrees from Formula \eqref{2:prop:m-step suboptimality estimate:eq1} applied with $x_0=x_{\mu_{N,m}}(n)$, $n=0,m,2m,\ldots,19$. A selection of these values is plotted in Fig.\ \ref{5:fig:curve}, in which each dashed line represents the values $\alpha_{N,1}^{\omega},\ldots,\alpha_{N,N-1}^{\omega}$ for a corresponding initial value. In addition, the minima over all trajectories are plotted as a solid line.
\begin{figure}[!ht]
	\begin{center}
		\includegraphics[width=0.48\textwidth]{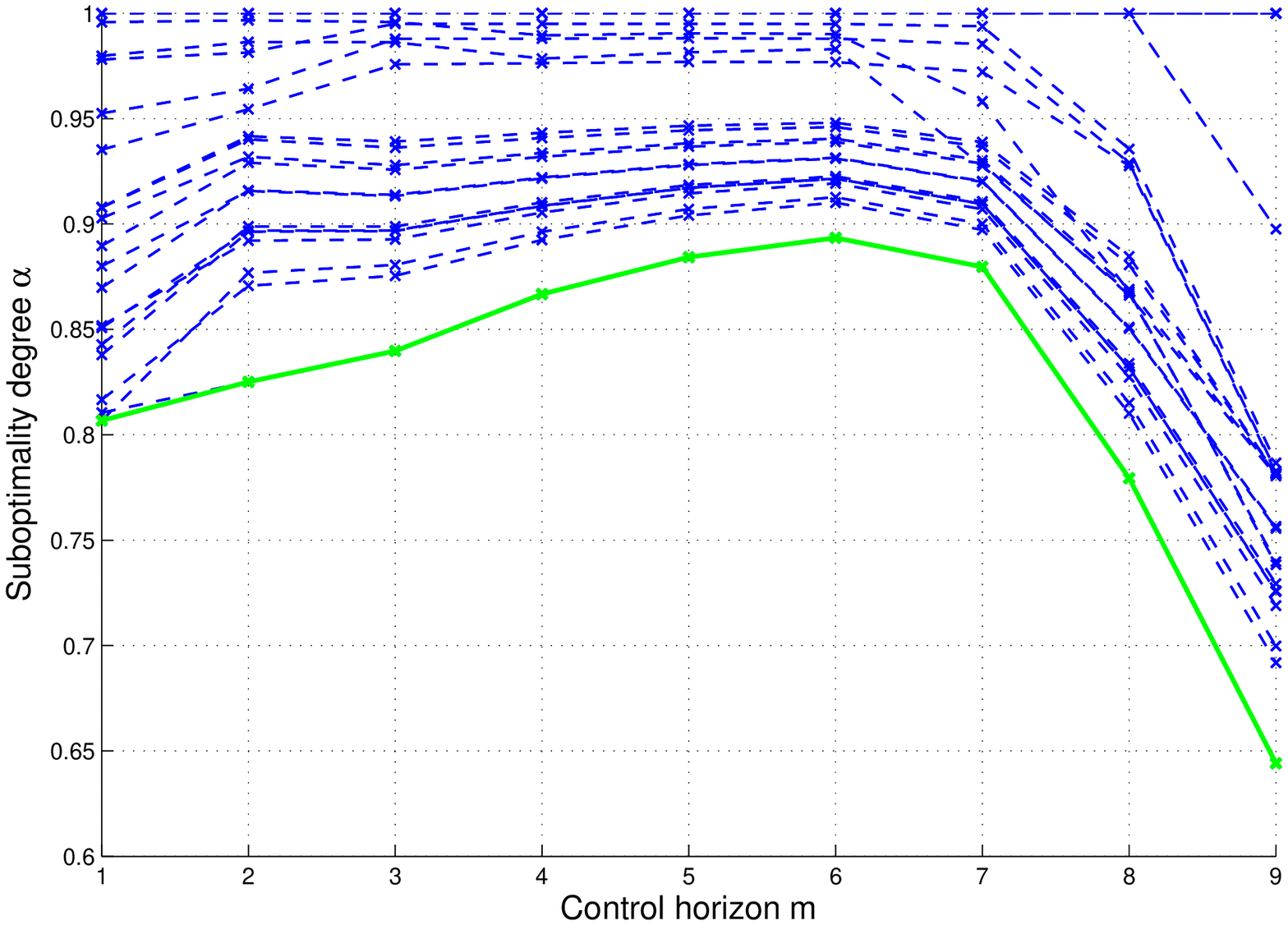}
		\hfill
		\includegraphics[width=0.48\textwidth]{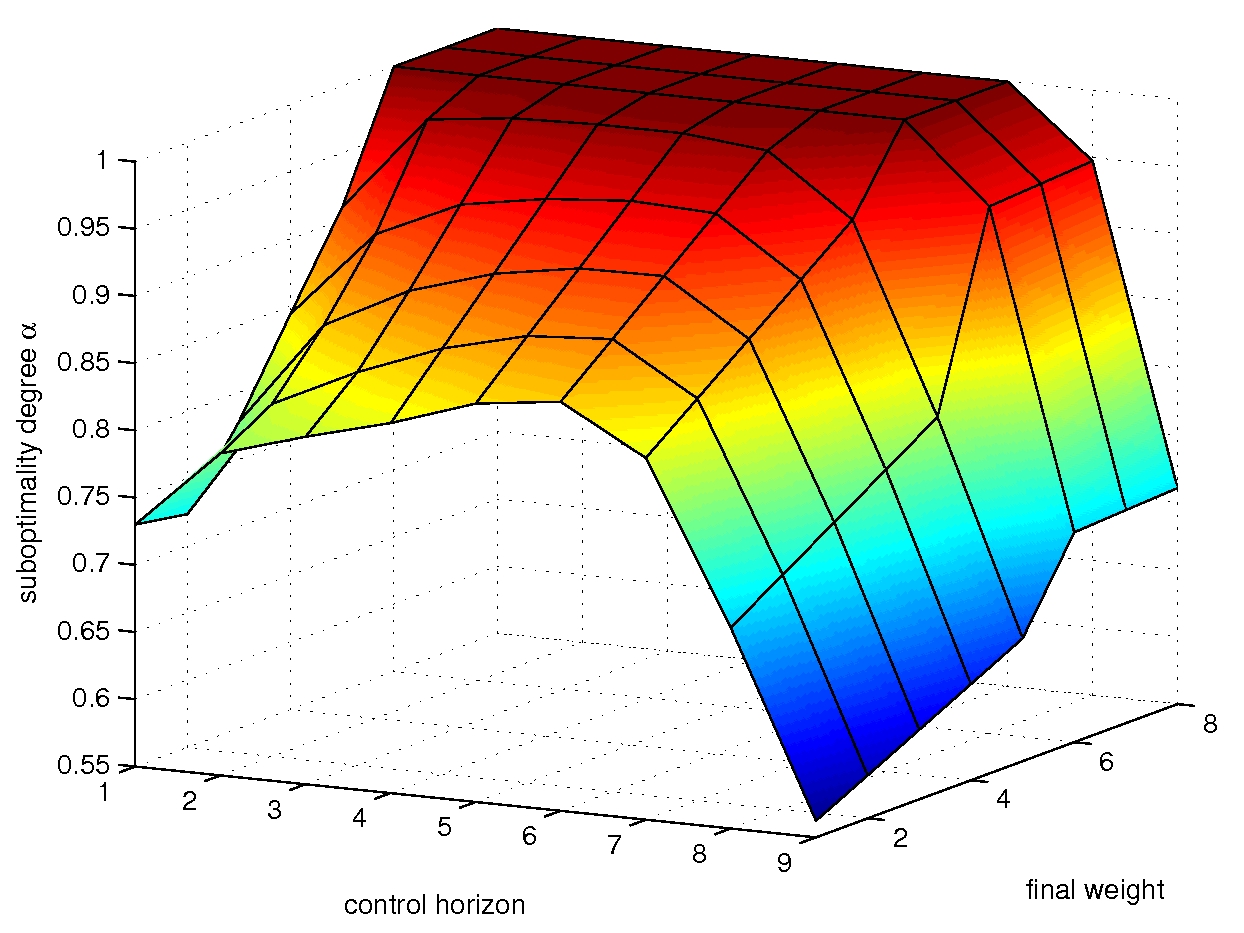}
	\end{center}
	\caption{Approximation of $\alpha_{10,m}^\omega$, $\omega \in \{1, \ldots, 8\}$, for the linear inverted pendulum.}
	\label{5:fig:curve}
\end{figure}

The results indicate that the closed loop is asymptotically stable for each $m_i$ and confirm that choosing control horizons $m_i > 1$ may indeed improve the suboptimality bound. Moreover, it is interesting to compare Fig.\ \ref{5:fig:curve} with Fig.\ \ref{typical_values}. While Fig.\ \ref{typical_values} shows the minimal $\alpha$-values for a set of exponentially controllable systems, the curves in Fig.\ \ref{5:fig:curve} represent the numerically computed $\alpha$-values for one particular system and a grid of initial values. Yet, the curves in Fig.\ \ref{5:fig:curve} at least approximately resemble the shape of the curves in Fig.\ \ref{typical_values}.\\
The second part of Fig.\ \ref{typical_values} treats the enlarged weights on the final term. Comparing the presented analytical results to our simulations depicted in the right part of Fig.\ \ref{5:fig:curve} we obtain the same tendency: If the weight on the final term is increased, then the degree of suboptimality $\alpha$ is growing faster for small control horizons $m$.
\subsection{Nonlinear Inverted Pendulum}
In order to show that similar effects can be experienced for nonlinear control problems, we consider the presented inverted pendulum on a cart problem in the nonlinear form
\begin{align*}
	\dot{x}_1(t) & = x_2(t)\\
	\dot{x}_2(t) & = -\frac{g}{l}\sin(x_1(t) + \pi) - \frac{k_L}{l} x_2(t) | x_2(t) | - u(t) \cos(x_1(t) + \pi) - k_R \mbox{sgn}(x_2(t))\\
	\dot{x}_3(t) & = x_4(t)\\
	\dot{x}_4(t) & = u(t)
\end{align*}
with gravitation constant $g = 9.81$, length $l = 10$ and friction terms $k_R = k_L = 0.01$. Again, we aim at stabilizing the upright position $x^\star = (0, 0, 0, 0)$. Here, we consider the stage cost
\begin{align*}
	l(x(i), u(i)) := & \int\limits_{t_i}^{t_{i+1}} 10^{-4} u(t)^2 + \Big( 3.51 \sin(x_1(t))^2 + 4.82 \sin(x_1(t)) x_2(t) + 2.31 x_2(t)^2 \\
	& \quad + 0.1 \left( (1 - \cos(x_1(t))) \cdot (1 + \cos(x_2(t))^2) \right)^2 + 0.01 x_3(t)^2 + 0.1 x_4(t)^2 \Big)^2 dt
\end{align*}
which gives us the cost functional
\begin{align*}
	J_N(x_0, u) = &\sum\limits_{i = 0}^{N - 2} l(x(i), u(i)) + \omega l(x(N-1), u(N-1))
\end{align*}
over an optimization horizon $N = 70$ and sampling length $T = 0.05$. Moreover, the tolerances of the optimization routine and the differential equation solver are set to $10^{-6}$ and $10^{-7}$ respectively. Since this cost functional is $2 \pi$--periodic, we add box--constraints limiting $X$ to the interval $[-2 \pi + 0.01, 2 \pi - 0.01]$. Using the initial value $x_0 = (\pi + 1.5, 0, 0, 0)$, we simulated the MPC closed--loop trajectories $x_{\mu_{N,m}}$ for $m = 1, \ldots, 10$ and final weights $\omega = 1, \ldots, 10$ with control horizon $m_i \equiv m$.\\
Since the numerical optimization is not reliable on start of the NMPC algorithm, a startup sequence of 20 NMPC iterations using $m=1$ is implemented to obtain an initial guess of the control which can be assumed to be close to the global optimum. Moreover, the problem appears to be practically stabilizable only. To compensate this issue, we used the estimation constant $\varepsilon = 10^{-4}$, cf. \cite[Theorem 21]{GruP09} for details.\\
In Fig.\ \ref{fig:nonlinear}, the minimal $\alpha$--value along a closed--loop trajectory is shown for a variety of final weights $\omega$ and control horizons $m$.

\begin{figure}[!ht]
	\begin{center}
		\includegraphics[width=0.48\textwidth]{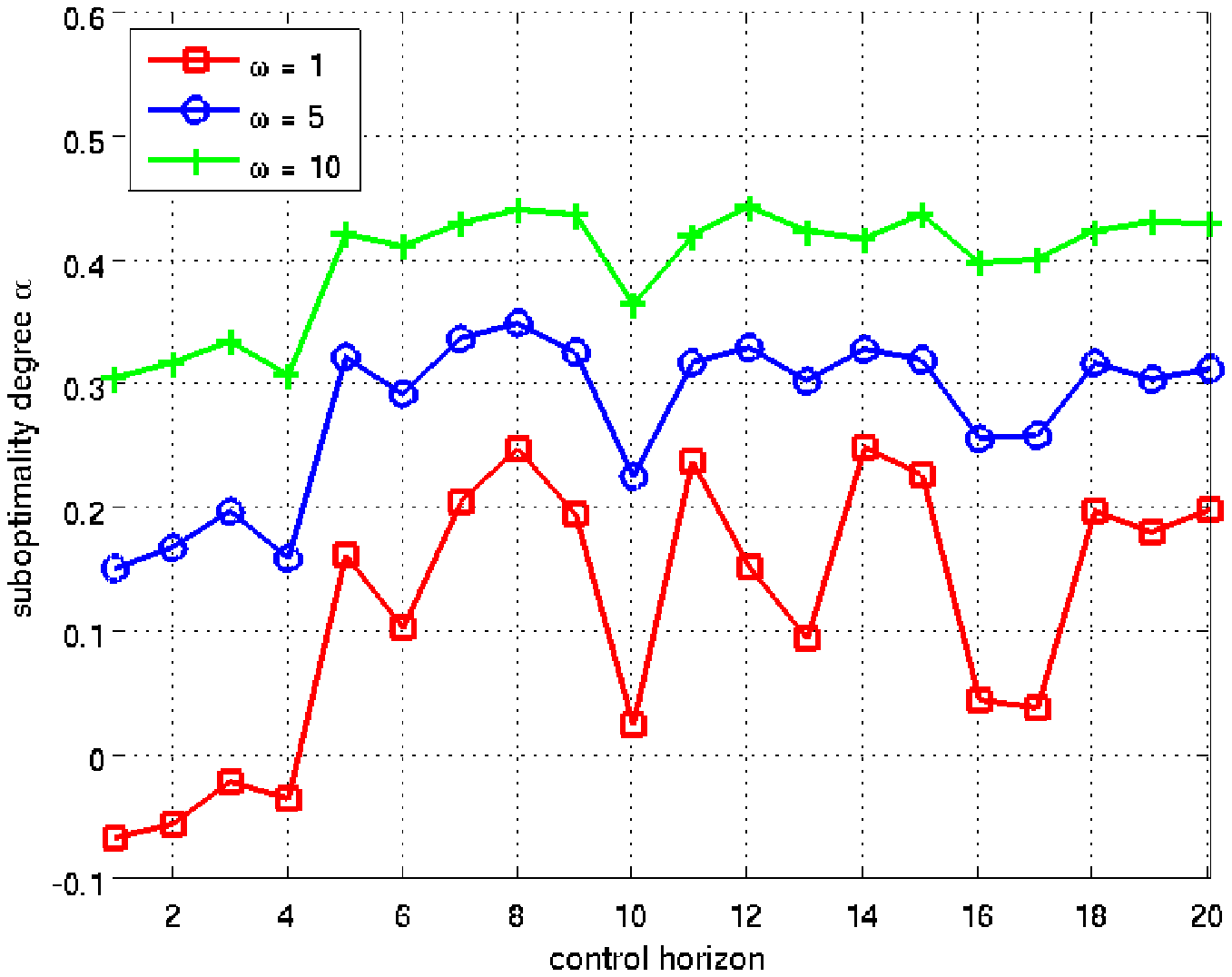}
		\hfill
		\includegraphics[width=0.48\textwidth]{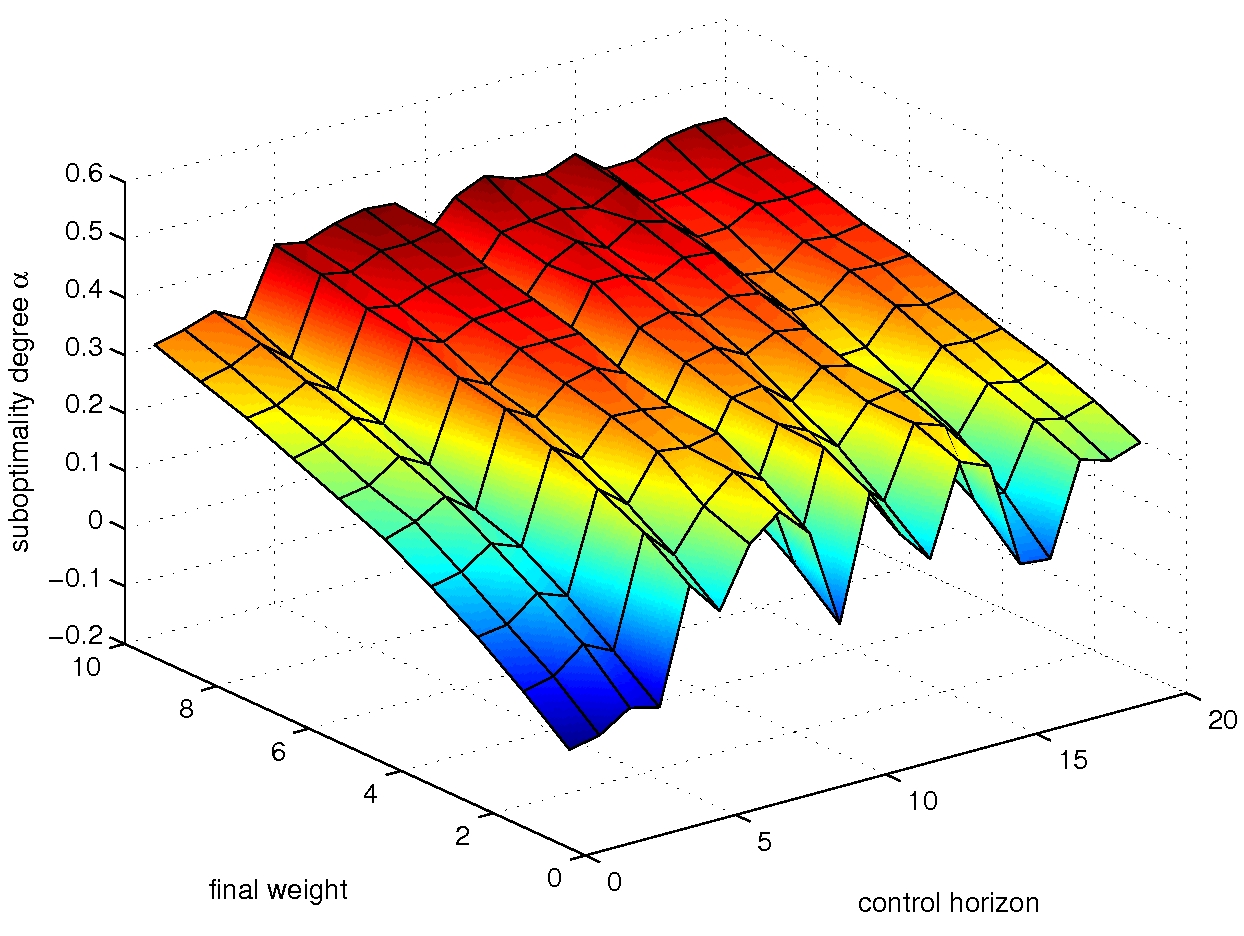}
	\end{center}
	\caption{Approximation of $\alpha_{70,m}^{\omega}$, $m \in \{1, \ldots, 10\}$, for the nonlinear inverted pendulum.}
	\label{fig:nonlinear}
\end{figure}

For this simulation, we were able to increase the range of $m$ from 3 to 20 for which acceptable $\alpha$ values can be computed by internally repeating the optimization at every sampling point. This allows us to keep track of a suitable initial guess of the control while the system is running in open--loop. For $m \geq 21$ we experience numerical problems during the optimization and although most of the resulting trajectories converge to the target point, we cannot see this from our estimates.\\
Note that for $\omega = 1$ the $\alpha$ values are negative for control horizons $m = 1, \ldots, 4$. Still, larger control horizons exhibit a positive $\alpha$ value such that stability is guaranteed. Additionally, an increase in $\alpha$ can be experienced for all control horizons $m$ considered in this example if $\omega$ is increased. This corresponds to the known stabilizing effect of terminal costs.\\
Since computing these estimates can be done with very small additional effort compared to the MPC procedure, such an analysis should be done before enlarging the optimization horizon.
\subsection{Nonlinear Arm--Rotor--Platform Modell}
Last, we consider an arm/\-rotor/\-platform (ARP) model:
\begin{align*}
	\dot{x}_{1}(t) & = x_{2}(t) + x_{6}(t) x_{3}(t) \\
	\dot{x}_{2}(t) & = -\frac{k_{1}}{M} x_{1}(t) - \frac{b_{1}}{M} x_{2}(t) + x_{6}(t) x_{4}(t) - \frac{m r b_{1}}{M^{2}} x_{6}(t) \\
	\dot{x}_{3}(t) & = - x_{6}(t) x_{1}(t) + x_{4}(t) \displaybreak[0] \\
	\dot{x}_{4}(t) & = - x_{6}(t) x_{2}(t) - \frac{k_{1}}{M} x_{3}(t) - \frac{b_{1}}{M} x_{4}(t) + \frac{m r k_{1}}{M^{2}} \\
	\dot{x}_{5}(t) & = x_{6}(t) \\
	\dot{x}_{6}(t) & = -a_{1} x_{5}(t) - a_{2} x_{6}(t) + a_{1} x_{7}(t) + a_{3} x_{8}(t) - p_{1} x_{1}(t) - p_{2} x_{2}(t) \\
	\dot{x}_{7}(t) & = x_{8}(t) \\
	\dot{x}_{8}(t) & = a_{4} x_{5}(t) + a_{5} x_{6}(t) - a_{4} x_{7}(t) - (a_{5} + a_{6}) x_{8}(t) + \frac{1}{J} u(t)
\end{align*}
For details on the specification of the model parameters we refer to \cite[Chapter 7.3.2]{FK96}. \\
For this example, we fix the initial values to $x(t_0) = (0, 0, 0, 0, 10, 0, 0, 0)$, the absolute and relative tolerances for the solver of the differential equation both to $10^{-10}$, the length of the open--loop horizon within the MPC--algorithm to $H = N \cdot T$ with $N = 13$ and sampling period $T = 0.05$, and set the optimality tolerance of the SQP solver to $10^{-8}$. Moreover, the cost functional is given by
\begin{align*}
	J(x, u) = \sum\limits_{j = 0}^{N - 2} \int_{t_j}^{t_{j+1}} \| x(t) \| + u(t)^2 dt + \omega \int_{t_{N-1}}^{t_{N}} \| x(t) \| + u(t)^2 dt
\end{align*}
and the practical region of the equilibrium is estimated using the constant $\varepsilon = 5 \cdot 10^{-7}$. Again, we used a startup sequence of 5 NMPC iterations with $m=1$ to improve the initial guess of the control.

\begin{figure}[!ht]
	\begin{center}
		\includegraphics[width=0.48\textwidth]{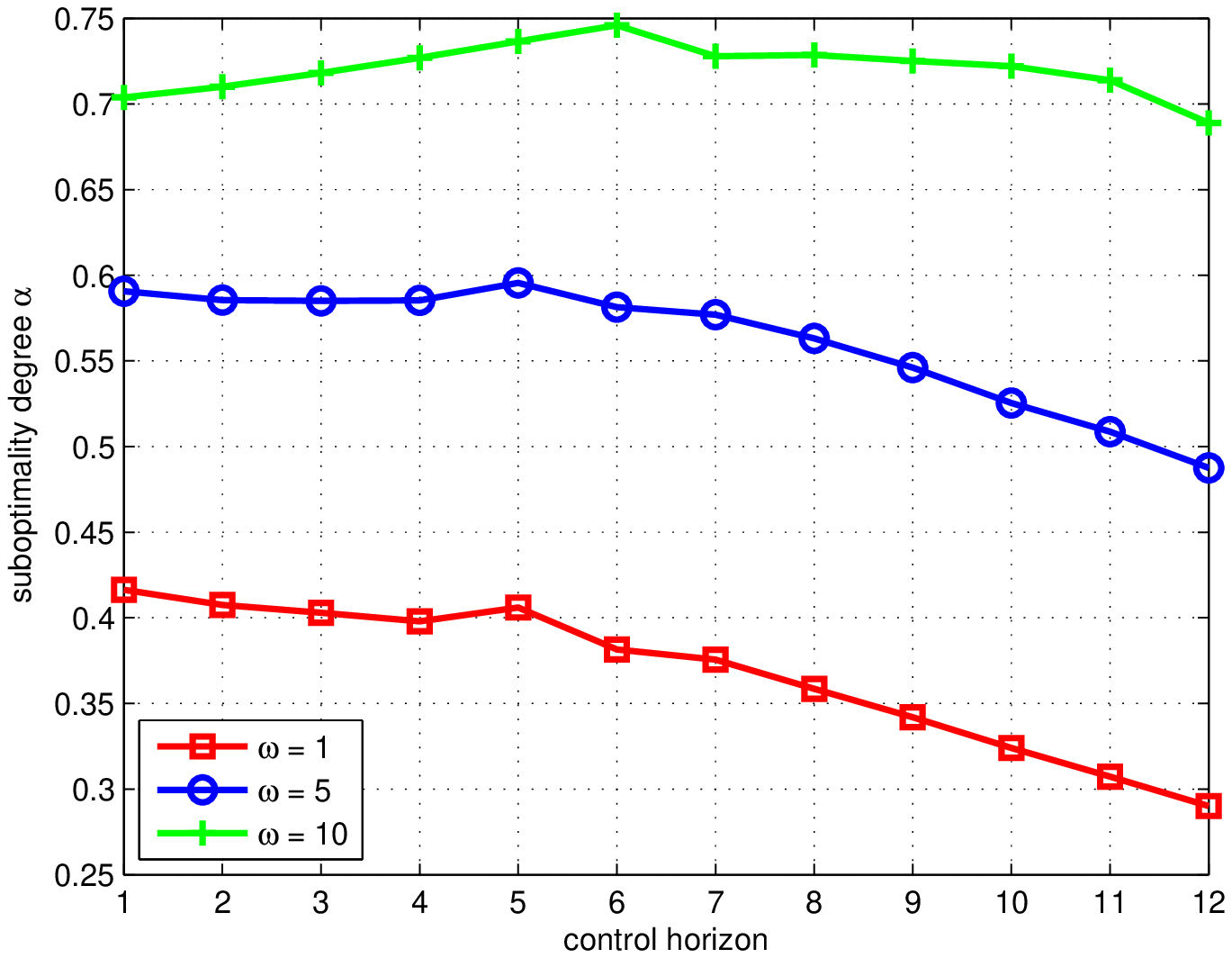}
		\hfill
		\includegraphics[width=0.48\textwidth]{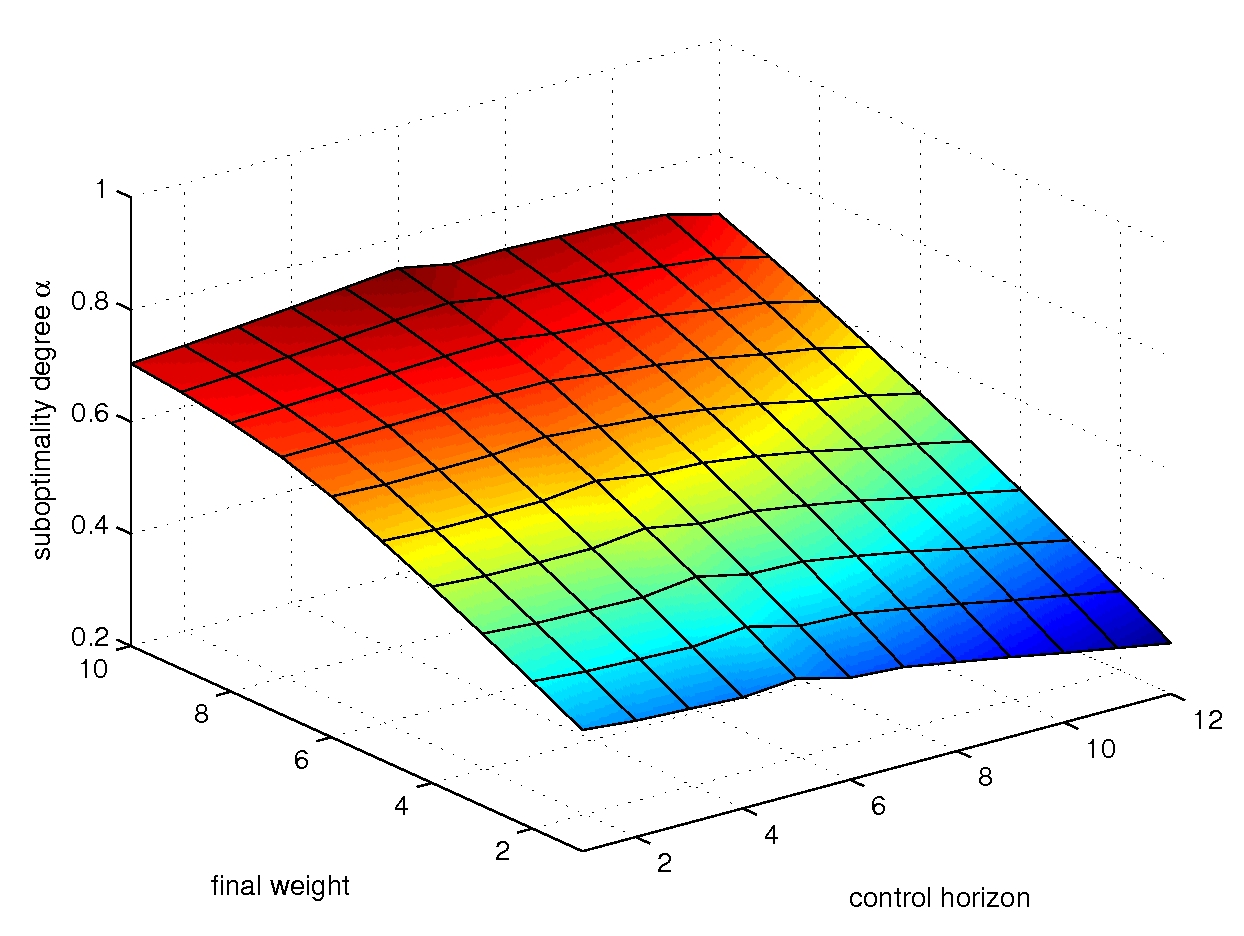}
	\end{center}
	\caption{Approximation of $\alpha_{13,m}^\omega$ for the arm--rotor--platform model.}
	\label{fig:arp}
\end{figure}

Similar to Fig.\ \ref{fig:nonlinear}, Fig.\ \ref{fig:arp} shows the minimal $\alpha$--value along a closed--loop trajectory for a variety of final weights $\omega$ and all possible control horizons $m = 1, \ldots, N$. Again, we obtain an improvement of our suboptimality estimate if the final weight $\omega$ is increased. Considering the control horizon $m$, however, we hardly experience any improvement for small $m$ and a decrease of $\alpha$ for $m$ being large.\\
Note that suboptimality estimates which are computed for every sampling instant inherit a different weighting of these instants if $m$ is changed. Yet, a fair comparison can be obtained if only those control horizon lengths are taken into account which are an integer factor of the largest one and use the estimate \eqref{2:prop:m-step suboptimality estimate:eq1}. In the left of Fig.\ \ref{fig:arp2} we display such a comparison for $m \in \{ 1, 2, 3, 4, 6, 12 \}$. The corresponding cost of the closed--loop costs $V^{\mu_{N,m}}$ considering the first  are shown on the right of Fig.\ \ref{fig:arp2}. It exhibits the expected decrease in $\omega$, but also the rise in $m$. Moreover, increasing the control horizon $m$ implies that the system remains in open--loop for a longer period of time which may be harmful even in terms of stability if modelling errors or external perturbations occur, cf. \cite{Magni}. A detailed quantitative analysis of these effects in our setting is currently under investigation.

\begin{figure}[!ht]
	\begin{center}
		\includegraphics[width=0.48\textwidth]{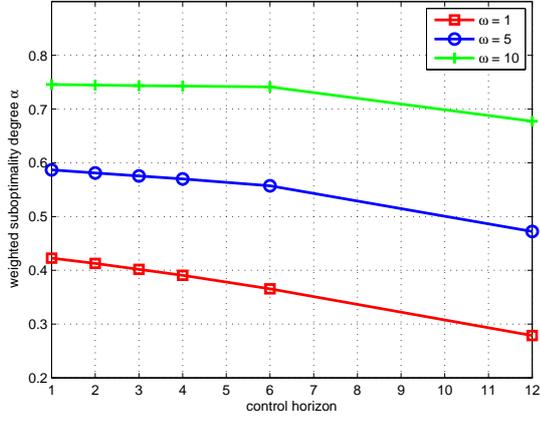}
		\hfill
		\includegraphics[width=0.48\textwidth]{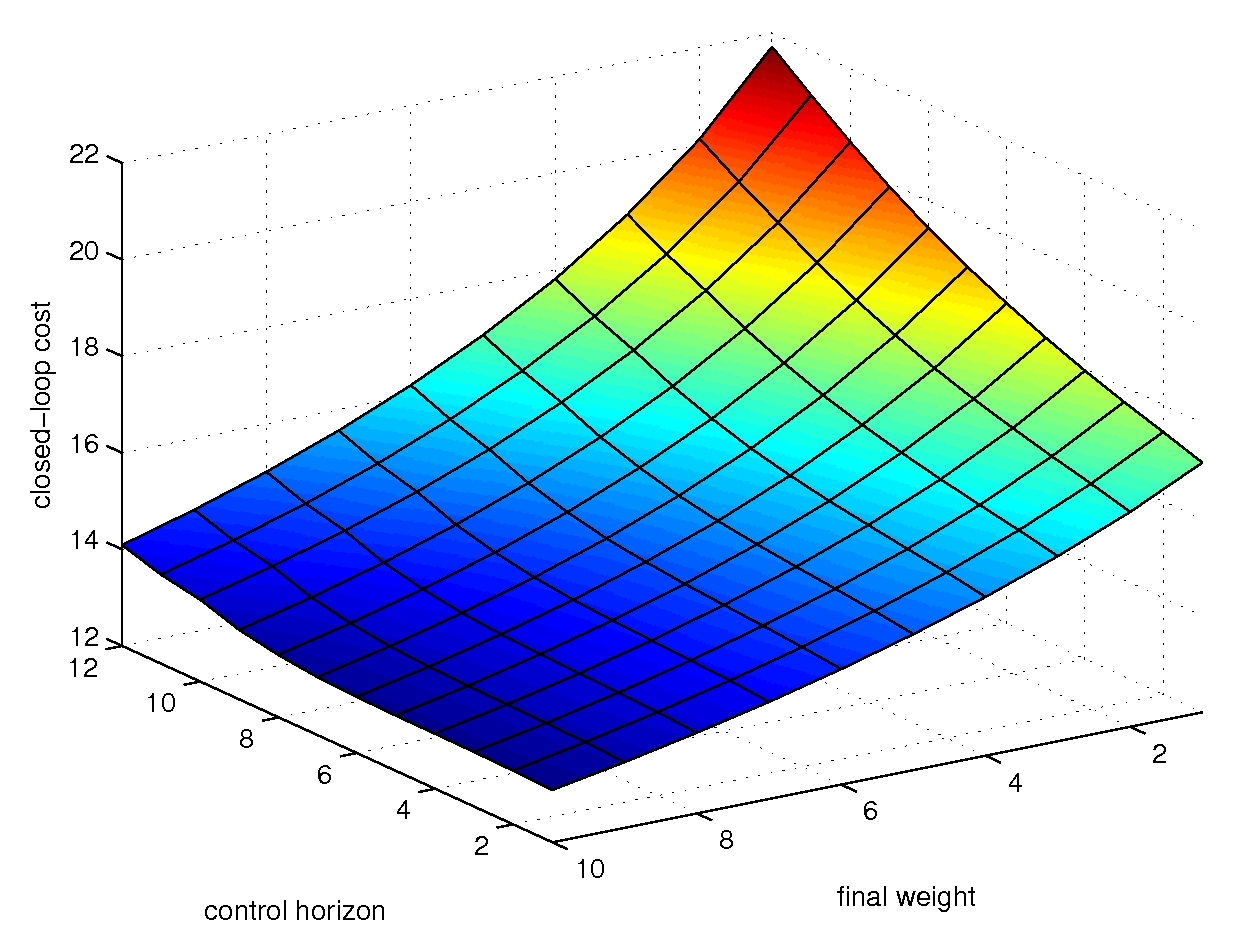}
	\end{center}
	\caption{Weighted approximation of $\alpha_{13,m}^\omega$ (left) and closed--loop cost (right) for the arm--rotor--platform model.}
	\label{fig:arp2}
\end{figure}

\section{Appendix}\label{appendix}

\subsection{Auxiliary results for Section \ref{alphasec}}
In this section we state and prove the technical Lemmata \ref{Appendix_Lemma_Inequalities}, \ref{appendix_technical_lemma:1}, \ref{appendix_lgs} and Corollary \ref{appendix_technical_corollary:1} which are used in order to derive formula \eqref{alpha_formula}.
\begin{lemma}\label{Appendix_Lemma_Inequalities}
	Let $N \in \mathbb{N}_{\geq 2}$, $m \in \{1,\ldots,N-1\}$, $\omega \geq 1$, and $\gamma_i$ be defined as in Proposition \ref{Alpha_Formula_LP_Lemma} satisfying \eqref{3:eq:submultiplicativity}. Then
	\begin{eqnarray*}
		& & (\gamma_{m+1}-\omega) \prod_{i=m+2}^{N-j+m-1} (\gamma_i - 1) \left[ \sum_{n=k}^{N-j+m+k-3} c_n + c_{N-j+m+k-2}\omega \right] \\
		& & \qquad - \left[ \sum_{n=N-j+k-2}^{N-j+m+k-3} c_n + c_{N-j+m+k-2}\omega - c_{N-j+k-2}\omega \right] \prod_{i=m+1}^{N-j+m-1} \gamma_i \geq 0 \qquad\forall\ k \in \mathbb{N}
	\end{eqnarray*}
	holds for $j=N-2,\ldots,m$.
\end{lemma}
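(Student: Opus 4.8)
The plan is to establish the inequality by downward induction on $j$, from $j=N-2$ to $j=m$, while crucially keeping the quantifier ``for all $k\in\N$'' inside the induction hypothesis. I will work in the regime $\gamma_{m+1}-\omega>0$, which is the only case in which this lemma is invoked in the proof of Theorem \ref{alpha_formula_thm} (when $\gamma_{m+1}-\omega\le 0$ that proof yields $\alpha_{N,m}^\omega=1$ without appealing to this estimate, and the inequality can in fact fail for $\gamma_{m+1}-\omega\le 0$). Two elementary facts are used throughout. First, since $B_i(r)\ge\beta(r,0)=c_0 r\ge r$ (as $c_0\ge 1$ by Assumption \ref{3:ass:controllability} at $n=0$), every $\gamma_i\ge 1$, so all products $\prod(\gamma_i-1)$ and $\prod\gamma_i$ are nonnegative; moreover in the present regime they are strictly positive, because $\gamma_i=1$ for some $i\ge m+2$ would force $c_1=\dots=c_m=0$ and hence $\gamma_{m+1}=c_0=1\le\omega$. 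Second, for $\beta$ linear in its first argument, submultiplicativity \eqref{3:eq:submultiplicativity} reads $c_{n+l}\le c_n c_l$.

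To organise the recursion I abbreviate the two bracketed factors and the two products by
\[
 L_j(k):=\sum_{n=k}^{N-j+m+k-3}c_n+\omega\,c_{N-j+m+k-2},\qquad R_j(k):=\sum_{n=N-j+k-2}^{N-j+m+k-3}c_n+\omega\,c_{N-j+m+k-2}-\omega\,c_{N-j+k-2},
\]
\[
 P^{(1)}_j:=\prod_{i=m+2}^{N-j+m-1}(\gamma_i-1),\qquad P^{(2)}_j:=\prod_{i=m+1}^{N-j+m-1}\gamma_i,
\]
so that the assertion reads $(\gamma_{m+1}-\omega)\,P^{(1)}_j\,L_j(k)\ge P^{(2)}_j\,R_j(k)$. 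A direct check of the index ranges yields the recursions $L_j(k)=c_k+L_{j+1}(k+1)$, $R_j(k)=R_{j+1}(k+1)$, $P^{(1)}_j=(\gamma_{N-j+m-1}-1)\,P^{(1)}_{j+1}$ and $P^{(2)}_j=\gamma_{N-j+m-1}\,P^{(2)}_{j+1}$, which are exactly what links consecutive levels of the induction.

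For the base case $j=N-2$ the product $P^{(1)}_{N-2}$ is empty (equal to $1$), $P^{(2)}_{N-2}=\gamma_{m+1}$ and $R_{N-2}(k)=L_{N-2}(k)-\omega c_k$, so the claim collapses to $\gamma_{m+1}\,c_k\ge L_{N-2}(k)=\sum_{n=k}^{m+k-1}c_n+\omega c_{m+k}$; expanding $\gamma_{m+1}=\sum_{n=0}^{m-1}c_n+\omega c_m$ and applying $c_n c_k\ge c_{n+k}$ termwise proves it (and, incidentally, this step needs no sign hypothesis). For the inductive step I set $G:=\gamma_{N-j+m-1}\ge 1$ and $A:=(\gamma_{m+1}-\omega)\,P^{(1)}_{j+1}>0$. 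Multiplying the induction hypothesis $A\,L_{j+1}(k+1)\ge P^{(2)}_{j+1}\,R_{j+1}(k+1)$ by $G\ge 0$ and substituting the recursions reduces the target inequality to $A\bigl[(G-1)\,c_k-L_{j+1}(k+1)\bigr]\ge 0$; since $A>0$ this is precisely the auxiliary estimate $(\gamma_{N-j+m-1}-1)\,c_k\ge L_{j+1}(k+1)$, which once more follows from $c_n c_k\ge c_{n+k}$ after expanding $\gamma_{N-j+m-1}$, the leading $-c_k$ cancelling the $n=k$ summand so that the sum starts at $n=k+1$.

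The main obstacle is structural rather than computational: the induction cannot be run in $j$ for a fixed $k$, since the step from level $j+1$ to level $j$ forcibly invokes the hypothesis at the shifted argument $k+1$ (this is dictated by $L_j(k)=c_k+L_{j+1}(k+1)$ and $R_j(k)=R_{j+1}(k+1)$). Getting the ``for all $k$'' bookkeeping right --- in particular the exact matching of the shifting index ranges in the two submultiplicativity estimates, and confirming that $A>0$ (equivalently, that every $\gamma_i-1$ is strictly positive) throughout $i\in\{m+2,\dots,N-j+m-1\}$ --- is where the care is required; the only genuine inequalities used are the two instances of $c_{n+l}\le c_n c_l$.
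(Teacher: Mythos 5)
Your proof is correct and takes essentially the same route as the paper's: a downward induction on $j$ that carries the full quantifier over $k$ in the hypothesis (invoking it at the shifted argument $k+1$), the identical base case computation at $j=N-2$ via $\gamma_{m+1}c_k\geq\sum_{n=k}^{m+k-1}c_n+\omega c_{m+k}$, and the identical decomposition of the inductive step into the term $A\bigl[(G-1)c_k-L_{j+1}(k+1)\bigr]$ (nonnegative by submultiplicativity) plus $\gamma_{N-j+m-1}$ times the hypothesis at $(j+1,k+1)$. Your explicit restriction to the regime $\gamma_{m+1}-\omega>0$ is a legitimate sharpening rather than a deviation: the paper's step silently requires this sign as well (its first summand carries the factor $\gamma_{m+1}-\omega$ against two nonnegative factors), the statement can indeed fail when $\gamma_{m+1}<\omega$ (e.g.\ $N=4$, $m=1$, $\omega=10$, $c_0=1$, $c_1=2/5$, $c_2=1/100$, $c_n=0$ for $n\geq 3$, at $j=k=1$), and only the regime you treat is ever used in the proof of Theorem \ref{alpha_formula_thm}.
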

\begin{proof}
	We carry out an induction with respect to $j$. The induction start, $j=N-2$, follows for arbitrary $k \in \mathbb{N}$ from
	\begin{eqnarray*}
		& & (\gamma_{m+1}-\omega) \left[ \sum_{n=k}^{m+k-1} c_n + c_{m+k}\omega \right] - \left[ \sum_{n=k}^{m+k-1} c_n + c_{m+k}\omega - c_k\omega \right] \gamma_{m+1} \\
		& = & c_k \gamma_{m+1} \omega - \omega \left[ \sum_{n=k}^{m+k-1} c_n + c_{m+k}\omega \right] = \omega \left[ \sum_{n=0}^{m-1} (c_k c_n - c_{n+k}) + (c_k c_m - c_{k+m}) \omega \right] \stackrel{\eqref{3:eq:submultiplicativity}}{\geq} 0.
	\end{eqnarray*}
	In order to carry out the induction step from $j+1 \rightsquigarrow j$ we rewrite the inequality in consideration for arbitrary but fixed $k \in \mathbb{N}$:
	\begin{eqnarray*}
		& & (\gamma_{m+1}-\omega) \prod_{i=m+2}^{N-j+m-2} (\gamma_i - 1) \left[ c_k \gamma_{N-j+m-1} - \sum_{n=k}^{N-j+m+k-3} c_n - c_{N-j+m+k-2}\omega \right] \\
		& & \qquad + \gamma_{N-j+m-1} \Bigg[ (\gamma_{m+1}-\omega) \prod_{i=m+2}^{N-j+m-2}(\gamma_i-1)\left[\sum_{n=k+1}^{N-j+m+k-3} c_n + c_{N-j+m+k-2}\omega\right] \\
		& & \qquad\quad \phantom{\gamma_{N-j+m-1} \Bigg[} - \left[\sum_{n=N-j+k-2}^{N-j+m+k-3} c_n + c_{N-j+m+k-2}\omega - c_{N-j+k-2}\omega\right] \prod_{i=m+1}^{N-j+m-2} \gamma_i\Bigg] \geq 0.
	\end{eqnarray*}
	The positivity of this expression which consists of two summand follows from \eqref{3:eq:submultiplicativity} and the induction assumption for $j+1$ and $k+1$.
\end{proof}

In order to prove Lemma \ref{appendix_lgs} and -- as a consequence -- Theorem \ref{alpha_formula_thm} we require the following technical assertions which are stated in Lemma \ref{appendix_technical_lemma:1} and Corollary \ref{appendix_technical_corollary:1}. Moreover, note that $d_i < 0, \linebreak[0] a_i, \linebreak[0] b_i > 0$.

\begin{lemma} \label{appendix_technical_lemma:1}
	Assume $\delta_{N-1-i} \in \mathbb{R}$. Then it holds for $i \in \mathbb{N}_{\geq 1}$
	\begin{equation} \label{appendix_technical_lemma:1_eq1}
		\prod_{j=1}^{i-1} (1+\delta_{N-1-j}) = \sum_{k=0}^{i-1} \left( \prod_{j=1}^{k-1} (1+\delta_{N-1-j}) \prod_{j=k+1}^{i-1} \delta_{N-1-j} \right).
	\end{equation}
\end{lemma}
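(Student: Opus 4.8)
The statement is a purely algebraic identity in the real numbers $\delta_{N-1-1},\ldots,\delta_{N-1-(i-1)}$, with no reference to the controllability setup, so I would prove it in isolation. There are two natural routes: an induction on $i$ in which one peels off the top factor $(1+\delta_{N-1-(i-1)})$, or a telescoping-sum argument. I find the telescoping cleaner, so I would organize the proof around it, keeping the induction in reserve as an equivalent alternative.

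For the telescoping, I would introduce, for $k = 0,\ldots,i-1$, the quantity
\[
	T_k := \prod_{j=1}^{k}(1+\delta_{N-1-j}) \prod_{j=k+1}^{i-1}\delta_{N-1-j},
\]
with the usual convention that empty products equal $1$. The two extreme values are exactly the objects appearing in the claim: $T_{i-1} = \prod_{j=1}^{i-1}(1+\delta_{N-1-j})$ is the asserted left-hand side, while $T_0 = \prod_{j=1}^{i-1}\delta_{N-1-j}$. The key computation is that, for $1 \le k \le i-1$, the consecutive difference factors as
\[
	T_k - T_{k-1} = \prod_{j=1}^{k-1}(1+\delta_{N-1-j})\prod_{j=k+1}^{i-1}\delta_{N-1-j}\,\bigl[(1+\delta_{N-1-k}) - \delta_{N-1-k}\bigr],
\]
and the bracket collapses to $1$, so that $T_k - T_{k-1}$ is precisely the $k$-th summand on the right-hand side of \eqref{appendix_technical_lemma:1_eq1}.

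It then remains to telescope and to reconcile the index $k=0$. Summing $T_k - T_{k-1}$ over $k=1,\ldots,i-1$ yields $T_{i-1} - T_0$; equivalently, the sum of the summands for $k=1,\ldots,i-1$ equals $T_{i-1} - T_0$. Since the $k=0$ summand $\prod_{j=1}^{-1}(1+\delta_{N-1-j})\prod_{j=1}^{i-1}\delta_{N-1-j}$ is exactly $T_0$, adding it back recovers $T_{i-1}$, which is the claimed left-hand side; the degenerate case $i=1$ (both sides equal $1$) is already covered by the empty-product convention. I do not expect any genuine obstacle here: the content is a single one-line cancellation, and the only care needed is the bookkeeping of empty products and of the index range, so that the boundary terms $k=0$ and $k=i-1$ are handled correctly.
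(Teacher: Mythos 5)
Your proof is correct, but it follows a different route from the paper. The paper establishes \eqref{appendix_technical_lemma:1_eq1} by induction on $i$: after the immediate cases $i=1,2$, it peels off the top factor, writing $\prod_{j=1}^{i-1}(1+\delta_{N-1-j}) = \prod_{j=1}^{i-2}(1+\delta_{N-1-j}) + \delta_{N-1-(i-1)}\prod_{j=1}^{i-2}(1+\delta_{N-1-j})$, applies the induction hypothesis to the second term, absorbs $\delta_{N-1-(i-1)}$ into the inner products, and recognizes the leftover first term as the $k=i-1$ summand. Your telescoping argument replaces this recursion by an explicit interpolation: the hybrid products $T_k$ walk from $T_0=\prod_{j=1}^{i-1}\delta_{N-1-j}$ to $T_{i-1}=\prod_{j=1}^{i-1}(1+\delta_{N-1-j})$ by switching one factor at a time, and each step $T_k - T_{k-1}$ collapses, via the cancellation $(1+\delta_{N-1-k})-\delta_{N-1-k}=1$, to exactly the $k$-th summand of the right-hand side. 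The two arguments are of course closely related --- unrolling the paper's induction step by step reproduces precisely your telescoping sum --- but yours is self-contained in a single cancellation, makes transparent where each summand comes from, and is less prone to index slips; indeed, in the paper's induction step the sum delivered by the induction hypothesis should run from $k=0$ rather than $k=1$ (a harmless typo, since the missing term $\delta_{N-1-(i-1)}\prod_{j=1}^{i-2}\delta_{N-1-j}$ is exactly the $k=0$ summand of the target), whereas your explicit handling of $T_0$ deals with precisely this boundary term. Both proofs cover the degenerate case $i=1$ by the empty-product convention, and both specialize equally well to the constant case $\delta_j\equiv\delta$ used in \eqref{appendix_technical_corollary:1_eq2}.
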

\begin{proof} 
	We carry out an induction over $i$ to prove \eqref{appendix_technical_lemma:1_eq1}. 
	Since the correctness for $i=1,2$ is obvious, we proceed directly with the induction step
	\begin{eqnarray*}
		\prod_{j=1}^{i-1} (1+\delta_{N-1-j}) & = & \prod_{j=1}^{i-2} (1+\delta_{N-1-j}) + \delta_{N-1-(i-1)} \prod_{j=1}^{i-2} (1+\delta_{N-1-j}) \\
		& \stackrel{I.A.}{=} & \prod_{j=1}^{i-2} (1+\delta_{N-1-j}) + \delta_{N-1-(i-1)} \sum_{k=1}^{i-2} \left( \prod_{j=1}^{k-1} (1+\delta_{N-1-j}) \prod_{j=k+1}^{i-2} \delta_{N-1-j} \right) \\
		& = & \sum_{k=0}^{i-1} \left( \prod_{j=1}^{k-1} (1+\delta_{N-1-j}) \prod_{j=k+1}^{i-1} \delta_{N-1-j} \right).
	\end{eqnarray*}
\end{proof}
\begin{corollary}\label{appendix_technical_corollary:1}
	As a consequence of \eqref{appendix_technical_lemma:1_eq1}
	\begin{equation} \label{appendix_technical_corollary:1_eq1}
		\prod_{j=m+2}^N \gamma_j = \prod_{j=m+2}^N (\gamma_j - 1) + \sum_{k=m+2}^N \left( \prod_{j=m+2}^{k-1} \gamma_j \prod_{j=k+1}^N (\gamma_j - 1) \right).
	\end{equation}
	holds for $\gamma_i \in \mathbb{R}$. Moreover, one obtains for $j=N-1,\ldots,2$
	\begin{equation}\label{appendix_technical_corollary:1_eq2}
		(1+\delta)^{N-1-j} = \sum_{i=0}^{N-2-j} (1+\delta)^{N-2-j-i} \delta^i + \delta^{N-1-j}	
	\end{equation}
\end{corollary}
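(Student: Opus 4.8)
The plan is to read off both identities from the already-proved Lemma~\ref{appendix_technical_lemma:1}: since \eqref{appendix_technical_lemma:1_eq1} is a purely algebraic identity valid for \emph{arbitrary} real numbers $\delta_{N-1-j}$, I am free to substitute whatever values are convenient and then merely reindex.

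For \eqref{appendix_technical_corollary:1_eq1} I would apply Lemma~\ref{appendix_technical_lemma:1} with $i := N-m$ and with the choice $1+\delta_{N-1-\ell} := \gamma_{m+1+\ell}$ (equivalently $\delta_{N-1-\ell} = \gamma_{m+1+\ell}-1$) for $\ell = 1,\ldots,N-m-1$. Under the index shift $j = m+1+\ell$ the left-hand side $\prod_{\ell=1}^{i-1}(1+\delta_{N-1-\ell})$ turns into $\prod_{j=m+2}^N\gamma_j$. On the right-hand side I would split off the summand $k=0$: its first factor is an empty product and its second factor is $\prod_{\ell=1}^{N-m-1}(\gamma_{m+1+\ell}-1)=\prod_{j=m+2}^N(\gamma_j-1)$, which is exactly the isolated term in \eqref{appendix_technical_corollary:1_eq1}. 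The remaining summands $k=1,\ldots,N-m-1$, after the same shift $j=m+1+\ell$ in the two inner products and the outer reindexing $k\mapsto k+m+1$, become $\sum_{k=m+2}^N\prod_{j=m+2}^{k-1}\gamma_j\prod_{j=k+1}^N(\gamma_j-1)$, which completes \eqref{appendix_technical_corollary:1_eq1}.

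For \eqref{appendix_technical_corollary:1_eq2} I would instead specialize Lemma~\ref{appendix_technical_lemma:1} to the constant sequence $\delta_{N-1-j}\equiv\delta$ and take its parameter equal to $N-j$. The left-hand side is then $(1+\delta)^{N-1-j}$, the $k=0$ summand contributes $\delta^{N-1-j}$, and the summands $k\geq1$ collapse, after the reindexing $p=k-1$, to the geometric tail $\sum_{i=0}^{N-2-j}(1+\delta)^{N-2-j-i}\delta^i$. Equivalently --- and this is the slicker route that avoids all bookkeeping --- both claims are instances of the elementary telescoping factorization $\prod_j a_j-\prod_j b_j=\sum_k\big(\prod_{j<k}a_j\big)(a_k-b_k)\big(\prod_{j>k}b_j\big)$: taking $a_j=\gamma_j$, $b_j=\gamma_j-1$ over $j=m+2,\ldots,N$ gives $a_k-b_k=1$ and hence \eqref{appendix_technical_corollary:1_eq1}, while the scalar specialization $x^n-y^n=(x-y)\sum_{i=0}^{n-1}x^{n-1-i}y^i$ with $x=1+\delta$, $y=\delta$, $n=N-1-j$ gives \eqref{appendix_technical_corollary:1_eq2}.

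There is no analytic content here, so I expect the only point requiring care to be the boundary bookkeeping: correctly evaluating the empty products at $k=0$ (and at the endpoints $j=N-1$ and $j=2$ of the stated range), since an off-by-one in the index shifts is the single place where the argument could silently fail. Once those conventions are fixed the identities are immediate.
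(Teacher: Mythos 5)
Your proposal is correct and follows essentially the same route as the paper: equation \eqref{appendix_technical_corollary:1_eq1} is obtained from Lemma \ref{appendix_technical_lemma:1} with $i=N-m$ via the substitution $\delta_{N-1-\ell}=\gamma_{m+1+\ell}-1$, extraction of the $k=0$ summand, and an index shift, while \eqref{appendix_technical_corollary:1_eq2} follows from the constant specialization $\delta_i\equiv\delta$ of the same lemma (the paper invokes the lemma's proof for arbitrary lengths, which amounts to the same specialization). Your closing observation that both identities are instances of the elementary telescoping factorization $\prod_j a_j-\prod_j b_j=\sum_k\bigl(\prod_{j<k}a_j\bigr)(a_k-b_k)\bigl(\prod_{j>k}b_j\bigr)$ is a valid, more self-contained shortcut, but it does not change the substance of the argument.
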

\begin{proof}
	To see the equivalence of \eqref{appendix_technical_lemma:1_eq1}, $i = N-m > 0$, and \eqref{appendix_technical_corollary:1_eq1}, we extract the summand for $k=0$ from the right hand side of \eqref{appendix_technical_lemma:1_eq1} and substitute $\delta_{N-1-j}$ by $\gamma_{m+1+j} - 1$. Then, shifts with respect to the considered control variables yield the assertion. \eqref{appendix_technical_corollary:1_eq2} is a direct consequence of the proof of \eqref{appendix_technical_lemma:1_eq1} which we performed for arbitrary natural numbers with $\delta_i=\delta$, $i=2,\ldots,N-1$.
\end{proof}
\begin{lemma}\label{appendix_lgs}
	Let $\gamma_{m+1}$ be strictly greater than $\omega$. Then the optimal solution $\lambda$ of Problem \ref{Alpha_Formula_Relaxed_Problem} satisfies $A \lambda = b$,\, $\lambda > 0$ componentwise.
\end{lemma}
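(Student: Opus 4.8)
The plan is to exploit the sign pattern of $A$ recorded just before Lemma \ref{appendix_technical_lemma:1}, namely $d_i<0$ and $a_i,b_i>0$, in order to reduce the relaxed linear program to a chain of one-sided bounds and then argue greedily. Since $\gamma_{m+1}-\omega>0$ by hypothesis, minimizing $1-(\gamma_{m+1}-\omega)\lambda_{N-1}$ is the same as maximizing $\lambda_{N-1}$. Writing $\delta_j:=-d_j>0$, every row of $A\lambda\le\bar{b}$ except the first reads $-\delta_j\lambda_j+\sum_{i=j+1}^{N-2}\lambda_i+b_j\lambda_{N-1}\le 0$, i.e.
\[
  \lambda_j \ \ge\ \frac{1}{\delta_j}\Bigl(\sum_{i=j+1}^{N-2}\lambda_i+b_j\lambda_{N-1}\Bigr),\qquad j=1,\dots,N-2,
\]
a family of lower bounds on $\lambda_j$ in terms of the later components and of $\lambda_{N-1}$, whereas the first row $\sum_{i=1}^{N-2}a_i\lambda_i+\omega\lambda_{N-1}\le\gamma_N-1$ is the only upper bound and the sole mechanism capping $\lambda_{N-1}$.

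First I would show that at an optimum the lower-bound rows are all active and the corresponding components are strictly positive. Fixing the optimal value of $\lambda_{N-1}$, define $\hat\lambda_j$ for $j=N-2,\dots,1$ in decreasing order by saturating the inequalities above. A downward induction shows that every feasible $\lambda$ with this value of $\lambda_{N-1}$ satisfies $\lambda_j\ge\hat\lambda_j$: the base case $j=N-2$ is $\lambda_{N-2}\ge b_{N-2}\lambda_{N-1}/\delta_{N-2}=\hat\lambda_{N-2}$, and the step uses $\lambda_i\ge\hat\lambda_i$ for $i>j$ together with $\delta_j>0$. Replacing $\lambda$ by $\hat\lambda$ leaves the objective unchanged and, since every $a_j>0$, can only enlarge the slack in the first row, so feasibility is preserved; hence there is an optimum at which rows $2,\dots,N-1$ hold with equality. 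As each $\hat\lambda_j$ is a strictly positive combination of the quantities $b_i\lambda_{N-1}$, this construction also exhibits $\lambda_j>0$ for $j\le N-2$ (once $\lambda_{N-1}>0$ is known).

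Next I would activate the first row and conclude. With rows $2,\dots,N-1$ active, back-substitution expresses each $\lambda_j$ ($j\le N-2$) as an explicit, strictly positive, strictly increasing linear function of $\lambda_{N-1}$, so the left-hand side of the first row is strictly increasing in $\lambda_{N-1}$. Maximizing $\lambda_{N-1}$ therefore drives the first row to equality as well, giving the full system $A\lambda=\bar{b}$. Moreover, letting $\lambda_{N-1}\downarrow 0$ in this parametrization sends the first-row left-hand side to $0<\gamma_N-1$, so positive values of $\lambda_{N-1}$ are feasible and the maximum satisfies $\lambda_{N-1}>0$; here $\gamma_N-1>0$ because $\gamma_N\ge c_0\ge1$, with equality to $1$ only in the one-step case $c_0=1$, $c_n=0$ $(n\ge1)$, which forces $\gamma_{m+1}=1\le\omega$ and is excluded by hypothesis. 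Thus $\lambda>0$ componentwise, and since the active set now comprises all $N-1$ rows of $A$ and the system $A\lambda=\bar{b}$ is uniquely solvable by back-substitution (the diagonal entries $d_1,\dots,d_{N-2}$ are nonzero), the optimizer is unique.

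I expect the main obstacle to be the greedy replacement step of the second paragraph: one must verify carefully that lowering the mutually coupled variables $\lambda_1,\dots,\lambda_{N-2}$ to their minimal feasible values $\hat\lambda_j$ never destroys feasibility of the first row and does not change the objective, so that no optimality is forfeited. The monotone dependences encoded in the sign conditions $\delta_j>0$, $a_j>0$, $b_j>0$ are precisely what make the downward induction $\lambda_j\ge\hat\lambda_j$ valid and what justify the subsequent ``push $\lambda_{N-1}$ up until the first row binds'' argument; the remaining positivity and uniqueness claims are then routine consequences.
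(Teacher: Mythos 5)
Your proposal is correct and, at the top level, pursues the same goal as the paper's proof: since $\gamma_{m+1}-\omega>0$, the task is to maximize $\lambda_{N-1}$, and one shows that at the optimum all rows of $A\lambda\le\bar{b}$ are active. The execution, however, is genuinely different. The paper argues by contradiction on an arbitrary optimizer and splits into two cases: if the first row is inactive, it perturbs upward along an explicit, deliberately conservative direction (increments $\tilde{\varepsilon}\,\beta(1+\delta)^{N-2-i}/\delta^{N-1-i}$ with $\delta=-\max_i d_i$ and $\beta=\max_i b_i$), whose admissibility rests on the geometric identity \eqref{appendix_technical_corollary:1_eq2} from Corollary \ref{appendix_technical_corollary:1}; if some later row $k>1$ is inactive, it uses $\lambda_{k-1}>0$ to decrease $\lambda_{k-1}$, thereby creating slack in the first row, and reduces to the first case. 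You instead fix the optimal $\lambda_{N-1}$, build the componentwise-minimal feasible point $\hat\lambda$ by backward saturation, use the monotone domination $\lambda_j\ge\hat\lambda_j$ to see that $\hat\lambda$ is again optimal with rows $2,\dots,N-1$ active, and then push along the exact solution ray of the homogeneous subsystem until the first row binds. This buys you two things: you never need Lemma \ref{appendix_technical_lemma:1} or Corollary \ref{appendix_technical_corollary:1} (the exact ray replaces the conservative perturbation direction), and you verify explicitly that $\gamma_N-1>0$ under the hypothesis $\gamma_{m+1}>\omega$, a degenerate case which the paper's step ``the construction is feasible for $\lambda=0$'' silently assumes away.

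One small repair: your closing uniqueness claim is justified incorrectly. Invertibility of $A$ shows that the point satisfying $A\lambda=\bar{b}$ is unique; it does not by itself show that every optimizer has all rows active, which is what ``the optimal solution satisfies $A\lambda=\bar{b}$, $\lambda>0$'' requires. But you already have the missing ingredient: every optimizer $\lambda^*$ has the same last component $\lambda_{N-1}^*$ (the objective depends only on it, with nonzero coefficient), hence dominates $\hat\lambda$ componentwise by your induction; if $\lambda_j^*>\hat\lambda_j$ for some $j\le N-2$ then, since $a_j>0$ and the first row is active at $\hat\lambda$, the first row would be violated at $\lambda^*$. So $\lambda^*=\hat\lambda$, and the lemma holds for every optimizer. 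With that one line added, the proof is complete.
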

\begin{proof}
	$\gamma_{m+1} > \omega$ implies a negative coefficient of $\lambda_{N-1}$ in the objective function. Thus, the goal of Problem \ref{Alpha_Formula_Relaxed_Problem}, whose optimum is denoted by $\lambda^* = (\lambda_1^*, \ldots, \lambda_{N-1}*)$, consists of maximizing $\lambda_{N-1}$. Suppose that there exists at least one index $k \in \{1,\ldots,N-1\}$ such that $\sum_{n=1}^{N-1} A_{kn} \lambda_n^* < \bar{b}_{k}$ and deduce a contradiction. Let $k$ be equal to one, define the constants $\varepsilon := \gamma_N-1 - \sum_{i=1}^{N-2} a_i \lambda_i^* - \omega \lambda_{N-1}^* > 0$, $\delta := - \max_{i=1,\ldots,N-2} d_i > 0$, $\beta := \max_{i=1,\ldots,N-2} b_i$, and choose $\tilde{\varepsilon} > 0$ such that
	\begin{equation*}
		\tilde{\varepsilon} \left[ \omega + \beta \sum_{i=1}^{N-2} a_i \frac {(1+\delta)^{N-2-i}}{\delta^{N-1-i}} \right] \leq \varepsilon.
	\end{equation*}
	Then, we increase $\lambda_{N-1}$ by $\tilde{\varepsilon}$ and $\lambda_i,\, i=1,\ldots,N-2$, by $\tilde{\varepsilon}\, \beta (1+\delta)^{N-2-i} / \delta^{N-1-i}$.
		The choice of $\tilde{\varepsilon}$ ensures the validity of inequality one. Since inequality $j \in \{2,\ldots,N-1\}$ holds for $\lambda^*$ the following computation shows that all other constraints are satisfied for the above choice of $\lambda_i,\, i=1,\ldots,N-2$, which leads to a contradiction to the assumed optimality of $\lambda^*$,
		\begin{eqnarray*}
			& & d_{j-1} \tilde{\varepsilon} \beta \frac {(1+\delta)^{N-1-j}}{\delta^{N-j}} + \sum_{i=j}^{N-2} \tilde{\varepsilon} \beta \frac {(1+\delta)^{N-2-i}}{\delta^{N-1-i}} + \tilde{\varepsilon} b_{j-1} \\
			& \leq & \tilde{\varepsilon} \left[ -\delta \beta \frac {(1+\delta)^{N-1-j}}{\delta^{N-j}} + \sum_{i=j}^{N-2} \beta \frac {(1+\delta)^{N-2-i}}{\delta^{N-1-i}} + \beta \right] \\
			& = & \frac {\tilde{\varepsilon} \beta} {\delta^{N-1-j}} \left[ - (1+\delta)^{N-1-j} + \sum_{i=0}^{N-2-j} (1+\delta)^{N-2-j-i} \delta^i + \delta^{N-1-j} \right] \stackrel{\eqref{appendix_technical_corollary:1_eq2}} {=} 0.
		\end{eqnarray*}
	Thus, the first inequality holds with equality and $k > 1$ which implies $\lambda_{k-1} > 0$. This enables us to reduce $\lambda_{k-1}$ without violating the non-negativity condition imposed on this variable. As a consequence, the first inequality is not active and all other inequalities remain valid. Hence, repeating the above argumentation w.r.t. $k=1$ proves $A \lambda^* = \bar{b}$.
	
	Since the above construction is feasible for $\lambda = 0$, $\lambda_{N-1}>0$ holds in the optimum. Hence, inequality $i+1$ implies $\lambda_{i}> 0$ for $i = 1,\ldots,N-2$ which completes the proof.
\end{proof}

\subsection{Proof of Lemma \ref{symmetry_omega_exp}}
\label{AppendixProofSymmetry}

The purpose of this subsection is to prove the symmetry properties stated in Lemma \ref{symmetry_omega_exp} for $\omega > 1$ (the case $\omega = 1$ is covered by Corollary \ref{alpha_symm_cor}). To this end, we require the following technical lemma which is also an essential tool in proving monotonicity properties.
\begin{lemma}\label{lemma_polynomial}
	Let $p:\mathbb{R} \rightarrow \mathbb{R}$ be a polynomial of degree $k > 1$ such that all $k$ roots $z_1,\ldots,z_k$ are real, exactly one of them is negative, and at most one is equal to zero. In addition, let the root of $p^{(k-1)}(z)$ be strictly smaller than $-\frac ck$ with $c \geq 0$ and let $p(\tilde{z}) = \tilde{z}^{k-1} (\tilde{z}+c)$ for some $\tilde{z} > \max\{z_1,\ldots,z_k\}$. Then it follows $p(z) > z^{k-1}(z+c)$ for all $z > \tilde{z}$.
\end{lemma}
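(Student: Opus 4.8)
The plan is to set $q(z):=z^{k-1}(z+c)$ and $r:=p-q$, and to prove the statement by induction on the degree $k$. Writing $p$ as monic with roots $z_1,\dots,z_k$, the unique root of $p^{(k-1)}$ is the mean $\tfrac1k\sum_i z_i$, so the hypothesis ``the root of $p^{(k-1)}$ is $<-c/k$'' reads $\sum_i z_i<-c$. Comparing the coefficients of $z^{k-1}$ in $p$ and in $q=z^k+cz^{k-1}$, this says precisely that $r$ has leading coefficient $-\sum_i z_i-c>0$, hence $\deg r=k-1$ and $r(z)\to+\infty$; since $\tilde z$ exceeds every root of $p$ and every root of $q$ (the latter being $\le0$), we have $r(\tilde z)=0$ and must show $r>0$ on $(\tilde z,\infty)$. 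For the base case $k=2$, $r$ is affine with slope $-(z_1+z_2)-c>0$, so $r(\tilde z)=0$ already gives $r>0$ on $(\tilde z,\infty)$. For $k\ge 3$ I would reduce the claim to the derivative: because $r(\tilde z)=0$, for $z>\tilde z$ one has $p(z)-q(z)=\int_{\tilde z}^{z}(p'-q')$, so it suffices to prove $p'>q'$ on $(\tilde z,\infty)$.

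Put $\hat p:=p'/k$ and $\hat q:=q'/k$; a direct computation gives $\hat q(z)=z^{k-2}(z+c')$ with $c':=\tfrac{k-1}{k}c\ge 0$, i.e. the degree-$(k-1)$ model polynomial. The crux is to check that $\hat p$ satisfies the hypotheses of the lemma in degree $k-1$ (call this assertion $S(k-1)$, the induction hypothesis). By Rolle's theorem the roots of $\hat p=p'/k$ lie in the convex hull of $z_1,\dots,z_k$; the only root of $p'$ that can be $\le0$ is the critical point lying in the interval bounded by the single (simple) negative root $z_1$ and the next root, while all remaining roots of $\hat p$ are strictly positive and none is zero (the zero root of $p$ being simple does not survive differentiation). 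Since differentiation preserves the mean of the roots, the mean of the roots of $\hat p$ equals $\tfrac1k\sum_i z_i<-c/k=-c'/(k-1)<0$; together with the positivity of the other roots this forces the remaining root to be strictly negative. Thus $\hat p$ has exactly one negative root, no zero root and positive remaining roots, and $\hat p^{(k-2)}=p^{(k-1)}/k$ has the same root $<-c'/(k-1)$, so every hypothesis of $S(k-1)$ holds.

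It remains to produce a crossing point $\tilde w\le\tilde z$ of $\hat p$ and $\hat q$ at which to invoke $S(k-1)$. Let $\omega$ be the largest root of $\hat p$; for $k\ge 3$ one checks $\omega>0$, so $\hat p(\omega)-\hat q(\omega)=-\hat q(\omega)=-\omega^{k-2}(\omega+c')<0$. I claim $\hat p-\hat q\ge 0$ somewhere on $(\omega,\tilde z]$: otherwise $r'=k(\hat p-\hat q)<0$ on $[\omega,\tilde z]$, so $r$ is strictly decreasing there and $r(\omega)>r(\tilde z)=0$; but the largest root $z_k$ of $p$ satisfies $\omega\le z_k<\tilde z$ (Gauss--Lucas) and $r(z_k)=-q(z_k)=-z_k^{k-1}(z_k+c)<0$, contradicting the monotonicity at $z_k\in[\omega,\tilde z)$. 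By the intermediate value theorem $\hat p$ and $\hat q$ therefore cross at some $\tilde w\in(\omega,\tilde z]$ with $\tilde w>\omega=\max(\text{roots of }\hat p)$. Applying $S(k-1)$ to $\hat p,\hat q,\tilde w$ gives $\hat p>\hat q$ on $(\tilde w,\infty)\supseteq(\tilde z,\infty)$, i.e. $p'>q'$ on $(\tilde z,\infty)$, and integrating as above closes the induction.

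I expect the main obstacle to be the verification that $\hat p=p'/k$ inherits the exact sign pattern of its roots, and in particular that it has precisely one negative root: this is exactly the point where the derivative hypothesis on $p^{(k-1)}$ (equivalently $\sum_i z_i<-c$) is indispensable, since without it the critical point near $z_1$ could fail to be negative. The second delicate step is locating the crossing point $\tilde w$ to the left of $\tilde z$, which is forced above by the sign $r(z_k)<0$ at the largest root of $p$.
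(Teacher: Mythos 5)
Your proof is correct and follows essentially the same route as the paper's: induction on the degree, passing to the normalized derivative $p'/k$ with the rescaled constant $c'=\frac{k-1}{k}c$, checking the root-sign and mean-of-roots hypotheses for it, producing a crossing point $\tilde w\le\tilde z$ beyond the largest root of $p'/k$, applying the induction hypothesis there, and integrating $p'>q'$ back up from $\tilde z$. The only cosmetic difference is how the crossing point is found --- you argue by a monotonicity contradiction at the largest root of $p$, while the paper applies the mean value theorem to $p-q$ on $[\max_i z_i,\tilde z]$ and then the intermediate value theorem --- and your explicit verification that $p'/k$ has exactly one negative root spells out what the paper compresses into its brief remark about the mean value theorem.
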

\begin{proof} We prove the assertion via induction with respect to $k$. For $k=2$ the polynomial can be written as $p(z) = (z-a)(z+b) = z^2 + z(b-a) - ab$ with $b > 0$ and $a \geq 0$. Due to the stated assumptions, the root $(a-b)/2$ of the first derivative is strictly smaller than $- c/2$. Consequently, it holds $a + c < b$. From $p(\tilde{z}) = \tilde{z}^2 + c \tilde{z}$ we derive $\tilde{z}(b-a-c) - ab = 0$. Thus, for $z > \tilde{z}$ it holds $p(z) - z(z+c) = z (b-a-c) - ab > 0$ implying the assertion for $k=2$.
	We perform the induction step from $k$ to $k+1$. Let the degree of $p$ be $k+1$. Additionally, let $p$ satisfy all assumptions of Lemma \ref{lemma_polynomial} with $k+1$ instead of $k$. Note that this - under consideration of the mean value theorem - guarantees that all derivatives of $p$ have only strictly positive roots except for one negative root. Let $z_0:=\max\{z_1,\ldots,z_{k+1}\}$, i.e., $0<z_0<\tilde{z}$. Then we obtain $p(z_0)=0 < p(\tilde{z}) = \tilde{z}^{k}(\tilde{z}+c)$. Thus, there exists $\bar{z} \in ]z_0,\tilde{z}[: \frac 1 {k+1} p^\prime(\bar{z}) > \bar{z}^{k-1} (\bar{z} + \frac {kc}{k+1})$. Define $\tilde{p} := \frac 1 {k+1} p^\prime$. $\tilde{p}$ has degree $k>1$ and a maximal positive root $0 < z^* \leq z_0$. Thus, there exists $z^{**} \in ]z^*,\bar{z}[: \tilde{p}(z^{**}) = {z^{**}}^k$. Now we apply the induction assumption to $\tilde{p}$ and obtain $\tilde{p}(z) > z^{k-1}(z+\frac {kc}{k+1})$ for all $z>z^{**}$, i.e., $p^\prime(z) > (z^k (z + c))^\prime$ for $z > z^{**}$ which allows us to conclude the assertion.
\end{proof}

Let the $\mathcal{KL}_0$-function be of type \eqref{3:eq:exponential controllability}. Then Lemma \ref{symmetry_omega_exp} states that $\alpha^\omega_{N,N-m} - \alpha^\omega_{N,m} \geq 0$ for $m < N-m$, $m \in \mathbb{N}_{>0}$, $N \in \mathbb{N}_{\geq 3}$.
\begin{proof}
	Lemma \ref{suff_cond_exp} covers the case $\gamma_{m+1} - \omega \leq 0$. Hence, we assume $\gamma_{m+1} - \omega > 0$. Moreover, we suppose $\gamma_{N-m+1} - \omega > 0$ because otherwise $\alpha^\omega_{N,N-m} = 1$ holds and the assertion follows. Thus, we only have to deal with \eqref{alpha_formula} in order to establish the desired inequality. Choosing $N$ as small as possible for given $m \in \mathbb{N}_{\geq 1}$, i.e., $N = 2m + 1$ implies $N - m = m + 1$. Moreover, take the following equality into account
	\begin{equation}\label{symmetry_omega_exp_eq1}
		y_i = C \sum_{n=0}^{i-2} \sigma^n + \omega C \sigma^{i-1} = C \frac {1-\sigma^{i-1}+\omega \sigma^{i-1} - \omega \sigma^i} {1 - \sigma} = C \frac {1 - \eta \sigma^{i-1}} {1-\sigma}
	\end{equation}
	with $\eta := 1 + \sigma \omega - \omega$. Then, \eqref{symmetry_ineq} is equivalent to
	\begin{eqnarray}\label{symmetry_omega_exp_ineq1}
		0 & \leq & \left[ \gamma_{m+1} ( \gamma_{m+2} - \omega ) - ( \gamma_{m+1} - \omega ) ( \gamma_{m+1} - 1 ) \right] \prod_{i = m+2}^N (\gamma_{i}-1) + (\gamma_{m+1}-\gamma_{m+2}) \gamma_{m+1} \prod_{i = m+2}^N \gamma_i \nonumber \\
		\Longleftrightarrow 0 & \leq & \left[ \gamma_{m+1} (\gamma_{m+2}-\gamma_{m+1}) + (\gamma_{m+1}-\omega) \right] \prod_{i = m+2}^N \frac {\gamma_i - 1} {\gamma_i} - (\gamma_{m+2} - \gamma_{m+1}) \gamma_{m+1}.
	\end{eqnarray}
	If $\gamma_{m+1} - \gamma_{m+2} = - \sigma^m \eta \geq 0$, i.e., $\eta \leq 0$ this inequality holds because $\prod_{i=m+2}^N (\gamma_i - 1)/\gamma_i \in (0,1)$. Hence, we only have to deal with the case $\eta > 0$. We aim at using Lemma \ref{lemma_polynomial} to establish the inequality which has to be proven. For this purpose, we require the equalities
	\begin{equation*}
		\frac {\gamma_i - 1}{\gamma_i} \stackrel{\ref{symmetry_omega_exp_eq1}} = C^{-1} \left( C - \frac {1-\sigma} {1-\sigma^{i-1} \eta} \right) 
	\end{equation*}
	and
	\begin{eqnarray*}
		\frac {\gamma_{m+1}(\gamma_{m+2}-\gamma_{m+1})+(\gamma_{m+1}-\omega)} {\gamma_{m+1}(\gamma_{m+2}-\gamma_{m+1})} & = & C^{-2} \left( C^2 + \frac {C(\frac {1-\sigma^m \eta}{1-\sigma}) - \omega} {\sigma^m \eta \frac {1-\sigma^m \eta}{1-\sigma}} \right) \\
		& = & C^{-2} \left( C^2 + \frac C {\sigma^m \eta} - \frac {\omega (1-\sigma)} {\sigma^m \eta (1 - \sigma^m \eta)} \right) \\
		& = & C^{-2} \left( C + \frac 1 {2\sigma^m \eta} \pm \sqrt{ \left(\frac {1}{2\sigma^m \eta}\right)^2 + \frac {\omega (1-\omega)}{\sigma^m \eta (1-\sigma^m \eta)} }\right)
	\end{eqnarray*}
	Overall, we obtain for inequality \eqref{symmetry_omega_exp_ineq1}
	{\small\begin{equation*}
		\underbrace{\left( C + \frac {1}{2\sigma^m \eta} + \sqrt{ \ldots\phantom{\frac 12} } \right) \left( C + \frac 1 {2\sigma^m \eta} - \sqrt{ \ldots\phantom{\frac 12} } \right) \prod_{i=m+2}^N \left( C - \frac {1-\sigma} {1-\sigma^{i-1} \eta} \right)}_{=: p(C)} \geq C^2 C^{N-m-1} = \underbrace{C^{m+2}}_{=: q(C)}
	\end{equation*}}%
	where we have suppressed the argument of the roots. Since $\eta \in (0,1)$ the polynomial $p(C)$ has clearly $m+1$ strictly positive roots and exactly one negative root. We show that the positive root $-1/(2\sigma^m \eta) + \sqrt{ \ldots\phantom{\frac 12} }$ is located in the interval $(0,1)$, i.e.,
	\begin{equation*}
		\frac {\omega(1-\sigma)} {\sigma^m \eta (1-\sigma^m \eta)} < 1 + \frac {1}{\sigma^m \eta} \Longleftrightarrow 0 < \eta (1 - \sigma^m) + \sigma^m \eta (1-\sigma^m \eta).
	\end{equation*}
	Proposition \ref{exp_stab_C1} provides $p(1) = q(1)$, i.e., using the notation of Lemma \ref{lemma_polynomial} we set $\tilde{z} = 1$. Moreover, we calculate 
	the $(m+1)$-first derivative of $p(C)$:
	\begin{equation*}
		p^{(m+1)}(C) = (m+2)! C + (m+1)! \left( \frac {1}{\sigma^m \eta} - \sum_{i=m+2}^N \frac {1-\sigma}{1-\sigma^{i-1}\eta} \right).
	\end{equation*}
	This is a polynomial of degree one. To apply Lemma \ref{lemma_polynomial} it remains to show that its only root is negative. To determine the sign of its root it suffices to investigate the term
	\begin{eqnarray*}
		\frac {1}{\sigma^m \eta} - (1-\sigma) \sum_{i=m+2}^N \frac {1}{1-\sigma^{i-1}\eta} & > & \frac {1}{\sigma^m \eta} - (1-\sigma) \sum_{i=m+2}^N \frac {1}{1-\sigma^{m+1}\eta} \\
		& = & \frac {1 - \sigma^{m+1} \eta - m (1-\sigma) \sigma^m \eta}{\sigma^m \eta (1-\sigma^{m+1} \eta)} \\
		& > & \frac {(1-\sigma)}{\sigma^m (1-\sigma^{m+1} \eta)} \left( \sum_{i=0}^m \sigma^i - m \sigma^m \right) > 0.
	\end{eqnarray*}
	Hence, all assumptions of Lemma \ref{lemma_polynomial} are satisfied with $c=0$ which proves the assertion for $N = 2m+1$. This is the induction start. To complete the proof we have to perform the induction step, i.e., assuming the validity of the assertion for $N \geq 2m+1$ we have to show the correctness for $N+1$. Let \eqref{symmetry_ineq} hold. Using the induction assumption, we aim at proving
	{\begin{eqnarray}
		0 & \leq & \prod_{m+1}^{N+1-m} \gamma_i \prod_{N-m+2}^{N+1} (\gamma_i - 1) (\gamma_{N-m+2} - \omega) + (\gamma_{m+1} - \gamma_{N-m+2}) \prod_{m+1}^{N+1} \gamma_i \nonumber \\
		& & - (\gamma_{N+1} - 1)(\gamma_{m+1} - \gamma_{N-m+1}) \prod_{m+1}^N \gamma_i - (\gamma_{N+1}-1)(\gamma_{N-m+1}-\omega) \prod_{m+1}^{N-m} \prod_{N-m+1}^N (\gamma_i - 1) \nonumber \\
		\Longleftrightarrow 0 & \leq & \underbrace{ \left[ \gamma_{N+1} (\gamma_{m+1} - \gamma_{N-m+2}) - (\gamma_{N+1}-1)(\gamma_{m+1} - \gamma_{N-m+1}) \right]}_{= \gamma_{N+1}(\gamma_{N-m+1}-\gamma_{N-m+2}) + (\gamma_{m+1} - \gamma_{N-m+1}) } \prod_{N-m+1}^N \gamma_i \nonumber \\
		& & + \underbrace{\left[ \gamma_{N-m+1} (\gamma_{N-m+2} - \omega) - (\gamma_{N-m+1} - \omega) (\gamma_{N-m+1} - 1) \right]}_{= \gamma_{N-m+1} (\gamma_{N-m+2}-\gamma_{N-m+1}) + (\gamma_{N-m+1} - \omega) } \prod_{N-m+2}^{N+1} (\gamma_i - 1) \label{symmetry_omega_exp_ineq2} \\
 		\Longleftrightarrow 0 & \leq & - \left[ \gamma_{N-m+1} \gamma_{N+1} C \sigma^{N-m} \eta + \gamma_{N-m+1} C (\sigma^m - \sigma^{N-m}) \eta \right] \prod_{N-m+2}^N \gamma_i \nonumber \\
		& & + \left[ \gamma_{N-m+1} (\gamma_{N+1} - 1) C \sigma^{N-m} \eta + (\gamma_{N+1}-1)(\gamma_{N-m+1} - \omega) \right] \prod_{N-m+2}^N (\gamma_i - 1). \nonumber
	\end{eqnarray}}
	Thus, $\eta \leq 0$ guarantees the validity of the considered inequality. Hence, let $\eta > 0$. Note that $\eta = 1 - \omega(1-\sigma) < 1$. We rewrite \eqref{symmetry_omega_exp_ineq2} as follows
	{\footnotesize\begin{eqnarray*}
		& & - \left[ \frac {\gamma_{N+1}} {C} C \sigma^{N-m} \eta + (\sigma^m - \sigma^{N-m}) \eta \right] C C^m \prod_{i=N-m+1}^N \frac {\gamma_i}{C} + \left[ \gamma_{N-m+1} C \sigma^{N-m} \eta + (\gamma_{N-m+1} - \omega) \right] \prod_{N-m+2}^{N+1} (\gamma_{i} - 1) \geq 0 \\
		\Longleftrightarrow & & \underbrace{\left( \frac {\gamma_{N-m+1}} {\gamma_{N+1}} C^2 + \frac {\gamma_{N-m+1} - \omega}{\sigma^{N-m} \eta \gamma_{N+1}/C} \right) \prod_{i=N-m+1}^N \frac {\gamma_{i+1} - 1}{\gamma_i/C}}_{=: p(C)} \geq \underbrace{C^{m+1} \left(C + \frac {\sigma^m - \sigma^{N-m}}{\sigma^{N-m} \gamma_{N+1}/C} \right)}_{=:q(C)}.
	\end{eqnarray*}}
	Note that both polynomials have the coefficient one, i.e., are normed ($C^{m+2}$). Moreover, $q(C)$ has exactly one negative root located at $-(\sigma^m - \sigma^{N-m})(1-\sigma)/((1-\eta \sigma^{N})\sigma^{N-m})$. Next, we consider the term $(\gamma_{i+1}-1)/(\gamma_{i}/C)$ more closely. This leads to
	\begin{equation*}
		\frac {\gamma_{i+1}-1}{\gamma_i/C} = \frac {C \left( \frac {1-\sigma^i \eta}{1-\sigma} \right) - \frac {1-\sigma}{1-\sigma}} {\frac {1-\sigma^{i-1}\eta}{1-\sigma}} = \frac {1-\sigma^i \eta}{1-\sigma^{i-1}\eta} C - \frac {1-\sigma}{1-\sigma^{i-1}\eta}.
	\end{equation*}
	Thus, the polynomial $(\gamma_{i+1}-1)/(\gamma_{i}/C)$ has its root at $(1-\sigma)/(1-\sigma^i \eta)$, i.e., in the interval $(0,1)$. We aim at determining the roots of the term
	\begin{equation*}
		\frac {\gamma_{N-m+1}} {\gamma_{N+1}} C^2 + \frac {\gamma_{N-m+1}} {\gamma_{N+1}\sigma^{N-m}\eta} C - \frac \omega {\sigma^{N-m}\eta \gamma_{N+1}/C}.
	\end{equation*}
	To this end, we have to solve the equation
	\begin{equation*}
		C^2 + \frac 1 {\sigma^{N-m} \eta} C - \frac {\omega (1-\sigma)}{(1-\sigma^{N-m}\eta) \sigma^{N-m}\eta} \stackrel{!}{=} 0.
	\end{equation*}
	This leads to
	\begin{equation*}
		C = - \frac 1 {2 \sigma^{N-m} \eta} \pm \sqrt{ \left( \frac 1 {2 \sigma^{N-m} \eta} \right)^2 + \frac {\omega (1-\sigma)}{(1-\sigma^{N-m} \eta) \sigma^{N-m} \eta} } = \frac {-1 \pm \sqrt{ 1+\frac {4 \omega \sigma^{N-m} \eta (1-\sigma)}{1-\sigma^{N-m} \eta} }} {2 \sigma^{N-m} \eta}.
	\end{equation*}
	Thus, there is one negative and one positive root and $p(C)$ may be represented by $p(C) = \prod_{i=1}^{m+2} (C - z_i)$ where $z_i$ denote the determined roots. Show that the obtained positive root is also located in $(0,1)$, i.e.,
	\begin{eqnarray*}
		& & 1 + 4 \sigma^{N-m} \eta \omega(1-\sigma)/(1-\sigma^{N-m} \eta) < (1+2\sigma^{N-m} \eta)^2 = 1 + 4 \sigma^{N-m} \eta (1 + \sigma^{N-m} \eta) \\
		\Longleftrightarrow & & \omega (1-\sigma) < (1 + \sigma^{N-m} \eta)(1-\sigma^{N-m} \eta) = 1 - (\sigma^{N-m} \eta)^2 \\
		\Longleftrightarrow & & (\sigma^{2(N-m)} \eta) \eta < 1 - \omega (1-\sigma) = \eta.
	\end{eqnarray*}
	Calculate the $(m+1)^{st}$ derivative of $p(C)$ and $q(C)$
	{\scriptsize\begin{equation*}
		q^{(m+1)}(C) = (m+1)! \left( (m+2) C + \frac {(\sigma^m - \sigma^{N-m})(1-\sigma)} {(1-\sigma^{N} \eta) \sigma^{N-m}} \right), \quad p^{(m+1)}(C) = (m+1)! \left( (m+2) C + \frac 1 {\sigma^{N-m} \eta} - \sum_{i=N-m+1}^N \frac {1-\sigma}{1-\sigma^i \eta} \right).
	\end{equation*}}
	We show that the root of $p^{(m+1)}$ is strictly smaller than its counterpart of $q^{(m+1)}$, i.e., 
	\begin{equation*}
		\frac 1 {\sigma^{N-m} \eta} - \sum_{i=N-m+1}^N \frac {1-\sigma}{1-\sigma^i \eta} - \frac {(\sigma^m - \sigma^{N-m})(1-\sigma)}{(1-\sigma^N \eta) \sigma^{N-m}} > 0.
	\end{equation*}
	Thus, it suffices to show
	{\footnotesize\begin{eqnarray*}
		0 & \leq & \frac 1 {\sigma^{N-m}\eta} - \frac {m (1-\sigma)}{1-\sigma^{N-m+1}\eta} - \frac {(\sigma^m - \sigma^{N-m})(1-\sigma)}{(1-\sigma^N \eta) \sigma^{N-m}} \\
		\Longleftrightarrow 0 & \leq & (1-\sigma^{N} \eta) (1-\sigma^{N-m+1}\eta) - m (1-\sigma) \eta (1-\sigma^{N} \eta) \sigma^{N-m} - \eta (\sigma^m - \sigma^{N-m}) (1-\sigma) (1-\sigma^{N-m+1}\eta).
	\end{eqnarray*}}
	Using $(1-\sigma^N \eta) = (1-\sigma)(1-\sigma^N \eta)/(1-\sigma) = (1-\sigma) (\sum_{i=0}^{N-1} \sigma^i + \sigma^N \omega)$ provides
	{\scriptsize\begin{equation*}
		(1-\sigma^{N-m+1} \eta)(1-\eta)(\sigma^m - \sigma^{N-m}) + (1-\sigma^{N-m+1} \eta) \left( \sum_{i=0}^{m-1} \sigma^i + \sum_{i=m+1}^{N-1} \sigma^i + \sigma^{N-m} + \sigma^N \omega \right) - m \eta (1-\sigma^N \eta) \sigma^{N-m} \geq 0.
	\end{equation*}}
	Observe that the first summand is positive, $\eta < 1$ and $\sum_{i=0}^{m-1} \sigma^i > m \sigma^m$, and $\sum_{i=m+1}^{N-1} \sigma^i \geq (N-m-1) \sigma^{N-1} \geq m \sigma^{N-1}$. Hence, the following calculation establishes the desired inequality
	\begin{equation*}
		0 \leq (1-\sigma^{N-m+1} \eta) (\sigma^m + \sigma^{N-1}) - (1-\sigma^{N} \eta) \sigma^{N-m} \Longleftrightarrow 0 \leq \sigma^m - \sigma^{N-m} + \sigma^{N+1} - \sigma^{N+1} \eta.
	\end{equation*}
	In consideration of Proposition \ref{exp_stab_C1}, all assumptions of Lemma \ref{lemma_polynomial} are satisfied. This shows the assertion due to the definition of $p$ and $q$.
\end{proof}

\subsection{Proof of Lemma \ref{appendix_exp_monotonicity_induciton_start_lemma}}
\label{AppendixProofMonotonicity}

\begin{proof}
	Due to Lemma \ref{suff_cond_exp}, we restrict ourselves to the case $\gamma_{m+1} - \omega > 0$. Taking into accout $N-m = m+2$ we have $a = C \sigma^m \eta \omega (\gamma_{m+2}-1)$. Thus, \eqref{appendix_exp_monotonicity_inequality1} is equivalent to
	{\small\begin{equation*}
		(\gamma_{m+2} C \sigma^m \eta \omega (\gamma_{m+2}-1) + \omega (\gamma_{m+1}-\omega)(\gamma_{m+2}-1)) \prod_{i=m+3}^N (\gamma_i - 1) \geq C \sigma^m \eta \omega (\gamma_{m+2}-1) \gamma_{m+2} \prod_{i=m+3}^N \gamma_i.
	\end{equation*}}
	Using $C \sigma^m \eta \omega (\gamma_{m+2}-1) \gamma_{m+2} = C^2(\sigma^m \eta \omega (1-\sigma^{m+1} \eta)/(1-\sigma))$ yields
	\begin{equation*}
		\underbrace{\left( C^2 + \frac {1-\sigma^m \eta}{\sigma^m \eta (1-\sigma^{m+1} \eta)} C - \frac {\omega (1-\sigma)}{\sigma^m \eta (1-\sigma^{m+1} \eta)} \right) \prod_{i=m+3}^N \left( C - \frac {1-\sigma}{1-\sigma^i-1 \eta} \right)}_{=:p(C)} \geq \underbrace{C^{m+2}}_{=:q(C)}.
	\end{equation*}
	Clearly, the polynomial $\prod_{i=m+3}^N (C - (1-\sigma)/(1-\sigma^i-1 \eta)$ is of degree $m$ and decomposed in linear factors whose roots are located in the open interval $(0,1)$. The other factor of $p(C)$ can be represented as
	\begin{equation*}
		\left( C + \frac {1-\sigma^m \eta \pm \sqrt{ (1-\sigma^m \eta)^2 + 4 \omega (1-\sigma) \sigma^m \eta (1-\sigma^{m+1} \eta) }} {2\sigma^m \eta (1-\sigma^{m+1} \eta)} \right).
	\end{equation*}
	Hence, this polynomial has one strictly positive and one strictly negative root. We show that the positive root is located in $(0,1)$, i.e., is strictly less than one:
	\begin{eqnarray*}
		& & (1 - \sigma^m \eta + 2 \sigma^m \eta (1-\sigma^{m+1} \eta))^2 > (1-\sigma^m \eta)^2 + 4 \omega (1-\sigma) \sigma^m \eta (1-\sigma^{m+1} \eta) \\
		\Longleftrightarrow & & (2 \sigma^m \eta (1-\sigma^{m+1} \eta))^2 + 4 \eta^2 \sigma^m (1-\sigma^{m+1} \eta) (1-\sigma^m) > 0.
	\end{eqnarray*}
	We calculate the $(m+1)^{st}$-derivative of the polynomial $p(C)$ which has degree $(m+2)$
	\begin{equation*}
		p^{(m+1)}(C) = (m+2)! C + (m+1)! \left( \frac {1-\sigma^m \eta}{\eta \sigma^m (1-\sigma^{m+1}\eta)} - \sum_{i=m+3}^N \frac {1-\sigma}{1-\sigma^{i-1} \eta} \right).
	\end{equation*}
	We aim at proving that the root of $p^{(m+1)}(C)$ is strictly negative. To this purpose, it suffices to establish the following inequality
	\begin{equation*}
		\frac {1-\sigma^m \eta} {\eta \sigma^m (1-\sigma^{m+1} \eta)} - \sum_{i=m+3}^N \frac {1-\sigma}{1-\sigma^{i-1} \eta} > 0
	\end{equation*}
	which is in turn equivalent to
	\begin{equation*}
		\frac {1-\sigma^m \eta}{1-\sigma} = \sum_{i=0}^{m-1} \sigma^i + \omega > m \sigma^m > m \eta \sigma^m > \eta \sigma^m (1-\sigma^{m+1} \eta) \sum_{i=m+3}^N \frac 1 {1-\sigma^{i-1} \eta}.
	\end{equation*}
	Under consideration of Proposition \ref{exp_stab_C1} all assumption of Lemma \ref{lemma_polynomial} are satisfied. Thus, we conclude the assertion.
\end{proof}


\end{document}